\newcommand{\fb}{\mathfrak {b}}
\newcommand{\fg}{\mathfrak {g}}
\newcommand{\fh}{\mathfrak {h}}
\newcommand{\fn}{\mathfrak {n}}
\newcommand{\ft}{\mathfrak {t}}
\newcommand{\fa}{\mathfrak {a}}
\newcommand{\fk}{\mathfrak {k}}
\newcommand{\fl}{\mathfrak {l}}
\newcommand{\fp}{\mathfrak {p}}
\newcommand{\fq}{\mathfrak {q}}
\newcommand{\fo}{\mathfrak {o}}
\newcommand{\fs}{\mathfrak {s}}
\newcommand{\fu}{\mathfrak {u}}
\newcommand{\gz}{\mathfrak {g}_{\bar{0}}}
\newcommand{\ch}{\mathcal H}
\newcommand{\Lie}{\mbox{\rm Lie}}
\newcommand{\si}{\sigma}
\newcommand{\sk}{\bigskip}
\newcommand{\lra}{\longrightarrow}
\newcommand{\pa}{\partial}
\newcommand{\De}{\Delta}
\newcommand{\Dez}{\Delta_{\bar{0}}}
\newcommand{\al}{\alpha}
\newcommand{\be}{\beta}
\newcommand{\de}{\delta}
\newcommand{\ep}{\varepsilon}
\newcommand{\la}{\lambda}
\newcommand{\na}{\nabla}
\newcommand{\om}{\omega}
\newcommand{\iimath}{\jmath}
\newcommand{\sh}{\sharp}
\newcommand{\bsl}{\backslash}
\newcommand{\im}{{\rm Im}}
\newcommand{\ad}{{\rm ad}}
\newtheorem{proposition}{Proposition}[section]
\newtheorem{theorem}[proposition]{Theorem}
\newtheorem{definition}[proposition]{Definition}
\newtheorem{example}[proposition]{Example}
\newtheorem{remark}[proposition]{Remark}
\newcommand{\bl}{\mathbb {L}}
\newcommand{\bc}{\mathbb {C}}
\newcommand{\br}{\mathbb {R}}
\newcommand{\tiC}{C}
\newcommand{\lat}{\la}
\newcommand{\gss}{\fg_{\rm ss}^\si}
\newcommand{\fas}{\fa_\sigma}
\newcommand{\R}{{\mathbb {R}}}
\newcommand{\cH} {\mathcal{H}}
\newcommand{\cO} {\mathcal{O}}
\newcommand{\QQ} {\mathcal{Q}}
\newcommand{\RR} {\mathcal{R}}
\newcommand{\fso}{{\mathfrak s}{\mathfrak o}}
\newcommand{\fsp}{{\mathfrak s}{\mathfrak p}}
\newcommand{\Gz}{G_{\bar{0}}}
\newcommand{\omz}{\omega_{\bar{0}}}
\newcommand{\blz}{\bl_{\bar{0}}}
\newcommand{\Om}{\Omega}
\newcommand{\we}{\wedge}
\newcommand{\bp}{\bar{\partial}}
\newcommand{\ba}{{\bar{0}}}
\newcommand{\beq}{\begin{equation}}
\newcommand{\eeq}{\end{equation}}
\newcommand{\fag}{\fl}
\newcommand{\zero}{{\bar{0}}}
\newcommand{\one}{{\bar{1}}}
\newcommand{\Deo}{\Delta_{\bar{1}}}
\newcommand{\zR}{S}
\newcommand{\0}{{\bar{0}}}
\newcommand{\1}{{\bar{1}}}
\newcommand{\tf}{\widetilde{F}}
\newcommand{\xs}{(X_\si)_\0}
\begin{document}

{\large \bf Geometric quantization and
unitary

highest weight Harish-Chandra supermodules}

\bigskip

\bigskip

Meng-Kiat Chuah

Department of Mathematics

National Tsing Hua University

Hsinchu, Taiwan

{\tt chuah@math.nthu.edu.tw}

\bigskip

Rita Fioresi

FaBiT, University of Bologna

Bologna, Italy

{\tt rita.fioresi@unibo.it}

\sk

\bigskip

\noindent {\bf Abstract:}

Geometric quantization transforms a symplectic manifold with Lie group action
to a unitary representation.
In this article, we extend geometric quantization to the super setting.
We consider real forms of contragredient Lie supergroups with compact Cartan subgroups,
and study their actions on some pseudo-K\"ahler supermanifolds.
We construct their unitary
representations in terms of sections of some line bundles.
These unitary representations contain highest weight Harish-Chandra supermodules, whose
occurrences depend on the image of the moment map.
As a result, we construct a Gelfand model of highest weight Harish-Chandra supermodules.
We also perform symplectic reduction, and show that
quantization commutes with reduction.

\sk

\noindent {\bf Keywords:}
Contragredient Lie supergroup, highest weight supermodule,
pseudo-K\"ahler, geometric quantization,
moment map, unitary representation.

\sk

\noindent {\bf Mathematics Subject Classification:}
17B15, 17B20, 53D20, 53D50.


\newpage
\section{Introduction}
\setcounter{equation}{0}
Let $G$ be a real semisimple Lie group.
Geometric quantization \cite{ko} associates a $G$-invariant
symplectic manifold $X$
to a unitary $G$-representation $\cH$,
where $\cH$ consists of some sections of line bundles on $X$.
For $G$ with compact Cartan subgroup,
we apply geometric quantization to obtain a family of
discrete series representations in \cite{jfa}.
In view of great mathematical interests in supersymmetry
(see for instance \cite{de}\cite{ma}\cite{vsv2}
and references therein),
it is natural to extend the above result to the super setting.
In this article, we consider Berezin integration \cite[\S4.6]{vsv2}
on invariant measure of $X$
to provide an $L^2$-structure on $\ch$.
This leads to a super
unitary $G$-representation \cite{fg}
on a family of highest weight Harish-Chandra supermodules \cite{cfv}.
We also consider the notions of
moment map \cite[\S26]{gs} and symplectic reduction \cite{gs},
and study their effects on
the unitary representation.
We now explain these projects in more details.

In what follows we use the word
``ordinary'' for the theory of Lie algebras, Lie groups and their
representations, to make a distinction from the theory of
their super counterparts.
We shall discuss
many concepts such as pseudo-K\"ahler, geometric quantization,
unitary representation,
moment map, symplectic reduction, and it is understood
that they are handled in the super setting.
To avoid lengthy terminology,
we refrain from adding the word ``super''
such as super pseudo-K\"ahler and so on.

Let $L$ be a complex Lie supergroup, with $\fl=\Lie(L)$
a contragredient Lie superalgebra.
So $\fl$ is one of $\fs \fl(m,n)$, $B(m,n)$, $C(n)$, $D(m,n)$,
$D(2,1;\al)$, $F(4)$, $G(3)$
\cite[\S2]{ka}.
Let $\fg$ be a real form of $\fl$,
namely $\fg$ is a real subalgebra such that $\fg \oplus i \fg = \fl$ \cite{pa}.
We assume that
$\fg$ has a compact Cartan subalgebra $\ft$.
Let $\fa = i\ft$, so that $\fh=\ft + \fa$
is a Cartan subalgebra of $\fl$.
It leads to a root space decomposition
$\fl = \fh + \sum_\De \fl_\al$, where $\fl_\al$ is the root space
of $\al \in \De$.
By a choice of positive system $\De^+$,
we let $\fn$ consist of the positive root spaces.
Then $\fl = \fg + \fa + \fn$ is the Iwasawa decomposition.
We always let the uppercase Roman letters be the subgroups
for the subalgebras in lowercase Gothic letters,
so for instance $\fa = \Lie(A)$ and so on.

We are interested in the supermanifold
\[ X = GA . \]
We express the real supergroups by
\[
\mbox{super Harish-Chandra pair (SHCP) } \;
 G=(\Gz, \fg) \; , \; X=(\Gz A, \fg+\fa) ,
\]
where $\Gz$ and $\Gz A$ are the ordinary Lie groups with Lie algebras
$\gz$ and $\gz +\fa$ respectively \cite[Prop.7.4.15]{ccf}.
We will treat $X$ as a supermanifold
with symplectic structures and supersymmetries.
By the Iwasawa decomposition,
$GAN$ is an open subset of $L$, so
we can identify
$X$ with an open subset of the complex superspace
$L/N$
(see \cite{cfv}).
Hence $X$ is a complex supermanifold with $G \times H$-action
(namely left $G$ and right $H$) because $H$ normalizes $N$.

For convenience, we make the convention that
$\fh^*$ denotes the real subspace (\ref{keci})
of the dual space of $\fh$, so that $\fh^* \cong i \ft^* \cong \fa^*$.
See Remark \ref{ide}.

We fix a nondegenerate invariant super symmetric
bilinear form $B$ on $\fg$. It extends to $\fl$ by $\bc$-linearity,
and is unique up to multiplication by non-zero scalar \cite[Prop.2.5.5]{ka}.
For each $\al \in \De^+$, we have the coroot
\begin{equation}
h_\al \in \fa \;,\; B(h_\al, v) = \al(v)
\label{coroot}
\end{equation}
for all $v \in \fh$.
We define the regular elements
\[ \fa_{\rm reg}^* = \{\la \in \fa^* \;;\;
\la(h_\al) \neq 0 \mbox{ for all } \al \in \De^+\}. \]
Then $\fa_{\rm reg}^*$ is a disjoint union of open cones, known as Weyl chambers.

We shall discuss in Section 3 the $G \times T$-invariant differential forms on $X$.
In particular, a symplectic form is a closed nondegenerate 2-form,
and a pseudo-K\"ahler form is a symplectic form of type $(1,1)$
with respect to the complex structure of $X$.
We shall compute the moment map \cite[\S26]{gs}
of the right $T$-action on the pseudo-K\"ahler form,
\beq
 \Phi : X \lra i \ft^* .
 \label{mdf}
 \eeq
The original formula of moment map has image in $\ft^*$,
but we intend to relate it to
the integral weights $\la \in i \ft^*$ of $T$-action,
so we add a factor $i$ and obtain (\ref{mdf}).

We identify a $G \times T$-invariant function
on $X$ with $F \in C^\infty(A)$,
namely $F : A \lra \br$ is a smooth function.
By the exponential map, we identify $F$ with
$\tf \in C^\infty(\fa)$ by $F(e^v) = \tf(v)$.
In Remark \ref{ide}, we use the derivative of $\tf$ to
define the gradient map $F' : A \lra \fa^*$.
Also, we say that $F$ is nondegenerate
(resp. strictly convex)
if the Hessian matrix of $\tf$ is
nondegenerate
(resp. positive definite)
everywhere.
The assertion $\Phi(ga) = \frac{1}{2}F'(a)$ below makes use of
$i \ft^* \cong \fa^*$.

\begin{theorem}
Every $G \times T$-invariant closed (1,1)-form on $X$ can be expressed as
$\om = i \pa \bp F$, where $F \in C^\infty(A)$.
It is pseudo-K\"ahler if and only if $F$ is
nondegenerate and $\mbox{\rm Im}(F') \subset \fa_{\rm reg}^*$.
The right $T$-action has moment map
$\Phi(ga) = \frac{1}{2}F'(a)$.
\label{thm1}
\end{theorem}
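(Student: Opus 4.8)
The plan is to establish the three assertions in turn, reducing each to a computation in the frame adapted to the root decomposition $\fl = \fh + \sum_\De \fl_\al$ and to the slice $A$, and to isolate the super sign bookkeeping as the one delicate point.

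For the first assertion I would start from the fact that $X$ is an open subsupermanifold of the complex supermanifold $L/N$, so the super Dolbeault (i.e.\ $\pa\bp$-) lemma applies locally: any closed $(1,1)$-form $\om$ admits a local potential $\om = i\pa\bp\phi$, unique up to addition of a pluriharmonic function. The content is therefore to choose the potential $G \times T$-invariant. I would use that the $G \times T$-invariant smooth functions on $X = GA$ are exactly the functions of the $A$-coordinate, namely $C^\infty(A)$, since left $G$ and right $T$ act transitively on the $G$- and $T$-directions while fixing the $A$-slice. Averaging $\phi$ over the compact torus $T$ produces a $T$-invariant potential (averaging commutes with $\pa\bp$ and $\om$ is $T$-invariant); for $g \in G$ the difference $g^*\phi - \phi$ is pluriharmonic because $g^*\om = \om$, and normalizing this ambiguity at the base point yields a genuinely $G \times T$-invariant $F$, which must lie in $C^\infty(A)$. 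This gives $\om = i\pa\bp F$.

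For the second assertion I would evaluate $\om = i\pa\bp F$ at a point $ga$ in the frame coming from $\fa$ and the root spaces $\fl_\al$, $\al \in \De$. Since $\om$ is $T$-invariant it has $T$-weight zero, so it can only pair a weight space with its opposite; this forces the matrix of $\om$ to be block-diagonal, pairing the zero-weight $\fa$-slice directions among themselves and each $\fl_\al$ with $\fl_{-\al}$, while cross terms between $\fa$ and $\fl_\al$ vanish by weight. Along the $\fa$-slice, $i\pa\bp F$ restricts to the Hessian of $\tf$, so this block is nondegenerate exactly when $F$ is nondegenerate. On each pair $\fl_\al, \fl_{-\al}$ (with the odd roots $\al \in \Deo$ contributing with a Berezinian sign) the form evaluates to a multiple of $F'(a)(h_\al)$ through the coroot relation (\ref{coroot}); hence this block is nondegenerate precisely when $F'(a)(h_\al) \neq 0$ for every $\al \in \De^+$, i.e.\ $\im(F') \subset \fa_{\rm reg}^*$. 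Combining the two blocks yields the stated pseudo-K\"ahler criterion.

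For the moment map I would use the potential formula. Setting $d^c = \frac{i}{2}(\bp - \pa)$, so that $dd^c F = i\pa\bp F = \om$, and using that $F$ is $T$-invariant, i.e.\ $\cL_{\xi_X} F = 0$ for $\xi \in \ft$, Cartan's formula together with the holomorphicity of $\xi_X$ gives $\iota_{\xi_X}\om = -d(\iota_{\xi_X} d^c F)$, whence $\langle\Phi, \xi\rangle = -\iota_{\xi_X} d^c F = \frac{1}{2}\, dF(J\xi_X)$ for the complex structure $J$ of $X$. Since $J$ carries the right $\ft$-direction to the right $\fa = i\ft$-direction, $dF(J\xi_X)$ is the derivative of $F$ along $i\xi \in \fa$, equal to $\langle F'(a), i\xi\rangle$; through $\fa^* \cong i\ft^*$ this is the pairing of $\frac{1}{2}F'(a)$ with $\xi$, giving $\Phi(ga) = \frac{1}{2}F'(a)$. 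The main obstacle I anticipate is the super sign bookkeeping: verifying that the super $\pa\bp$-lemma and the block computation carry the correct Berezinian signs, so that the odd roots $\al \in \Deo$ contribute to nondegeneracy in the same regular-locus fashion as the even ones, and that the moment-map identity survives the odd conventions. A secondary delicate point is the $G$-invariance of the potential in the first step, since $G$ is noncompact and only the $T$-averaging is automatic, so the pluriharmonic ambiguity must be controlled across the full $G$-action.
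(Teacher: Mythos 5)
Your handling of the second and third assertions coincides in essence with the paper's own proof: the weight/block decomposition of $\om_a$ into an $\fh$-block and root-space blocks paired through coroots is exactly the computation (\ref{bntt})--(\ref{kka}), and your $d^c$-based formula for the moment map is Proposition \ref{iste-supe} applied to the invariant primitive $\be = \frac{1}{2}\sum_k \frac{\pa \tf}{\pa x_k}\, dy_k$ of (\ref{polii}), which is precisely $d^c F$. (One small imprecision there: the Hessian block does not pair ``the $\fa$-slice directions among themselves''; it pairs $\fa$ against $\ft$ inside $\fh$, both being Lagrangian for $\om|_{\fh}$.)

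The genuine gap is in the first assertion. Your argument tacitly assumes that a \emph{global} potential $\phi$ with $\om = i\pa\bp \phi$ exists -- this is the object you then average over $T$ -- but the local $\pa\bp$-lemma (itself not a standard citation in the super setting) only produces local potentials, and no patching argument is offered. Global exactness cannot come for free here: $X_\0 = G_\0 A$ is homotopy equivalent to a maximal compact subgroup $K_0$ of $G_\0$, and $H^2(K_0;\br)$ is nonzero whenever the center of $K_0$ has dimension at least two. For instance, for the real form $\fg = \fsl(2,\br)^3$ of $D(2,1;\al)$, which appears in the list (\ref{goodlist}), one has $K_0 = SO(2)^3$ and $H^2_{\rm dR}(X_\0) \cong \br^3$. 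So on $X$ a closed $2$-form need not be exact at all; exactness of $\om$ has to be extracted from its invariance, and no purely local argument can do that.

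Even granting a global $T$-invariant potential, your disposal of the $G$-ambiguity does not work: $g \mapsto g^*\phi - \phi$ is a $1$-cocycle of the noncompact group $G$ with values in the infinite-dimensional space of pluriharmonic functions, and ``normalizing at a base point'' does not trivialize a cocycle -- you would have to prove it is a coboundary, which for noncompact $G$ (no averaging available) is the same difficulty in different clothing. The paper avoids both problems by never leaving the finite-dimensional complex of invariant forms: it identifies $\Om^\bullet(GA)^{G\times T} \cong \Om^\bullet(A)\otimes \we_\ft^\bullet\, \fg^*$ in (\ref{niti}), proves $H^2(\we_\ft^\bullet\, \fg^*) = 0$ by an explicit coroot construction together with an induction on root height (Proposition \ref{zet}), and hence obtains an invariant primitive $\be \in C^\infty(A)\otimes\fh^*$ outright, which Proposition \ref{kam} then converts into $i\pa\bp F$ with $F \in C^\infty(A)$. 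That algebraic vanishing statement is the real content of the first assertion, and it is the step your outline is missing.
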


We shall use the pseudo-K\"ahler form to obtain
unitary realizations of an important class of $G$-representations,
known as Harish-Chandra highest weight supermodules $\Theta_{\la + \rho}$.
Let $\De_{\bar{0}}, \De_{\bar{1}}$ be the even and odd roots.
Let $\fk$ be the Lie subalgebra of $\fl_\0$ containing $\fh$, such that
$\fk \cap \fg_\0$ is a maximal compact subalgebra of $\fg_\0$.
We say that an even root is compact if it is a root of $\fk$,
and is noncompact otherwise.
We write $\De_{\bar{0}} = \De_c \cup \De_n$
for the compact and noncompact roots.
So for example $\De_c^+$ are the positive compact roots.
The coroots $h_\al$ of (\ref{coroot}) are used to define
\beq
 \widetilde{\tiC} = \{\la \in \fh^* \;;\;
\la(h_\al) \geq 0 \mbox{ for all } \al \in \De_c^+ \,,\,
\la(h_\be) < 0 \mbox{ for all } \be \in \De_n^+ \cup \De_{\bar{1}}^+ \} .
\label{hccone}
\eeq
We call $\widetilde{\tiC}$ the Harish-Chandra cone,
since for the ordinary setting, it is defined
in \cite[\S5]{hc-v} to study Harish-Chandra representations.
The irreducible highest weight supermodules $\Theta_{\la + \rho}$
are parametrized by integral weights $\la \in \fh^*$ in the set
\begin{equation}
C = \{\la \in \widetilde{\tiC}  \;;\;
(\la + \rho)(h_\al) < 0
\mbox{ for all } \al \in \Delta^+_n \cup \De_{\bar{1}}^+ \},
\label{vgood}
\end{equation}
where
\[ \rho = \frac{1}{2} (\sum_{\Dez^+} \al - \sum_{\Deo^+} \al) .\]
The condition in (\ref{vgood})
assures that $\Theta_{\la + \rho}$ is irreducible, and moreover
$\la$ is typical.
See also \cite[(1.7)]{jfa} for the ordinary setting.

In general $\tiC$ may be empty.
To avoid this, we recall the notion of admissible system.
Let $\fk = \fh + \sum_{\De_c} \fl_\al$ and
$\fq^\pm = \sum_{\De_n^\pm \cup \De_\1^\pm} \fl_\al$,
so that $\fl = \fk + \fq^+ + \fq^-$.
The positive system is called admissible if
\beq [\fk, \fq^\pm ] \subset \fq^\pm \;\;,\;\;
[\fq^\pm, \fq^\pm] = 0 .\label{howa}
\eeq
Then $\tiC \neq \emptyset$ if and only if the positive system
is admissible \cite[Thm.1.1]{cf}.
The real forms with admissible systems are
classified in \cite[Figs.1-6]{cf}.
We now assume that the positive system is admissible,
so that $\tiC \neq \emptyset$.

Fix a $G \times T$-invariant
pseudo-K\"ahler form $\om = i \pa \bp F$ on $X$, where $F$ is strictly convex.
We first construct a super line bundle $\bl$ on $X$.
In the ordinary setting,
the canonical line bundle $\bl_\ba$ over $X_\ba = G_\zero A$
has the property that the Chern class of $\bl_\ba$
is the cohomology class $[\om_\ba]$ \cite{ko}.
Here we extend $\bl_\ba$ to $\bl$, by fixing a global
splitting of the complex supergroup $X=GA$.

The line bundle $\bl_\ba$ has a connection $\na$
whose curvature is $\om_\ba$.
A section $s$ of $\bl_\ba$ is said to be
holomorphic if $\na_v s = 0$ for all anti-holomorphic vector fields $v$.
We extend the holomorphic structure to $\bl$, via the fixed global
splitting of $X$.
Let $\cH(\bl)$ denote the space of holomorphic sections on $\bl$.

For each integral weight $\la \in i \ft^*$, let
 $\cH(\bl)_{\lat}$ denote the holomorphic sections that
transform by $\lat$ under the right $T$-action (see (\ref{puku})).
Let $W(\bl)_{\lat}$ be the smallest SHCP representation of $(G_{\bar{0}},\fg)$
in $\cH(\bl)_{\lat}$
which contains the highest weight vector.
For $\la \in C$, as irreducible $G$-modules \cite[Thm.1]{cfv},
\[  \Theta_{\la + \rho} \cong W(\bl)_{\lat} .\]
Here we apply $\fh^* \cong i \ft^* \cong \fa^*$ as explained in
Remark \ref{ide}. By this isomorphism, we can regard $C$ as a subset of
$\fh^*$, $i \ft^*$ or $\fa^*$.
We assume that $\Theta_{\la + \rho}$ is unitarizable; see Proposition \ref{eeej}.

There is an invariant Hermitian structure
$\langle \cdot , \cdot \rangle$ on $\bl_\ba$,
which extends to $\bl$ canonically, if we require
orthogonality between odd and even subspaces \cite[Sec.1]{ka}.
Let $\mu_A$ be the positive Haar measure of $A$.
We assume that $G$ has a positive Haar supermeasure $\mu_G$
\cite{coulembier}\cite{all2}, i.e. $G$ has non-zero volume,
so that $\mu_X = \mu_G\, \mu_A$ is a positive measure of $X$
 for us to perform Berezin integration
\cite[\S4.6]{vsv2}.

Consider $\oplus_C W(\bl)_{\lat}$,
summed over the integral weights $\la$ in $C$ of (\ref{vgood}).
Let
\beq
 (s,t) = \int_X \langle s , t \rangle \, \mu_X \;\;,\;\;
 s,t \in \oplus_C W(\bl)_{\lat} .
 \label{conve}
 \eeq
We shall show that (\ref{conve}) is positive definite
 on the square integrable elements,
so we can define
\beq
W^2(\bl) = \mbox{completion of } \{s \in \oplus_C W(\bl)_{\lat} \;;\;
(s,s) \mbox{ converges} \} .
\label{convf}
\eeq
Then $W^2(\bl)$ is a super Hilbert space, and it
is a unitary $G \times T$-representation.
See (\ref{shil}), (\ref{unita}) and Proposition \ref{iaai}.

The next theorem
determines the occurrences of $\Theta_{\la + \rho}$ in $W^2(\bl)$.
Let $\mbox{Im}(\Phi)$
denote the image of the moment map $\Phi : X \lra i \ft^*$.
By Theorem \ref{thm1}, $\mbox{Im}(\Phi) \subset i \ft_{\rm reg}^*$,
so $\mbox{Im}(\Phi)$ is contained in an open Weyl chamber.
We choose $\om$ so that $\mbox{Im}(\Phi) \subset \widetilde{C}$
of (\ref{hccone}).

\begin{theorem}
Let $\om = i \partial \bar{\partial}F$ be a $G \times T$-invariant
pseudo-K\"ahler form on $X$, where $F$ is strictly convex.
Then we have a unitary $G \times T$-representation on the
super Hilbert space
$W^2(\bl) \cong \sum_{{\rm Im}(\Phi)} \Theta_{\la + \rho}$.
\label{thm2}
\end{theorem}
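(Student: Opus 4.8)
The plan is to reduce the statement to a convergence criterion for a single integral over $A$, governed by the Legendre transform of $\tf$. First I would record the facts already in hand: by Proposition \ref{iaai}, $W^2(\bl)$ is a super Hilbert space carrying a unitary $G \times T$-action, because the left $G$-action preserves both the invariant Hermitian structure $\langle \cdot,\cdot\rangle$ on $\bl$ and the measure $\mu_G$, while the right $T$-action on $W(\bl)_\lat$ is through the unitary character $\lat \in i\ft^*$; and for $\la \in C$ one has the $G$-module isomorphism $\Th_{\la+\rho} \cong W(\bl)_\lat$ of \cite[Thm.1]{cfv}. Since distinct characters of $T$ are orthogonal under the pairing (\ref{conve}), the spaces $W(\bl)_\lat$ are mutually orthogonal, and $W^2(\bl)$ is the Hilbert completion of the orthogonal sum of those $W(\bl)_\lat$ (for $\la \in C$) that contain a nonzero square-integrable vector. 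The entire content of the theorem is therefore to show that $W(\bl)_\lat$ survives in $W^2(\bl)$ precisely when $\la \in \mathrm{Im}(\Phi)$.

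Next I would fix $\la \in C$ and decide whether $W(\bl)_\lat$ contributes to $W^2(\bl)$. Because this module is irreducible and the pairing (\ref{conve}) is $G$-invariant, it embeds as a square-integrable summand if and only if its highest weight vector $s_\la$ has finite norm, so it suffices to test convergence of $(s_\la,s_\la)$. Using the global splitting $X = GA$, Fubini for $\mu_X = \mu_G\,\mu_A$, and the $G$-invariance of $\langle\cdot,\cdot\rangle$, the integral $\int_X \langle s_\la, s_\la\rangle\,\mu_X$ reduces to a finite, nonzero (by the assumed super-volume of $G$) multiple of an integral over $A$. The key point is the pointwise norm: the Hermitian metric on $\bl_\ba$ has curvature $\om = i\pa\bp F$, so its potential along $A$ is $\tf$, and the right $T$-transformation law of $s_\la$ contributes a linear exponent in $\la$. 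After the identifications $F(e^v) = \tf(v)$ and $i\ft^* \cong \fa^*$, I expect $\langle s_\la, s_\la\rangle(e^v)$ to reduce, on the bosonic part, to a constant multiple of $\exp\bigl(2\la(v) - \tf(v)\bigr)$ times a Berezinian factor coming from the odd directions.

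The convergence of $\int_\fa \exp\bigl(2\la(v) - \tf(v)\bigr)\,\mu_A$ is then a Laplace-type estimate. Since $\tf$ is strictly convex, the concave exponent $2\la(v) - \tf(v)$ is bounded above and tends to $-\infty$ in every direction if and only if its gradient vanishes somewhere, i.e. $F'(v) = 2\la$ for some $v$; by Theorem \ref{thm1} this reads $\Phi(e^v) = \la$. Hence the bosonic integral converges if and only if $\la \in \mathrm{Im}(\Phi)$, and when it does the maximum is attained at the unique critical $v$. For the super refinement, Berezin integration over the finitely many odd coordinates replaces the integrand by its top Berezin component, a polynomial in $v$ times the same weight $\exp\bigl(2\la(v) - \tf(v)\bigr)$; multiplication by a polynomial leaves the convergence of a strictly convex Laplace integral untouched, and the typicality of $\la$ built into the definition (\ref{vgood}) of $C$ guarantees that $W(\bl)_\lat$ is the full typical module, so that no cancellation spoils the estimate. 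Thus $(s_\la, s_\la) < \infty$ exactly when $\la \in \mathrm{Im}(\Phi)$.

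Assembling the pieces, summing over $\la \in C$ and keeping the convergent summands retains exactly the indices $\la \in C \cap \mathrm{Im}(\Phi)$; since $\mathrm{Im}(\Phi) \subset \widetilde C$ lies in a single open Weyl chamber, these are precisely the integral weights indexed by $\mathrm{Im}(\Phi)$, and each retained summand is $W(\bl)_\lat \cong \Th_{\la+\rho}$. Taking the Hilbert completion then yields the unitary $G \times T$-isomorphism $W^2(\bl) \cong \sum_{\mathrm{Im}(\Phi)} \Th_{\la+\rho}$. The step I expect to be the main obstacle is the exact identification of the pointwise super-norm $\langle s_\la, s_\la\rangle$ as a function on $A$ --- pinning down the clean exponent $\exp\bigl(2\la(v) - \tf(v)\bigr)$ from the curvature relation, the Hermitian structure on $\bl$, and the right $T$-weight --- together with the verification that the odd Berezin directions contribute only a polynomial factor and so never alter the convergence dichotomy.
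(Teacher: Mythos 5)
Your overall skeleton---trivialize $\bl$ by the invariant section, test square-integrability weight by weight, and decide convergence by a Laplace-type estimate---matches the paper's strategy, and your Legendre analysis of the $A$-direction is correct: for strictly convex $\tf$, the integral $\int_\fa \exp(2\la(v)-\tf(v))\,dv$ converges if and only if $2\la \in \im(d\tf)$, which by Theorem \ref{thm1} is the moment-map condition. The genuine gap is in the sentence where you claim that $\int_X\langle s_\la,s_\la\rangle\,\mu_X$ ``reduces to a finite, nonzero (by the assumed super-volume of $G$) multiple of an integral over $A$.'' The factorization itself is fine: since $f_\la$ is holomorphic and transforms by the character of $\la$ under the right $H$-action, one gets
$\int_X \langle s_\la,s_\la\rangle\,\mu_X = \bigl(\int_G |f_\la|^2\,\mu_G\bigr)\cdot\bigl(\int_A e^{2\la - F}\,\mu_A\bigr)$.
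The problem is the first factor: $|f_\la(g)|^2$ is a matrix coefficient, not a $G$-invariant function; $G_\0$ is a noncompact group of infinite Haar volume; and the existence of a positive Haar supermeasure on $G$ (a statement about the Berezin density) says nothing about convergence of $\int_{G_\0}|f_\la|^2\,\mu_{G_\0}$. Finiteness of that factor is precisely Harish-Chandra's square-integrability theorem for discrete-series matrix coefficients, and it is exactly where the hypothesis $\la \in C$ of (\ref{vgood}) --- $(\la+\rho)(h_\al)<0$ on noncompact and odd positive roots --- gets consumed. Your proposal never invokes this, so the ``if'' direction (that $\la \in \im(\Phi)$ forces $W(\bl)_\la$ to survive in $W^2(\bl)$) is not established.

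The paper does not re-prove this analytic fact; after passing to $W^2(e^{-F})$ via Proposition \ref{iaai}, it expands $f=\sum_I f_I\xi^I$, uses the Berezin-integration identity (\ref{adam}) to reduce square-integrability of $f$ to membership $f_I \in \cH^2(G_\0 A, e^{-F})$ of finitely many ordinary components, and then cites \cite[Thm.2]{jfa}, which already packages both the Harish-Chandra condition in the $G_\0$-direction and the Legendre condition $\la \in \im(\frac{1}{2}F')$ in the $A$-direction. This componentwise reduction also corrects a secondary inaccuracy in your proposal: the odd variables do not contribute ``only a polynomial factor.'' Berezin integration converts the super integral into a finite sum of ordinary integrals over $G_\0 A$ of components whose right $H$-weights are shifted (by weights of the odd coordinates), so the proper replacement for your polynomial heuristic is exactly the reduction (\ref{adam}) followed by the ordinary theorem applied to each component.
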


In Theorem \ref{thm2},
we identify $\fh^* \cong i\ft^*$ by Remark \ref{ide}.
Also, $\sum_{{\rm Im}(\Phi)}$ denotes the Hilbert space sum
over the integral weights $\la$ in $\mbox{Im}(\Phi)$.

Let us consider the consequences of Theorems \ref{thm1} and \ref{thm2}.
Let $C^\circ$ be the interior of $C$, given by
replacing $\geq$ with $>$ in (\ref{hccone}).
By Theorem \ref{thm1}, ${\rm Im}(\Phi) \subset C^\circ$.
So if $\la \in C \bsl C^\circ$, then $\la \not\in {\rm Im}(\Phi)$,
and by Theorem \ref{thm2}, $\Theta_{\la + \rho}$ cannot occur in $W^2(\bl)$.
To remedy this defect, we
generalize the construction of $X$.

Let $\Pi_c$ denote the compact simple roots.
Given $R \subset \Pi_c$, we define
\begin{equation}
\si = \{ \la \in C \;;\;
\la(h_\al) = 0 \mbox{ for all } \al \in R
\mbox{ and } \la(h_\al) > 0 \mbox{ for all } \al \in \Pi_c \bsl R\} .
\label{sel}
\end{equation}
We call $\si$ a cell. The subsets of $\Pi_c$ correspond to the cells,
and $C$ is a disjoint union of the cells.
Note that $R = \emptyset$ corresponds to the cell $\si = C^\circ$.

Fix a cell $\si$, and equivalently $R \subset \Pi_c$.
We define
\[ \fh_\si = \cap_R \ker(\al) \subset \fh ,\]
and by convention $\fh_\si = \fh$ if $R = \emptyset$.
Let $\ft_\si = \ft \cap \fh_\si$ and $\fa_\si = \fa \cap \fh_\si$.
We use the coadjoint action to define
$\fg^\si = \{ v \in \fg \;;\; \ad_v^* \si = 0\}$.
Its commutator subalgebra $\fg_{\rm ss}^\si = [\fg^\si, \fg^\si]$
is a semisimple Lie algebra. Let
\[ X_\si = G/G_{\rm ss}^\si \times A_\si .\]
It is a complex supermanifold (see (\ref{komp})) with $G \times H_\si$-action.
The special case $X_\si = GA$ corresponds to $\si=C^\circ$.

Let $\overline{R}$ be the roots which are nonnegative linear combinations of $R$.
Then
\[ (\fa_\si^*)_{\rm reg} =
 \{\la \in \fa_\si^* \;;\; \la(h_\al) \neq 0 \mbox{ for all } \al \in \De^+ \bsl \overline{R}\}.
 \]
This is a disjoint union of open cones in $\fa_\si^*$.
Similar to (\ref{paki}), we have $\fa_\si^* \cong i \ft_\si^*$, and we define
$(i\ft_\si^*)_{\rm reg}$ accordingly.
The following theorem generalizes Theorem \ref{thm1}.

\begin{theorem}
Every $G \times T_\si$-invariant closed (1,1)-form on $X_\si$ can be expressed as
$\om = i \pa \bp F$, where $F \in C^\infty(A_\si)$.
It is pseudo-K\"ahler if and only if $F$ is
nondegenerate and $\mbox{\rm Im}(F') \subset (\fa_\si^*)_{\rm reg}$.
The right $T_\si$-action has moment map $\Phi_\si : X_\si \lra i\ft_\si^*$ given by
$\Phi_\si(ga) = \frac{1}{2}F'(a)$.
\label{thm3}
\end{theorem}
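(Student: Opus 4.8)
The plan is to run the proof of Theorem~\ref{thm1} essentially verbatim, replacing throughout the data attached to $X=GA$ by the data attached to $X_\si = G/\gss \times A_\si$: the torus $T$ by $T_\si$, the vector group $A$ by $A_\si$, the positive system $\De^+$ by $\De^+ \bsl \overline{R}$, and the regular set $\fa_{\rm reg}^*$ by $(\fa_\si^*)_{\rm reg}$. The first task is to describe holomorphic coordinates on $X_\si$ compatible with its complex structure from (\ref{komp}). Since the coroot assignment $\al \mapsto h_\al$ of (\ref{coroot}) is linear, the condition $\la(h_\al)=0$ for $\al \in R$ defining a cell in (\ref{sel}) forces $\la(h_\al)=0$ for every $\al \in \overline{R}$, so that for generic $\la \in \si$ the coadjoint annihilator $\gs$ has complexified semisimple part with root system exactly $\overline{R} \cup (-\overline{R})$. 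Hence $X_\si$ embeds as an open subset of $L/Q_\si$, where $Q_\si$ is the complex subgroup with Lie algebra containing $\fn$ and the root spaces $\fl_{\pm\al}$ for $\al \in \overline{R}$; the transverse holomorphic directions are then parametrized by $\De^+ \bsl \overline{R}$ together with $A_\si$, and the $G \times T_\si$-action is the exact analogue of the $G \times T$-action on $X$.

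With these coordinates in hand, I would show that every $G \times T_\si$-invariant closed $(1,1)$-form $\om$ on $X_\si$ has a potential $\om = i\pa\bp F$ with $F \in C^\infty(A_\si)$. This follows from the same potential-theoretic argument as in Theorem~\ref{thm1}: such an $\om$ is $\pa\bp$-exact, and averaging a potential over $G \times T_\si$ yields an invariant potential, which by invariance depends only on the $A_\si$-variable and hence is an element of $C^\infty(A_\si)$. Identifying $F$ with $\tf \in C^\infty(\fa_\si)$ by the exponential map as in Remark~\ref{ide}, the gradient map $F' : A_\si \lra \fa_\si^*$ is defined exactly as before.

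The pseudo-K\"ahler condition is then read off from the coordinate expression of $i\pa\bp F$. The Hessian of $\tf$ governs the block of $\om$ along the $A_\si$-directions, so nondegeneracy there is precisely nondegeneracy of $F$; the remaining blocks, indexed by the surviving roots $\al \in \De^+ \bsl \overline{R}$, carry factors proportional to $F'(a)(h_\al)$, so nondegeneracy there is precisely $F'(a)(h_\al) \neq 0$ for all such $\al$, i.e. $\mbox{\rm Im}(F') \subset (\fa_\si^*)_{\rm reg}$. For the moment map, I would contract $\om$ with the fundamental vector field of each $\xi \in \ft_\si$ and integrate, exactly as in Theorem~\ref{thm1}; the identical computation, now restricted to the $\ft_\si$-directions and using $\fa_\si^* \cong i\ft_\si^*$, gives $\Phi_\si(ga) = \frac{1}{2}F'(a)$, with the factor $\frac{1}{2}$ appearing as in (\ref{mdf}).

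The main obstacle is the first step. Unlike $X=GA$, which embeds cleanly in $L/N$ through the Iwasawa decomposition, the space $X_\si = G/\gss \times A_\si$ requires controlling the complex structure on the quotient $G/\gss$ and confirming that the coadjoint annihilator $\gs$ contributes exactly the root system $\overline{R}\cup(-\overline{R})$, so that the transverse holomorphic directions are indexed precisely by $\De^+ \bsl \overline{R}$. Once the coordinates and the identification of invariant forms with $C^\infty(A_\si)$ are secured, the potential-theoretic, nondegeneracy, and moment-map computations are formally identical to those of Theorem~\ref{thm1} and reduce to bookkeeping with the restricted root set $\De^+ \bsl \overline{R}$.
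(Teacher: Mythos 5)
There is a genuine gap in your first and central step, the existence of an invariant potential. You assert that an invariant closed $(1,1)$-form on $X_\si$ is ``$\pa\bp$-exact'' and that ``averaging a potential over $G \times T_\si$ yields an invariant potential.'' Neither claim survives scrutiny. Averaging requires an invariant mean, hence (essentially) compactness of the group being averaged over; here $G$ is a noncompact Lie supergroup (its even part contains factors such as $\fsp(n,\br)$, see (\ref{goodlist})), so no such averaging operator exists, and it is in any case unclear what averaging over the odd directions would mean. Moreover, the $\pa\bp$-exactness itself is not free: $X_\si$ is a noncompact complex supermanifold, where no general $\pa\bp$-lemma is available. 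Note that even for Theorem \ref{thm1} the paper never averages; the potential is produced algebraically, by proving the vanishing $H^2(\we_\ft^\bullet \fg^*)=0$ of the complex of invariant forms (Proposition \ref{zet}) and then explicitly integrating the resulting invariant primitive on $A$ (Proposition \ref{kam}). So the ``same potential-theoretic argument as in Theorem \ref{thm1}'' that you invoke is not the argument the paper uses, and the substitute you propose does not work.

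The paper's actual route for Theorem \ref{thm3} also avoids redoing any cohomology on $X_\si$: it identifies the invariant forms as $\Om^{p,q}(X_\si)^{G \times T_\si} = C^\infty(A_\si) \otimes \wedge_{\ft_\si}^{p,q}(\fg+\fa_\si,\gss)^*$ in (\ref{emyy}), and then uses the fibration $L/N \lra L/(P,P)$, i.e.\ the pullback along $X = GA \lra X_\si$, to realize this space as a subspace of $C^\infty(A) \otimes \wedge_\ft^{p,q}(\fg+\fa)^*$ as in (\ref{thth}). Applying Theorem \ref{thm1} to the pulled-back form gives $\om = i\pa\bp F$ with $F \in C^\infty(A)$, and membership in the $A_R$-invariant subspace forces $F \in C^\infty(A_\si)$. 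If you prefer to argue intrinsically on $X_\si$, the correct repair is to prove the analogue of Proposition \ref{zet} for the relative complex $\wedge_{\ft_\si}^\bullet(\fg+\fa_\si,\gss)^*$ and rerun Proposition \ref{kam}; either way, a cohomological input is needed where you put the averaging. Your remaining steps, the identification of the complex structure via (\ref{komp}), the two nondegeneracy blocks indexed by $\fh_\si$ and by $\De^+ \bsl \overline{R}$, and the contraction computation giving $\Phi_\si(ga) = \frac{1}{2}F'(a)$, do agree with the paper's proof.
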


Fix a $G \times T_\si$-invariant pseudo-K\"ahler form $\om_\si = i \pa \bp F$ on $X_\si$,
where $F$ is strictly convex.
We similarly construct a super line bundle $\bl^\si$
over $X_\si$ with Hermitian structure. Let $\cH(\bl^\si)$ denote its holomorphic sections,
and we similarly define
$\cH(\bl^\si)_\la$ for integral weight $\la \in i\ft_\si^*$.
This is a $G$-module, and we let $W(\bl^\si)_\la$ be its irreducible submodule
which contains the highest weight vector. Consider
$\oplus_\si W(\bl^\si)_\la$,
summed over the integral weights $\la$ in $\si$.
There exists a $G \times H_\si$-invariant measure $\mu_{X_\si}$ on $X_\si$.
Similar to (\ref{conve}) and (\ref{convf}),
 we integrate
 to define
\[ (s,t) = \int_{X_\si} \langle s, t \rangle \, \mu_{X_\si} \;\;,\;\;
s,t \in \oplus_\si W(\bl^\si)_\la \]
and obtain the super Hilbert space
\[ W^2 (\bl^\si) = \mbox{completion of } \{ s \in \oplus_\si W(\bl^\si)_\la \;;\;
(s,s) \mbox{ converges}\} .\]

By Theorem \ref{thm3}, ${\rm Im}(\Phi_\si) \subset (i \ft_\si^*)_{\rm reg}$,
so ${\rm Im}(\Phi_\si)$ is contained in an open Weyl chamber of $i \ft_\si^*$.
We choose $\om$ so that $\mbox{Im}(\Phi_\si) \subset \widetilde{C}$
of (\ref{hccone}).
Theorem \ref{thm2} generalizes to the following.

\begin{theorem}
Let $\om = i \partial \bar{\partial}F$ be a $G \times T_\si$-invariant
pseudo-K\"ahler form on $X_\si$, where $F$ is strictly convex.
Then we have a unitary $G \times T_\si$-representation on the
super Hilbert space
$W^2(\bl^\si) \cong \sum_{{\rm Im}(\Phi_\si)} \Theta_{\la + \rho}$.
\label{thm4}
\end{theorem}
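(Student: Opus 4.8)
The plan is to run the proof of Theorem \ref{thm2} essentially verbatim, replacing the data $(X,T,A,C,\Phi,\bl)$ throughout by $(X_\si,T_\si,A_\si,\si,\Phi_\si,\bl^\si)$, with Theorem \ref{thm3} playing the role of Theorem \ref{thm1}; the case $\si=C^\circ$ is literally Theorem \ref{thm2}. All the structural ingredients transfer without change: Theorem \ref{thm3} supplies the pseudo-K\"ahler form $\om=i\pa\bp F$ and the moment map $\Phi_\si(ga)=\frac{1}{2}F'(a)$ on $X_\si$; the super line bundle $\bl^\si$ with its Hermitian and holomorphic structures is built from the canonical bundle over the even part together with the fixed global splitting exactly as $\bl$ was; and the identification $W(\bl^\si)_\la\cong\Theta_{\la+\rho}$ for integral $\la\in\si$ is supplied by \cite{cfv}. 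It therefore remains to establish two things: that the Berezin pairing is positive definite on the square-integrable elements, so that $W^2(\bl^\si)$ is a super Hilbert space carrying a unitary $G\times T_\si$-action; and that the surviving isotypic components are exactly those indexed by the integral weights in ${\rm Im}(\Phi_\si)$.

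First I would decompose $\cH(\bl^\si)=\bigoplus_\la\cH(\bl^\si)_\la$ into right $T_\si$-weight spaces. Since the Hermitian structure on $\bl^\si$ and the measure $\mu_{X_\si}$ are both $G\times H_\si$-invariant, the pairing $(\cdot,\cdot)$ is $T_\si$-invariant, so distinct weight spaces are orthogonal; positivity on each summand is inherited from the positive supermeasure $\mu_{X_\si}$ and the super-Hermitian structure exactly as in the construction preceding Theorem \ref{thm2}. On each irreducible $W(\bl^\si)_\la\cong\Theta_{\la+\rho}$ the invariance forces $(\cdot,\cdot)$ to be a scalar multiple of the canonical unitary form by Schur's lemma for SHCP representations. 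The whole question then collapses to a single scalar per weight: for which integral $\la\in\si$ does $(s_\la,s_\la)$ converge, where $s_\la$ is the highest weight vector?

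This convergence criterion is the heart of the matter and the step I expect to be the main obstacle. Passing to the global splitting $X_\si=(G/G_{\rm ss}^\si)\times A_\si$ and working in the trivialization in which the bundle metric has weight $e^{-\tf}$, the highest weight vector restricts on $A_\si\cong\fa_\si$ to $e^{\la}$, so its pointwise norm is $e^{2\la(v)-\tf(v)}$ up to a factor controlled along the remaining directions. Strict convexity of $\tf$ and a Laplace-type (Legendre-transform) analysis then show that $\int_{\fa_\si}e^{2\la(v)-\tf(v)}\,dv$ converges precisely when $2\la$ lies in the open image of the gradient $F'$, that is, when $\la\in\frac{1}{2}{\rm Im}(F')={\rm Im}(\Phi_\si)$ by Theorem \ref{thm3}. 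The genuinely new feature relative to Theorem \ref{thm2} is the nontrivial factor $G/G_{\rm ss}^\si$; since $R\subset\Pi_c$ consists of compact simple roots, $G_{\rm ss}^\si$ is a compact semisimple subgroup, so the integration over this factor (together with the odd Berezin integration) is controlled exactly as the $G$-direction in Theorem \ref{thm2} and, quotienting only by a compact group, does not weaken the convergence threshold. Verifying that this directional decay of the unitary highest weight module does not interfere with the radial $A_\si$-estimate is the delicate part.

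Finally I would assemble the pieces. By the previous step $W(\bl^\si)_\la$ is square-integrable if and only if $\la\in{\rm Im}(\Phi_\si)$. Theorem \ref{thm3} gives ${\rm Im}(\Phi_\si)\subset(i\ft_\si^*)_{\rm reg}$, and $\om$ is chosen so that ${\rm Im}(\Phi_\si)\subset\widetilde{C}$; hence every integral weight in ${\rm Im}(\Phi_\si)$ already lies in the cell $\si$ and contributes one orthogonal copy of $\Theta_{\la+\rho}$, while the integral weights of $\si$ outside ${\rm Im}(\Phi_\si)$ contribute none. Completing the resulting orthogonal Hilbert sum yields $W^2(\bl^\si)\cong\sum_{{\rm Im}(\Phi_\si)}\Theta_{\la+\rho}$, with the $G\times T_\si$-action unitary by construction, as claimed.
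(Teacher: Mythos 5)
Your proposal follows the same route as the paper's own proof: the paper proves Theorem \ref{thm4} by rerunning the proof of Theorem \ref{thm2} with $(X,T,A,C,\Phi,\bl)$ replaced by $(X_\si,T_\si,A_\si,\si,\Phi_\si,\bl^\si)$, using the trivializing invariant section $u^\si$ of (\ref{uumn}), the invariant measure $\mu_{X_\si}=\mu_{\xs}\,\mu_{G_\1}$ from Proposition \ref{ranc}(b) (whose existence rests on unimodularity of $G_{\rm ss}^\si$, via Proposition \ref{ffub} --- a point you assume silently), and the orthogonality/uniqueness/positivity bookkeeping packaged into Proposition \ref{nchu}; note that what you call Schur's lemma is proved there by a direct highest-weight-generation argument, with positivity coming from Jakobsen's unitarizability (Proposition \ref{eeej}) plus positivity on ordinary functions.

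The one place you genuinely deviate is the step you yourself flag as ``the delicate part,'' and as written it is a gap: you assert, but do not prove, that the decay of the highest weight module along the noncompact directions of $G_\0/G_{\rm ss}^\si$ does not disturb the Legendre-transform criterion on $\fa_\si$. The paper never re-derives this. Instead, after the Berezin integration identity (\ref{attg}) reduces square-integrability of $f=\sum_I f_I\xi^I$ to square-integrability of the finitely many even components $f_I$ on $\xs$ against $e^{-F}\mu_{\xs}$, it quotes the ordinary result \cite[Thm.2]{jfa}: the weight-$\la$ space $\cH^2(\xs,e^{-F})$ is nonzero precisely when $\la\in{\rm Im}(\tfrac12 F')$. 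That citation is exactly what absorbs the joint estimate over $(G_\0/G_{\rm ss}^\si)\times A_\si$ that your Laplace analysis on $\fa_\si$ alone cannot deliver. Your observation that $G_{\rm ss}^\si$ is compact is correct ($\overline{R}$ consists of compact roots, since $\fk$ is a subalgebra), but by itself it only explains why the ordinary theorem of \cite{jfa} applies verbatim in the cell case; it does not replace that theorem. So to complete your argument you should invoke \cite[Thm.2]{jfa} at that point --- exactly as the proof of Theorem \ref{thm2}, which you propose to mimic, already does --- or else supply the discrete-series square-integrability estimate yourself; as it stands, the analytic heart of the theorem is asserted rather than proved.
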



According to Gelfand \cite{gz},
a model of compact Lie group is a unitary representation
on a Hilbert space in which every irreducible representation
occurs once.
For non-compact Lie groups with compact Cartan subgroups,
this notion is generalized to models of
holomorphic discrete series representations \cite{hc-vi},
and is constructed via geometric quantization \cite{jfa}.
We now extend this construction to the super setting,
where we obtain a model of irreducible
highest weight Harish-Chandra supermodules.

Fix a cell $\si \subset C \subset \fa^*$.
Let $\{\la_j\}$ be a basis of $\fa_\si^*$ such that
$\si = \{\sum_j c_j \la_j \;;\; c_j > 0\}$.
Define
\begin{equation}
 {\tf}_\si : \fa_\si \lra \br \;,\;
{\tf}_\si (x) = \sum_j \exp (\la_j(x)) .
\label{bebas}
\end{equation}
This is related to $F_\si \in C^\infty(A_\si)$ by Remark \ref{ide}.
We apply geometric quantization
to $(X_\si, i \partial \bar{\partial} F_\si)$
and obtain $W^2(\bl^\si)$.
Let $\sum_{\{\si\}}$ denote the sum over the collection of all the cells $\si$.

\begin{theorem}
For each cell $\si$, let $F_\si \in C^\infty(A_\si)$ be given by (\ref{bebas}).
 Then $\sum_{\{\si\}} W^2(\bl^\si)$
is a unitary $G$-representation on a super Hilbert space
in which every irreducible highest weight Harish-Chandra supermodule occurs once.
\label{thm5}
\end{theorem}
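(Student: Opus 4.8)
The plan is to deduce Theorem \ref{thm5} from Theorem \ref{thm4} applied cell by cell, by showing that the particular potential (\ref{bebas}) is engineered so that the moment map of $X_\si$ fills out the \emph{entire} cell $\si$; the global statement then follows because $C$ is the disjoint union of its cells. Throughout I identify $\fa_\si^* \cong i\ft_\si^*$ as in Remark \ref{ide}, so that $\si$, ${\rm Im}(\Phi_\si)$ and the integral weights all live in the same space.

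First I would verify that $F_\si$ of (\ref{bebas}) satisfies the hypotheses of Theorems \ref{thm3} and \ref{thm4}. Writing $\tf_\si(x) = \sum_j \exp(\la_j(x))$ with $\{\la_j\}$ a basis of $\fa_\si^*$, the Hessian of $\tf_\si$ at $x$ is the symmetric bilinear form $\sum_j \exp(\la_j(x))\, \la_j \otimes \la_j$ on $\fa_\si$. Each coefficient $\exp(\la_j(x))$ is strictly positive and $\{\la_j\}$ spans $\fa_\si^*$, so this form is positive definite at every point; hence $F_\si$ is strictly convex, in particular nondegenerate.

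Next comes the key computation of the moment image. The gradient is $\tf_\si'(x) = \sum_j \exp(\la_j(x))\, \la_j$. Since $\{\la_j\}$ is a basis, $x \mapsto (\la_1(x), \dots, \la_k(x))$ is a linear isomorphism $\fa_\si \to \br^k$, so the coefficients $\exp(\la_j(x))$ independently range over all of $(0,\infty)^k$ as $x$ runs through $\fa_\si$. Therefore ${\rm Im}(F_\si') = \{\sum_j c_j \la_j \;;\; c_j > 0\} = \si$, the last equality being the defining property of the basis. By Theorem \ref{thm3} the moment map is $\Phi_\si(ga) = \frac12 F_\si'(a)$, and since $\si$ is an open cone stable under positive rescaling, ${\rm Im}(\Phi_\si) = \frac12 \si = \si$. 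As $\si \subset C \subset \widetilde{C}$ and $\si$ avoids the walls defining $(\fa_\si^*)_{\rm reg}$, Theorem \ref{thm4} applies and gives $W^2(\bl^\si) \cong \sum_{\la \in \si} \Theta_{\la+\rho}$, the sum being over integral weights $\la$ in $\si$, each occurring exactly once (distinct highest weights yield nonisomorphic supermodules).

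Finally I would assemble the cells into the Gelfand model. Any integral $\la \in C$ determines the unique subset $R = \{\al \in \Pi_c \;;\; \la(h_\al)=0\}$ of compact simple roots, placing $\la$ in exactly one cell $\si$; this is the disjoint-union statement accompanying (\ref{sel}). Taking the Hilbert-space sum over all cells and restricting the $G \times T_\si$-action to $G$ yields a super Hilbert space $\sum_{\{\si\}} W^2(\bl^\si) \cong \sum_{\{\si\}} \sum_{\la \in \si} \Theta_{\la+\rho} = \sum_{\la \in C} \Theta_{\la + \rho}$, on which $G$ acts unitarily since each summand is unitary by Theorem \ref{thm4}. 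As $\la$ ranges once over all of $C$, each irreducible highest weight Harish-Chandra supermodule appears exactly once. The delicate point, and the real content beyond bookkeeping, is the exact equality ${\rm Im}(\Phi_\si) = \si$: a general strictly convex potential only guarantees that the moment image is some open convex subset of a single Weyl chamber, whereas the specific exponential-sum potential (\ref{bebas}) is designed to make the image fill the entire open cell $\si$ and hence capture every integral weight in it. It is this exactness that upgrades the multiplicity-one-within-a-cell of Theorem \ref{thm4} to completeness over all of $C$.
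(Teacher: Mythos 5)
Your proposal is correct and follows essentially the same route as the paper: verify that the exponential-sum potential (\ref{bebas}) is strictly convex with ${\rm Im}(F_\si') = \si$ exactly, invoke Theorems \ref{thm3} and \ref{thm4} to get $W^2(\bl^\si) \cong \sum_{\la \in \si} \Theta_{\la+\rho}$, and then sum over the cells using the disjoint decomposition $C = \bigsqcup_{\{\si\}} \si$ to obtain $\sum_{\la \in C} \Theta_{\la+\rho}$. Your added details (the rank-one decomposition of the Hessian, the linear-isomorphism argument for surjectivity onto $(0,\infty)^k$, and the observation that $\tfrac{1}{2}\si = \si$ because $\si$ is a cone) are all implicit in the paper's computation (\ref{awat}) and make the same argument slightly more explicit.
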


For symplectic manifolds with Lie group actions,
symplectic reduction transforms them to symplectic manifolds
of lower dimensions \cite{mw}.
We now extend this process to the super setting.

Let $\om = i \partial \bar{\partial}F$
be a $G \times T_\si$-invariant pseudo-K\"ahler form on $X_\si$,
with moment map $\Phi_\si : X_\si \lra i\ft_\si^*$ of the right $T_\si$-action.
Let $\lat \in \mbox{Im}(\Phi_\si)$, and let
\[
\iimath : \Phi_\si^{-1}(\lat) \hookrightarrow X_\si
\]
be the natural inclusion.
There is a natural right $T_\si$-action on $\Phi_\si^{-1}(\lat)$,
and it leads to the quotient map
\[
 \pi : \Phi_\si^{-1}(\lat) \lra \Phi_\si^{-1}(\lat)/T_\si.
 \]

\begin{theorem}
There exists a discrete set $\Gamma \subset A_\si$ such that
$\Phi_\si^{-1}(\lat)/T_\si = G/G^\si \times \Gamma$.
It has a unique $G$-invariant pseudo-K\"ahler form $-id \lat$ such that
$\pi^* (-i d \lat) = \iimath^* \om$.
\label{thm6}
\end{theorem}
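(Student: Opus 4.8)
The plan is to realize Theorem \ref{thm6} as a super analogue of the Marsden--Weinstein reduction \cite{mw}\cite{gs}, exploiting the fact that the moment map computed in Theorem \ref{thm3} depends only on the $A_\si$-coordinate. First I would substitute the explicit formula $\Phi_\si(ga) = \frac{1}{2}F'(a)$ into the definition of the level set. Since this expression is independent of $g \in G/G_{\rm ss}^\si$, the level set splits as
\[ \Phi_\si^{-1}(\lat) = G/G_{\rm ss}^\si \times \Gamma, \qquad \Gamma = \{a \in A_\si \;;\; F'(a) = 2\lat\}. \]
Writing $F(e^v)=\tf(v)$, the set $\Gamma$ is the solution set of $\nabla \tf = 2\lat$; because $F$ is strictly convex, $\nabla \tf$ is strictly monotone, hence injective, so $\Gamma$ is discrete (a single point when $\exp:\fa_\si \to A_\si$ is injective). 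This produces the discrete set $\Gamma$ of the statement together with the factor $G/G_{\rm ss}^\si$ of the level set.

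Next I would analyze the right $T_\si$-action on $\Phi_\si^{-1}(\lat)$. Using the model $X_\si \subset L/N$ and the commutativity of $H_\si = T_\si A_\si$, the right action of $t \in T_\si$ carries $(g G_{\rm ss}^\si, a)$ to $(gt\, G_{\rm ss}^\si, a)$; this is well defined because $T_\si$ normalizes $G_{\rm ss}^\si$ (as $\ft_\si \subset \fg^\si$ and $\fg_{\rm ss}^\si$ is an ideal of $\fg^\si$), and it fixes the discrete factor $\Gamma$. Hence the quotient is $(G/G_{\rm ss}^\si)/T_\si \times \Gamma$. The key Lie-theoretic step is the identity $G^\si = G_{\rm ss}^\si \, T_\si$: the stabilizer $\fg^\si$ is reductive with $\fg^\si = \fg_{\rm ss}^\si \oplus \fz(\fg^\si)$, and $\ft_\si$ surjects onto the reductive quotient $\fg^\si/\fg_{\rm ss}^\si$, so $T_\si$ together with $G_{\rm ss}^\si$ generates $G^\si$. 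Therefore $(G/G_{\rm ss}^\si)/T_\si = G/G^\si$ and $\Phi_\si^{-1}(\lat)/T_\si = G/G^\si \times \Gamma$. Since $\lat \in {\rm Im}(\Phi_\si) \subset (i\ft_\si^*)_{\rm reg}$, its coadjoint stabilizer equals $\fg^\si$, so each copy of $G/G^\si$ is the coadjoint orbit through $\lat$.

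For the reduced form, the Marsden--Weinstein theorem \cite{mw}\cite{gs} provides a unique $2$-form $\om_\lat$ on the quotient with $\pi^*\om_\lat = \iimath^*\om$, the uniqueness being immediate since $\pi$ is a surjective submersion and $\pi^*$ is injective on forms. Because the left $G$-action commutes with the right $T_\si$-action, $\om_\lat$ is $G$-invariant; a $G$-invariant symplectic form on the coadjoint orbit $G/G^\si$ is the Kirillov--Kostant--Souriau form, and tracking the residual $G$-moment map through the reduction (it evaluates to $\lat$ at the base point) pins down the constant, giving $\om_\lat = -i\,d\lat$. Finally the pseudo-K\"ahler property descends: the right $T_\si$-action is holomorphic and its complexification furnishes the complex structure on $G/G^\si$, so reduction of the type $(1,1)$ form $\om$ stays type $(1,1)$, and nondegeneracy follows from the regularity of $\lat$, which makes the orbit form nondegenerate.

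The main obstacle I expect is not the body computation, which is classical, but carrying the construction through the super category: one must check that $\Phi_\si^{-1}(\lat)$ and its $T_\si$-quotient are genuine supermanifolds (SHCPs), that the odd directions $\fg_\1/\fg_\1^\si$ descend compatibly with the splitting fixed in the construction of $\bl^\si$, and that the reduced form remains nondegenerate in the odd directions. Equivalently, one must verify that the Marsden--Weinstein argument is functorial with respect to the $\Z_2$-grading; once the even reduction and the identity $G^\si = G_{\rm ss}^\si T_\si$ are in place, the odd part is governed by the induced representation of $G^\si$ and should follow formally, but it is precisely here that the super bookkeeping and the exact determination of the constant in $-i\,d\lat$ must be handled with care.
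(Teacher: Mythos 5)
Your overall architecture (split the level set using $\Phi_\si(ga)=\frac{1}{2}F'(a)$, identify $G^\si = G_{\rm ss}^\si T_\si$, descend $\iimath^*\om$ along $\pi$, identify the constant) matches the paper's, but there is a genuine gap at the crux: the existence of the reduced form. You invoke the Marsden--Weinstein theorem as a black box to produce $\om_{\lat}$ with $\pi^*\om_{\lat} = \iimath^*\om$, and then concede in your final paragraph that verifying Marsden--Weinstein ``functorially with respect to the $\Z_2$-grading'' is the main unresolved obstacle. That concession is exactly the content the theorem requires in the super setting; a proof that defers it proves only the ordinary case. The paper never needs a super Marsden--Weinstein theorem, because it works entirely inside the finite-dimensional spaces of relative Lie superalgebra cochains: by the convention (\ref{motiva}), $G$-invariant forms on $G/U$ \emph{are} elements of $\we^n(\fg,\fu)^*$, so descent from $G/G_{\rm ss}^\si$ to $G/G^\si$ is pure linear algebra. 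Concretely, the paper first computes $\iimath^*\om = \frac{1}{2}\sum_k \frac{\pa \tf}{\pa x_k}\, d\xi_k$ in (\ref{tati}) (using that $\iimath$ meets $A_\si$ in the discrete set $\Gamma$, so the $\fh_\si$-block of (\ref{shiq}) pulls back to zero), then verifies by hand in (\ref{harap})--(\ref{kkc}) that $\imath(u)(\iimath^*\om) = \ad_u^*(\iimath^*\om) = 0$ for all $u \in \ft$; combined with $G$-invariance this puts $\iimath^*\om$ in $\we^2(\fg,\fg^\si)^* \cong \Om^2(G/G^\si)^G$, which is precisely the statement that it descends. This algebraic mechanism is what makes the super case go through, and it is absent from your argument.

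Two secondary points. First, Theorem \ref{thm6} does not assume $F$ strictly convex (only Theorem \ref{thm7} does), so your monotonicity argument for discreteness of $\Gamma$ uses an unavailable hypothesis; the correct argument, which the paper gives, is that nondegeneracy of the Hessian (guaranteed by Theorem \ref{thm3}) makes $F'$ a local diffeomorphism, whence fibers of $\Phi_\si|_{A_\si}$ are discrete. Second, your identification $\om_{\lat} = -i\,d\lat$ by ``tracking the residual $G$-moment map'' through the Kirillov--Kostant--Souriau form is too vague to pin down the sign and factor of $i$; the paper gets it in one line, (\ref{baz}), by comparing the explicit cochain $\iimath^*\om = \frac{1}{2}\sum_k \frac{\pa \tf}{\pa x_k}(a)\, d\xi_k$ with the moment map formula $\lat = \Phi_\si(a) = \frac{i}{2}\sum_k \frac{\pa \tf}{\pa x_k}(a)\, \xi_k$ from Theorem \ref{thm3}. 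The same explicitness resolves the pseudo-K\"ahler claim: the holomorphic fibration $\rho : G/G_{\rm ss}^\si \times A_\si \lra G/G^\si$ induced by $L/(P,P) \lra L/P$ satisfies $\rho^*\om_{\lat} = \om|_{V_\si}$, which is a nondegenerate $(1,1)$-form (nondegeneracy in both even and odd root directions coming from $\lat \in (i\ft_\si^*)_{\rm reg}$), rather than your appeal to a complexified $T_\si$-action furnishing the complex structure.
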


In Theorem \ref{thm6}, $d$ is the exterior derivative
\cite[p.234]{cw}, and
we shall explain in (\ref{baz})
that $-id \lat$ can be regarded as a $G$-invariant 2-form on $G/G^\si$.
Write
\[ (X_\si)_{\lat} = \Phi_\si^{-1}(\lat)/T_\si \;\;,\;\;
\om_{\lat} = -i d\lat .\]
The process
\[ (X_\si, \om) \leadsto ((X_\si)_{\lat} , \om_{\lat}) \]
is called symplectic reduction with respect to $\lat$,
and $((X_\si)_{\lat}, \om_{\lat})$ is called the symplectic quotient.

The concept of ``quantization commutes with reduction''
is proposed by Guillemin and Sternberg \cite{gs2},
often fondly written as $[\QQ,\RR]=0$.
See \cite{sj} for its later developments.
We now show that it holds in our setting.

Let us write
$\QQ(X_\si, \om) = W^2(\bl^\si)$
to emphasize that we obtain $W^2(\bl^\si)$
from $(X_\si, \om)$ via geometric quantization.
Let $\QQ(X_\si, \om)_{\lat}$ denote the elements of $\QQ(X_\si, \om)$
that transform by $\lat$ under the right $T_\si$-action.

We similarly apply geometric quantization to $((X_\si)_{\lat}, \om_{\lat})$,
and obtain a unitary $G$-representation
$\QQ((X_\si)_{\lat}, \om_{\lat})$.
The following theorem says that applying symplectic reduction followed by
geometric quantization is equivalent to geometric quantization
followed by taking subrepresentation.

\begin{theorem}
Let $\om = i \pa \bp F$ be a $G \times T_\si$-invariant pseudo-K\"ahler form
on $X_\si$, where $F$ is strictly convex.
As unitary $G$-representations,
$\QQ ((X_\si)_{\lat}, \om_{\lat}) \cong \QQ(X_\si, \om)_{\lat}$.
\label{thm7}
\end{theorem}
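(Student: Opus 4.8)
The plan is to realize the isomorphism concretely through a reduction map on sections, and then to pin down both sides as the single irreducible module $\Theta_{\lat+\rho}$. First I would define a reduction map
$\Xi : \QQ(X_\si,\om)_{\lat} \lra \QQ((X_\si)_{\lat},\om_{\lat})$ as follows. A holomorphic section $s$ of $\bl^\si$ lying in $\QQ(X_\si,\om)_{\lat}$ transforms by $\lat$ under the right $T_\si$-action, so its restriction $\iimath^* s$ to the level set $\Phi_\si^{-1}(\lat)$ is $T_\si$-equivariant of weight $\lat$ and therefore descends along $\pi$ to a section $\Xi(s)$ of the line bundle $\bl_{\lat}$ on $(X_\si)_{\lat}$ obtained by reducing $\bl^\si$. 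The curvature identity $\pi^*(-id\lat)=\iimath^*\om$ of Theorem \ref{thm6} guarantees that $\bl_{\lat}$ is exactly the quantizing bundle of $\om_{\lat}$ and that its Hermitian connection is the descent of the one on $\bl^\si$. Since $\om_{\lat}$ is of type $(1,1)$ (Theorem \ref{thm6}), the descended $\bp$-operator is compatible, so $\Xi$ sends holomorphic sections to holomorphic sections; $G$-equivariance of $\Xi$ is immediate, as the left $G$-action commutes with restriction to $\Phi_\si^{-1}(\lat)$ and with the quotient $\pi$.

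Next I would identify the two representations as abstract $G$-modules. On the source side, Theorem \ref{thm4} gives $W^2(\bl^\si)\cong\sum_{{\rm Im}(\Phi_\si)}\Theta_{\la+\rho}$, and since each summand $W(\bl^\si)_\la\cong\Theta_{\la+\rho}$ lies in a single right $T_\si$-weight space, the $\lat$-isotypic part is $\QQ(X_\si,\om)_{\lat}\cong\Theta_{\lat+\rho}$ (and is zero unless $\lat\in{\rm Im}(\Phi_\si)$). On the target side, the reduced space is $G/G^\si\times\Gamma$ with $\Gamma$ discrete (Theorem \ref{thm6}); geometric quantization of the $G$-invariant pseudo-K\"ahler manifold $G/G^\si$ is a Borel--Weil type construction, and by the realization of \cite{cfv} its holomorphic sections form the irreducible highest weight supermodule $\Theta_{\lat+\rho}$. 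Here one must check that the discrete factor $\Gamma$, together with the square-integrability requirement, selects a single component so that no spurious multiplicity appears. Granting this, $\Xi$ is a nonzero $G$-equivariant map between two copies of the irreducible $\Theta_{\lat+\rho}$, hence an isomorphism of $G$-modules.

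Finally I would upgrade this to a unitary isomorphism. By irreducibility, the two invariant Hermitian forms differ by a positive scalar, so it suffices to establish the super Fubini formula
\[
\int_{X_\si}\langle s,t\rangle\,\mu_{X_\si}
 = c_{\lat}\int_{(X_\si)_{\lat}}\langle\Xi(s),\Xi(t)\rangle\,\mu_{(X_\si)_{\lat}},
 \qquad s,t\in\QQ(X_\si,\om)_{\lat},
\]
with $c_{\lat}>0$, and then renormalize $\Xi$ by $c_{\lat}^{-1/2}$. The main obstacle is precisely this last step: one must integrate the Berezin supermeasure $\mu_{X_\si}=\mu_G\,\mu_{A_\si}$ first along the $T_\si$-fibres of $\Phi_\si^{-1}(\lat)\lra(X_\si)_{\lat}$ and then transversally in the $\fa_\si$-direction, showing that the odd (Berezin) part factors compatibly through the reduction and that the transverse integral converges. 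The strict convexity of $F$ is what makes this work: the positive-definite Hessian of $\tf$ controls the transverse integral as a convergent Gaussian-type integral and yields the positive constant $c_{\lat}$. Once the factorization of the supermeasure and this convergence are in hand, the unitarity and $G$-equivariance of $\Xi$ give $\QQ((X_\si)_{\lat},\om_{\lat})\cong\QQ(X_\si,\om)_{\lat}$ as unitary $G$-representations.
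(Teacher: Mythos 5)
Your core strategy---pinning down both sides as the single irreducible module $\Theta_{\lat+\rho}$ and then invoking irreducibility---is exactly the engine of the paper's proof, and your treatment of the source side via Theorem \ref{thm4} is verbatim what the paper does. The difference is the extra geometric apparatus you build around it. The paper never constructs a reduction map $\Xi$: it quantizes the reduced space directly, by extending Harish-Chandra's discrete series bundle $\bl_\0^\la = G_\0 \times_\la \bc$ over $G_\0/G^\si$ to a super line bundle $\bl^\la$ over $G/G^\si$ (using the global splitting $\ch(G/G^\si)\cong\ch(G_\0/G^\si)\otimes\we(\xi)$), showing the square-integrable part $W^2(\bl^\la)$ is nonzero because the ordinary discrete series $\ch^2(\bl_\0^\la)$ is nonzero \cite{hc-vi}, and concluding $W^2(\bl^\la)=W(\bl^\la)\cong\Theta_{\lat+\rho}$ since $W(\bl^\la)$ is irreducible by \cite[Thm.4.2.7]{cfv}. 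Your worry about the discrete factor $\Gamma$ producing spurious multiplicity is resolved there by strict convexity: $F'$ is injective, so $\Gamma$ is a single point and the reduced space is just $G/G^\si$. Once both sides are identified with $\Theta_{\lat+\rho}$, unitary equivalence is automatic from the uniqueness, up to positive scalar, of invariant Hermitian forms on an irreducible module (the paper's steps (\ref{pdu})(d) and (\ref{pdv})(d)), so no intertwining map ever needs to be exhibited geometrically.

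This is also where your write-up has genuine gaps, although they sit precisely in the steps the paper shows to be unnecessary. First, your Schur argument needs $\Xi\neq 0$, which you never verify: you would have to argue that a nonzero section in $\QQ(X_\si,\om)_{\lat}$ cannot vanish identically on the level set $G/G_{\rm ss}^\si\times\{a\}$, and moreover that its descent lands in the distinguished irreducible submodule $W(\bl^\la)$ and is square integrable; none of this is addressed. Second, your final step is misrouted: having observed that the two invariant Hermitian forms on an irreducible module differ by a positive scalar, you are already done---renormalizing any $G$-isomorphism by that scalar yields a unitary equivalence. The ``super Fubini'' factorization of $\mu_{X_\si}$ along the $T_\si$-fibres, which you single out as the main obstacle and leave unproven, is therefore not needed at all; dropping it (and the map $\Xi$ along with it) collapses your argument onto the paper's proof.
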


We end this Introduction by stating two conventions.

\begin{remark}
\label{ide}
Conventions on $\fh^*$ and $C^\infty(A)$.

{\rm For convenience,
we define
\beq
\fh^* = \{\la : \fh \lra \bc  \;;\; \la \mbox{ is $\bc$-linear, }
\la(\ft) \subset i \br \mbox{ and } \la(\fa) \subset \br \}.
\label{keci}
\eeq
So $\fh^*$ is a real subspace of the dual space of $\fh$.
By restricting the elements of $\fh^*$ to $\ft$ and $\fa$ respectively,
we have an isomorphism
\beq
\fh^* \cong i \ft^* \cong \fa^* .\label{paki}
\eeq
We let $\la$ denote an integral weight of $\fh^*, i \ft^*, \fa^*$ in various contexts,
and they are all related by (\ref{paki}).
For example,
when Theorem \ref{thm1} writes about $\Phi(ga) = \frac{1}{2} F'(a)$,
it makes use of (\ref{paki}) because
$\Phi(ga) \in i \ft^*$ and $\frac{1}{2} F'(a) \in \fa^*$.

Let $x=(x_1,...,x_n)$ be some linear coordinates on $\fa$.
Given $H \in C^\infty(\fa)$, we can define
$dH = (\frac{\pa H}{\pa x_i}dx_i) : \fa \lra \fa^*$,
and this is independent of $x$.
The Hessian matrix $(\frac{\pa^2 H}{\pa x_i \pa x_j})$
depends on $x$, but its nondegenerate
or positive definite property is independent of $x$.

Let $F \in C^\infty(A)$. By the exponential map $\fa \cong A$,
so $F$ uniquely determines $\tf \in C^\infty(\fa)$ by $\tf(v) = F(e^v)$
for all $v \in \fa$.
We define the gradient mapping $F' : A \lra \fa^*$ by
$F'(e^v) = (dH)(v)$.
We say that $F$ is nondegenerate (resp. strictly convex) if
the Hessian matrix of $\tf$
is nondegenerate (resp. positive definite) everywhere.
}
\end{remark}

\sk

The sections in this article are arranged as follows.
In Section 2, we construct an $L^2$-structure on the holomorphic functions by Berezin integration,
and obtain a unitary representation on some highest weight supermodules.
In Section 3, we study the pseudo-K\"ahler forms and prove Theorem \ref{thm1}.
In Section 4, we perform geometric quantization and prove Theorem \ref{thm2}.
In Section 5, we generalize the above results to the setting
with cells, and prove Theorems \ref{thm3} and \ref{thm4}.
Consequently in Section 6, we construct the Gelfand model
and prove Theorem \ref{thm5}.
In Section 7, we study symplectic reduction and prove Theorems \ref{thm6} and \ref{thm7}.

\bigskip

\noindent {\bf Acknowledgements}. The authors thank Prof.
P. Grassi, J. Huerta, M. A. Lledo, V. Serganova and Dr. C. A. Cremonini,
S. D. Kwok, S. Noja for helpful discussions.
The first author is partially supported by the National Science and Technology Council
of Taiwan.
The second author is partially supported by
research grants CaLISTA CA 21109 and CaLIGOLA MSCA-2021-SE-
01-101086123.


\newpage
\section{Unitary supermodules}
\setcounter{equation}{0}

In this section, we construct an invariant
$L^2$-structure on the holomorphic functions
of the complex supermanifold $X = GA$.
As a result, we obtain unitary realizations
of highest weight Harish-Chandra supermodules of $G$.

We say that $u \in \fg$ is homogeneous if it belongs to $\fg_r$
for $r \in \{\0,\1\}$. In that case we let $|u| =r$ denote its parity.
A super Hermitian metric on a super vector space $V$ is a map
$H : V \times V \lra \bc$ which is linear (resp. anti-linear)
on the first (resp. second) entry, such that for all
non-zero homogeneous vectors $u,v$,
\beq
  \begin{array}{cl}
    \mbox{(a)}  &  H(u,v) = 0  \mbox{\ \  if  $ \, |u| \neq |v| \, $}
    \;\; \mbox{(consistent)} , \\
    \mbox{(b)}  &  H(u,v) = (-1)^{|u| \cdot |v|} \overline{H(v,u)} \;\;
\mbox{(super Hermitian symmetric)} , \\
    \mbox{(c)}  &  H(u,u) \in \br^+ \,,\, H(v,v) \in i \br^+
    \mbox{ for } u \in V_\0 \,,\, v \in V_\1 \;\; \mbox{(super positive)}.
  \end{array}
\label{shil}
\eeq
See \cite[\S4.1]{fg}.
Then $H|_{V_\0} \oplus (-iH)|_{V_\1}$ is an ordinary inner product on $V = V_\0 \oplus V_\1$.
We say that $V$ is a super Hilbert space if it is complete with respect to
this inner product.

We say that a representation of a Lie superalgebra $\fg$ on a super Hilbert space $V$
is \textit{unitary} if:
\begin{equation}
( X \cdot v,w ) + (-1)^{|X||v|}( v,X \cdot w ) =0,
\qquad X \in \fg, \quad v,w \in V
\label{unita}
\end{equation}
\cite[p.101]{vsv2}\cite[Sec.3]{cfv2}.
This is equivalent to conditions (U1), (U2) of
\cite[Sec.1]{cfv2}, as specified in Sec. 3 of the same work.
We say that a representation of the supergroup $G=(G_\zero,\fg)$ is
\textit{unitary}, if it is a unitary representation of the ordinary group
$G_\zero$ and a unitary representation for the superalgebra $\fg$, together
with compatibility conditions expressed in \cite{ccf}\cite{cfv2}.

Since we consider only unitarizable
highest weight supermodules, let us first
obtain a list of all real forms with such modules.
The list (\ref{goodlist}) indicates $\fl$ and $\gz$,
which uniquely determines the real form $\fg$ of $\fl$
\cite[Prop.5.3.2]{ka}.
Here $\fg_{2,c}$ denotes the compact real form of $G_2$.
\beq
\begin{array}{l}
\underline{A(m,n): } \;
\fs \fu(p,m-p|n) \\
\underline{B(m,n): } \;
\fs \fo(2m+1) \oplus \fs \fp(n,\br) \\
\underline{C(n): } \;
\fs \fp(n-1,\br) \oplus \fs \fo(2) \\
\underline{D(m,n): } \;
\fs \fo(2m) \oplus \fs \fp(n,\br) \;,\;
\fs \fo(2m-2,2) \oplus \fs \fp(n,\br) \;,\;
\fs \fo(2m)^* \oplus \fs \fp(n,\br) \\
\underline{F(4): } \;
\fs \fo(2,5) + \fs \fu(2) \\
\underline{G(3): } \;
\fg_{2,c} + \fs \fl(2,\br) \\
\underline{D(2,1;\al): } \;
\fs \fu(2)^2 \oplus \fs \fl(2,\br) \;,\;
 \fs \fl(2,\br)^3
\end{array}
\label{goodlist}
\eeq

\begin{proposition}
Let $\la \in C$.
The highest weight supermodule $\Theta_{\la + \rho}$ of $\fg$ is unitarizable
if and only if $\fg$ belongs to (\ref{goodlist})
(apart from exceptions in $F(4)$ and $G(3)$, see Remark \ref{apoo}).
\label{eeej}
\end{proposition}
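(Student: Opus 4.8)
The plan is to reduce unitarizability of the irreducible module $\Theta_{\la+\rho}$ to a positivity statement and then to run that positivity test against the admissible real forms. The conjugation defining $\fg$, together with the compact Cartan $\ft$, determines a super $*$-operation on $\fl$; with respect to it there is (up to scale) a unique invariant contravariant super Hermitian form $\langle\cdot,\cdot\rangle$ on $\Theta_{\la+\rho}$, normalized so that the highest weight vector has positive norm. By \cite[\S4.1]{fg} and \cite[Sec.3]{cfv2}, invariance of this form is exactly the unitarity requirement (\ref{unita}), and parts (a),(b) of (\ref{shil}) hold by construction. Hence $\Theta_{\la+\rho}$ is unitarizable if and only if $\langle\cdot,\cdot\rangle$ satisfies the super positivity condition (c) of (\ref{shil}) on every weight space; this is the single condition the whole proof must control.

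Next I would localize the positivity test. Using the triangular decomposition $\fl = \fk + \fq^+ + \fq^-$ and a PBW basis of $U(\fq^-)$, the form becomes orthogonal across distinct $\fk$-isotypic components of $\Theta_{\la+\rho}$. Since $\fk \subset \fl_\0$ and $\fk \cap \fg$ is compact, every finite-dimensional $\fk$-type is automatically unitary, so no constraint arises from the compact roots $\De_c^+$. The only obstructions come from the noncompact generators: each even root in $\De_n^+$ spans, with its conjugate and coroot, a copy of $\fsl(2,\br)$, while each odd root in $\De_\1^+$ generates a rank-one super subalgebra (of type $\fsl(1|1)$, or its $B(0,1)$ analogue when $2\al$ is a root). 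Admissibility (\ref{howa}), which forces $[\fq^\pm,\fq^\pm]=0$, keeps the odd root strings of length one, so positivity along any single such root reduces to one Shapovalov factor, linear in $\la+\rho$, with no higher-order cancellation.

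The third step is the classification itself. Restricting $\Theta_{\la+\rho}$ to $\gz$ realizes it as a sum of unitarizable highest weight $\gz$-modules, so by the ordinary Hermitian symmetric theory \cite[\S5]{hc-v}\cite{jfa} the noncompact simple factors of $\gz$ must be of Hermitian type, i.e.\ among $\fsu(p,q)$, $\fsp(n,\br)$, $\fso(2,k)$, $\fso^*(2m)$. Matching these against the even parts of $A(m,n),\dots,G(3)$ — and using that $\gz$ determines the real form $\fg$ uniquely \cite[Prop.5.3.2]{ka} — singles out exactly the entries of (\ref{goodlist}) and excludes every other real form, giving the ``only if'' direction. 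For ``if'', I would verify that for each listed $\fg$ and each $\la\in C$ the finitely many odd Shapovalov factors attached to $\De_\1^+$ are sign-compatible with the even positivity: the defining inequalities $(\la+\rho)(h_\al)<0$ for $\al\in\De_n^+\cup\De_\1^+$ in (\ref{vgood}), together with typicality, pin down the sign of each factor for the classical families $A,B,C,D$, so the contravariant form stays super positive throughout.

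The main obstacle will be the exceptional superalgebras $F(4)$ and $G(3)$. Although their even parts $\fso(2,5)+\fsu(2)$ and $\fg_{2,c}+\fsl(2,\br)$ are Hermitian compatible, their odd root systems are too intricate for the uniform length-one argument to force positivity for \emph{all} $\la\in C$. Here one must compute the relevant odd Shapovalov factors explicitly and track the interplay between typicality (which secures irreducibility) and the sign of $\langle\cdot,\cdot\rangle$; this is precisely where positivity fails for a finite set of weights, producing the exceptions recorded in Remark \ref{apoo}. Isolating that finite set, rather than the general argument, is the delicate part of the proof.
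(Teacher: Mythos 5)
There is a genuine gap, and it sits in your ``only if'' direction. Your exclusion criterion --- restrict $\Theta_{\la+\rho}$ to $\gz$ and demand that the noncompact simple factors of $\gz$ be of Hermitian type --- is vacuous in this setting, so it cannot single out the list (\ref{goodlist}). The standing hypothesis of the paper is that the positive system is admissible, i.e.\ (\ref{howa}) holds; restricting (\ref{howa}) to the even part gives exactly the Harish-Chandra condition that every noncompact simple factor of $\gz$ is of Hermitian type (this is why the real forms with admissible systems are called Hermitian real forms in \cite{cf}). Consequently \emph{every} real form under consideration passes your test, including forms that must be excluded: for instance $\fsu(p,q|r,s)$ with $r,s>0$, or $\fso(2,2m-1)\oplus\fsp(n,\br)$ for $B(m,n)$, have admissible systems and Hermitian even parts but do not appear in (\ref{goodlist}), and their modules $\Theta_{\la+\rho}$ with $\la\in C$ are not unitarizable. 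The actual obstruction is not visible from $\gz$ alone: it is the sign of the contravariant form on vectors generated by the odd root spaces, which depends on how the real structure of $\fg$ acts on $\go$ (compare \cite[Thm.6.2.1]{ns}), i.e.\ on which real form of $\fl$ one has, not merely on the isomorphism type of $\gz$.

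A second, related weakness is the claim that admissibility reduces super positivity to ``one Shapovalov factor, linear in $\la+\rho$, with no higher-order cancellation'' per noncompact or odd root. Positivity of the contravariant form on the whole of $\Theta_{\la+\rho}$ is a statement about all PBW monomials in $U(\fq^-)$ simultaneously, and controlling it is precisely the hard content of the classification of unitarizable highest weight (super)modules; sign checks along individual root directions give necessary conditions only. This is why the paper does not argue from scratch at all: it invokes Jakobsen's complete classification \cite{jakobsen} and verifies case by case that for $\fg$ in (\ref{goodlist}) the inequalities (\ref{hccone}) and (\ref{vgood}) place $\Theta_{\la+\rho}$ inside Jakobsen's list (e.g.\ via \cite[Prop.7.4]{jakobsen} for $\mathfrak{osp}(2m+1|n)$), with the $F(4)$ and $G(3)$ exceptions emerging from that comparison. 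Your instinct that $F(4)$ and $G(3)$ are the delicate cases is correct, but to repair the proof you would either need to import Jakobsen's results as the paper does, or supply a genuine positivity argument (EHW/Jakobsen-type, or an analytic realization) for the ``if'' direction and an odd-direction sign obstruction, sensitive to the real form on $\go$, for the ``only if'' direction.
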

\begin{proof}
In \cite{jakobsen}, Jakobsen provides a complete list of all the unitarizable
highest weight modules for basic classical Lie superalgebras.
We check case by case that, for all $G=(G_\0,\fg)$ with $\gz$ in (\ref{goodlist}),
our condition $\la \in C$ of (\ref{vgood})
ensures that $\Theta_{\la + \rho}$ belongs to Jakobsen's list.

We demonstrate our computation by an example,
$\fg = \mathfrak{osp}(2m+1|n)$, $m,n>0$, with $\fg_\0=\fso(2m+1)\oplus \fsp(n,\R)$.
Unlike the general positive systems in \cite{jakobsen},
we consider only the admissible positive system (\ref{howa}).

Following the notation in \cite[Ch.7]{jakobsen}, we write
for the highest weight
\[
\Lambda=(\mu_1, \dots, \mu_m,\la, \la-a_2, \dots,\la-a_n), \qquad
a_n\geq \dots\geq a_2\geq 0,\quad \mu_1 \geq \dots \geq \mu_n \geq 0 .
\]
We can see that (\ref{hccone}) is satisfied, in fact the
parameters $\mu_i$, $a_j$ are defined to make such
necessary condition true. For example
$\Lambda(\ep_1-\ep_2)=\mu_1- \mu_2\geq 0$.
We write
\[
\Lambda+\rho=(\mu_1+m-1/2, \dots, \mu_m+1/2, \lambda+n-m-1/2, \dots,
\la-a_n+1-m-1/2)
\]
and impose (\ref{vgood}). We get for one odd root:
\[
(\Lambda+\rho)(\ep_1-\de_1)=\mu_1+m-1/2+(\lambda+n-m-1/2)<0 .
\]
It gives $\lambda<1-\mu_1-n$, which ensures that $\Theta_{\la+\rho}$
is unitarizable \cite[Prop.7.4]{jakobsen}.

The remaining cases are checked in similar ways. Namely for each $\fg$ in
(\ref{goodlist}), we check that
(\ref{hccone}) and (\ref{vgood}) imply that $\Theta_{\la + \rho}$
appears in the list of unitarizable modules in \cite{jakobsen}.
The only rare exceptions occur in $F(4)$ and $G(3)$,
see remark below.
\end{proof}

\begin{remark}
Unitarizable modules for real forms of $F(4)$ and $G(3)$.

{\rm For $F(4)$ and $G(3)$, the condition $\la \in C$ of
(\ref{vgood}) is neither weaker nor stronger than
  unitarizability in \cite{jakobsen}.
  So the real forms in (\ref{goodlist}) have some supermodules which are unitarizable,
  as well as other supermodules which are not unitarizable.
  For example, we consider $G(3)$.
 We are concerned only about the case I
in \cite[p.102]{jakobsen}, since it is the only admissible positive system (\ref{howa}).
  In the notation of \cite[Prop.11.2]{jakobsen}, a short
  calculation gives (\ref{vgood})
  as $\mu<a+b-10$, but
unitarizability is under the more restrictive condition
$\mu<-3a-3b-9$; we have in both cases $a$, $b-a>0$, due to ordinary
unitarizability.
For convenience, we shall ignore these exceptions without explicitly saying so.
\label{apoo}
}\end{remark}

\sk

Proposition \ref{eeej} is consistent with \cite[Thm.6.2.1]{ns},
which provides a list of $\fg$ with no nontrivial unitary representation.

Assume for now that $G$ is an ordinary connected Lie group. We identify
$\fg$ with the left invariant vector fields of $G$, so
we similarly identify $\we^n \fg^*$ with the left invariant
$n$-forms of $G$.
Let $U \subset G$ be a closed subgroup.
We define
\begin{equation}
\we^n (\mathfrak{g}, \mathfrak{u})^*:=
\{ \om \in \we^n \mathfrak{g}^* \;;\;
\ad_v^* \om =
\imath(v)\om = 0
\mbox{ for all } v \in \mathfrak{u}\}.
\label{motiva}
\end{equation}
Here $\ad_v^* : \we^n \mathfrak{g}^* \lra \we^n \mathfrak{g}^*$
is the coadjoint map, and
$\imath(v) :  \we^n \mathfrak{g}^* \lra \we^{n-1} \mathfrak{g}^*$
is the contraction map.
The natural quotient $\pi: G \lra G/U$ intertwines with the left $G$-action, so
it has pullback
$\pi^* : \Om^n(G/U)^G \hookrightarrow \Om^n (G)^G$,
where $\Om^n(\cdot)^G$
denotes $n$-forms that are invariant under the left $G$-action.
Since $\pi^*$ is injective, we can identify $\Om^n(G/U)^G$ with
its image in $\Om^n (G)^G \cong \we^n \fg^*$,
consisting of elements that are invariant under the coadjoint map
of $\mathfrak{g}$ and annihilate tangent vectors of $\mathfrak{g}$.
This motivates (\ref{motiva}),
which gives
\[ \we^n (\mathfrak{g}, \mathfrak{u})^* \cong \Om^n(G/U)^G .\]
For $n = \dim \fg - \dim \fu$, the non-zero elements
(if they exist) in $\we^n (\mathfrak{g}, \mathfrak{u})^*$
are unique up to non-zero scalar multiple, and they
amount to $G$-invariant measures of $G/U$.
The existence of such measures is closely related to the notion
of unimodular groups.

We recall the unimodular Lie groups; see for instance \cite[VIII]{kn}.
For each $x \in G$, let
$\mbox{Ad}_x \in \mbox{Aut}(\fg)$ denote the adjoint representation,
and let $\De(x) = \det \mbox{Ad}_x$.
Then $\De : G \lra \br^+$ is a group homomorphism, called the modular function of $G$.
If $\De(x) = 1$ for all $x \in G$, we say that $G$ is unimodular.

The Haar measure $\mu_G$ is invariant
under the left action of $G$ on itself.
One can also consider a right invariant measure of $G$,
and in fact $\De \cdot \mu_G$ is right invariant \cite[Cor.8.30(c)]{kn}.
Therefore, we have two equivalent definitions for unimodular groups.

\begin{definition}
{\rm \cite[VIII-2]{kn} We say that a Lie group $G$ is unimodular if
its left invariant Haar measure is also right invariant, or equivalently
$\De(x) = 1$ for all $x \in G$.}
\label{whted}
\end{definition}

\begin{example}
{\rm \cite[Cor.8.31]{kn} An abelian Lie group is unimodular because its adjoint representation is trivial.
A compact Lie group is unimodular because the image of $\De$ is a compact subgroup of $\br^+$, and hence is
$\{1\}$. A semisimple Lie group is unimodular because its 1-dimensional representation is trivial.
A reductive Lie group is unimodular because it is generated by an abelian subgroup and a semisimple subgroup,
both of which are unimodular.
} \label{unim}
\end{example}

We now let $G$ be the real form of a contragredient Lie supergroup,
as outlined in the introduction.
Motivated by the Lie group setting,
we again adopt (\ref{motiva}) as the definition for
$\Om^n(G/U)^G \cong \we^n (\mathfrak{g}, \mathfrak{u})^*$.
It differs from the Lie algebra setting because odd vectors commute,
namely $v \we w = w \we v$ for $v,w \in \fg_\1$.
If $G$ or $\fg$ has super dimension $r|k$, we say that its dimension is $r+k$,
to ease the notation.
Let $\dim G = m$. Then $\we^m \fg^*$ is more than 1-dimensional,
for instance it contains
$f \we ... \we f$ where $f \in \fg_\1^*$.
The Haar super measure of $G$ is
given by $\tau_1 \we ... \we \tau_m \in \we^m \fg^*$,
where $\{\tau_i\}$ is a basis of $\fg^*$.
Because of $G$ invariance, this notion is equivalent to
the Haar super measures as in \cite{coulembier}\cite{all2}.

\begin{proposition}
Let $G$ be a real contragredient Lie supergroup.
Let $U$ be a unimodular subgroup of $G_\0$.
Let $n = \dim \gz - \dim \fu$ and $k = \dim \fg_\1$.
There exists a $G$-invariant measure of $G/U$
 given by
$\tau \we \nu \in \we^{n+k}(\fg,\fu)^*$,
where
$\tau \in \we^n (\fg, \fu)^*$ is extended from $\we^n(\gz, \fu)^*$
by annihilating $\fg_\1$,
and $\nu \in \we^k(\fg, \gz)^*$.
\label{ffub}
\end{proposition}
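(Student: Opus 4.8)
The plan is to build the invariant measure as a wedge of an even factor $\tau$ coming from $\Gz/U$ and an odd factor $\nu$ spanning the top of $\go^*$, and to reduce the two requirements defining $\we^{n+k}(\fg,\fu)^*$ in (\ref{motiva}) — horizontality $\imath(v)\om=0$ and coadjoint invariance $\ad_v^*\om=0$ for $v\in\fu$ — to a pair of vanishing–trace statements. Since $\tau$ is extended from $\we^n(\gz,\fu)^*$ by annihilating $\go$, and $\nu\in\we^k(\fg,\gz)^*$ is purely odd (hence killed by $\imath(v)$ for every $v\in\gz$), the Leibniz rule for the derivation $\ad_v^*$ and for the contraction $\imath(v)$ gives at once $\imath(v)(\tau\we\nu)=0$ and $\ad_v^*(\tau\we\nu)=(\ad_v^*\tau)\we\nu+\tau\we(\ad_v^*\nu)=0$ for $v\in\fu$, once each factor is shown invariant. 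So the whole problem splits into constructing $\tau$ and $\nu$, and the $G$-invariance of the result is then automatic from the identification $\we^{n+k}(\fg,\fu)^*\cong\Om^{n+k}(G/U)^G$.

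First I would treat the even factor. The subspace of $\om\in\we^n\gz^*$ with $\imath(v)\om=0$ for all $v\in\fu$ is one–dimensional, spanned by the pullback of a top form on $\gz/\fu$, where $n=\dim\gz-\dim\fu$. Coadjoint invariance of this form under $v\in\fu$ is exactly the condition $\mathrm{tr}(\ad_v|_{\gz/\fu})=\mathrm{tr}(\ad_v|_{\gz})-\mathrm{tr}(\ad_v|_{\fu})=0$. The second term vanishes because $U$ is unimodular, and the first vanishes because $\ad_v$ preserves the restriction of the invariant form $B$ to $\gz$, which is symmetric and nondegenerate, so $\ad_v|_{\gz}\in\fso(\gz,B)$ is trace–free. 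This produces the desired nonzero $\tau\in\we^n(\gz,\fu)^*$, which I extend to $\fg^*$ by declaring it to annihilate $\go$.

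Next comes the odd factor, which I expect to be the main obstacle because of the super sign conventions. Here $\nu$ is the top odd density, the class of $f_1\we\cdots\we f_k$ for a basis $\{f_i\}$ of $\go^*$, and the correct notion of its $\gz$-transformation is the Berezinian one: on the purely odd space $\go$ an infinitesimal linear map acts on the top density by minus its ordinary trace, so $\ad_v^*\nu=-\,\mathrm{tr}(\ad_v|_{\go})\,\nu$ for $v\in\gz$. This is the delicate point, since the naive symmetric–product coadjoint action must be replaced by this determinantal one, matching the Haar supermeasures of \cite{coulembier}\cite{all2}. It then remains to show $\mathrm{tr}(\ad_v|_{\go})=0$ for all $v\in\gz$, and this again follows from $B$: its restriction to $\go$ is nondegenerate and antisymmetric, since on odd vectors super symmetry reads $B(x,y)=-B(y,x)$, and invariance gives $B([v,x],y)+B(x,[v,y])=0$ for $v\in\gz$, so $\ad_v|_{\go}\in\fsp(\go,B)$ is trace–free. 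Hence $\nu\in\we^k(\fg,\gz)^*$ is $\gz$-invariant, in particular $\fu$-invariant.

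Finally I would assemble the factors. By the Leibniz computation above, $\tau\we\nu$ satisfies both defining conditions of $\we^{n+k}(\fg,\fu)^*$ for every $v\in\fu$, and it is nonzero because $\tau$ is a nonzero even top form while $\nu$ is a nonzero top odd density, so their product is the full top density on $G/U$ (note $n+k=(\dim\gz-\dim\fu)+\dim\go=\dim(G/U)$). Under $\we^{n+k}(\fg,\fu)^*\cong\Om^{n+k}(G/U)^G$ this is a left $G$-invariant measure, and by the invariance remark preceding the proposition it agrees with the Haar supermeasure viewpoint of \cite{coulembier}\cite{all2}, so it serves for Berezin integration. The point demanding the most care throughout is the bookkeeping of even/odd signs, so that the two trace computations — orthogonal on $\gz$, symplectic on $\go$ — are correctly identified as the infinitesimal even and odd modular characters.
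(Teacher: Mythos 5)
Your proposal is correct and reaches the proposition by a genuinely different route from the paper's. The main divergence is the odd factor: the paper argues globally, noting that any right translate of the Haar supermeasure $\de \we \nu$ is again a Haar supermeasure, so uniqueness up to scalar (\ref{spba}) produces a real one-dimensional representation $\Ad^* : G_\0 \lra \aut(\br(\nu))$ in (\ref{tidc}), which is trivial because $G_\0$ is reductive, generated by the compact center $Z \subset T$ and its semisimple commutator subgroup, neither of which admits a nontrivial real character; this gives $\ad_x^*\nu = 0$, i.e. (\ref{trrr}), and the invariant measure on $G_\1 = G/G_\0$ recorded in (\ref{petm}). You instead argue infinitesimally: the Berezinian law $\ad_v^*\nu = -\mathrm{tr}(\ad_v|_{\go})\,\nu$ together with the fact that $\ad_v|_{\go}$ preserves the nondegenerate antisymmetric form $B|_{\go}$, hence lies in $\fsp(\go, B)$ and is trace-free. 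Similarly, for the even factor the paper quotes \cite[Prop.8.36]{kn} (using unimodularity of $G_\0$ and of $U$) and then checks by hand that the extension of $\tau_\0$ annihilating $\go$ is killed by $\ad_v^*$, whereas you rederive existence from the splitting $\mathrm{tr}(\ad_v|_{\gz/\fu}) = \mathrm{tr}(\ad_v|_{\gz}) - \mathrm{tr}(\ad_v|_{\fu})$, killing the first term by $\fso(\gz, B)$ and the second by unimodularity of $U$. Your route is more self-contained: it needs neither the uniqueness of the Haar supermeasure nor the structure theory of reductive groups, only the invariant form $B$ fixed in the paper's setup, and it exhibits the even and odd infinitesimal modular characters explicitly as orthogonal and symplectic traces. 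The paper's global argument, in exchange, establishes invariance at the group level (not merely for $\gz$) and ties the construction directly to the uniqueness theorems of \cite{coulembier}\cite{all2}.

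Two remarks. First, you rightly single out the Berezinian convention as the delicate point: in the paper's literal formalism, where odd covectors commute, the derivation action of $\ad_v^*$ on $\nu_1 \we \cdots \we \nu_k$ produces off-diagonal monomials with repeated factors and obeys no scalar transformation law at all, so neither your formula nor the paper's step (\ref{spba}) holds under that literal reading; both proofs implicitly pass to the Berezinian line, whose uniqueness up to scalar is the content of \cite{coulembier}\cite{all2}, so this is a defect of the shared formalism rather than of your argument. Second, your Leibniz assembly uses $\ad_v^*\tau = 0$ on all of $\fg$, while your trace computation establishes it only on $\gz$; since $v \in \fu$ is even, $\ad_v$ preserves parity and $\tau$ annihilates every argument with an odd component, so the missing verification (which the paper writes out explicitly) is one line, but it should be stated.
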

\begin{proof}
Let $\de_1 ,..., \de_m, \nu_1,..., \nu_k$ be a basis of $\fg$,
where $\de_i$ annihilates $\fg_\1$, and $\nu_i$ annihilates $\gz$.
Let $\de = \de_1 \we ... \we \de_m$ and $\nu = \nu_1 \we ... \we \nu_k$.
Then $\de \we \nu$
is a Haar super measure of $G$, so $\de$ and $\nu$ are
measures of $G_\0$ and $G_\1$ respectively.

We claim that $\nu \in \we^k(\fg, \gz)^*$.
Clearly $\imath(x) \nu = 0$ for all $x \in \gz$, so it remains to show that
\beq
\ad_x^* \nu = 0 \;,\; x \in \gz .
\label{trrr}
\eeq

Let $g \in G_\0$, and let $R_g$ denote the right action.
Then $R_g (\de \we \nu)$ is again a Haar super measure of $G$.
Such measures are unique up to scalar multiple,
so there exists $c \in C^\infty (G_\0)$ such that
\beq
 c(g) (\de \we \nu) = \mbox{Ad}_g^* (\de \we \nu)
= (\mbox{Ad}_g^* \de) \we (\mbox{Ad}_g^* \nu)
\;\;,\;\;
g \in G_\0 . \label{spba}
\eeq

Since $G$ is a real form of a contragredient Lie supergroup,
it follows that $G_\0$ is reductive \cite[\S2]{ka}. So
by Example \ref{unim}, $G_\0$ is
is unimodular, and its Haar measure satisfies $\mbox{Ad}_g^* \de = \de$.
Then (\ref{spba}) implies that $\mbox{Ad}_g^* \nu = c(g) \nu$, namely
we have a real representation
\beq
 \mbox{Ad}^* : G_\0 \lra \mbox{Aut}(\br(\nu)) .
 \label{tidc}
 \eeq
Since $G_\0$ is reductive,
it is generated by its center $Z$
and semisimple subgroup $(G_\0)_{\rm ss}$.
We have $Z \subset T$, so $Z$ is compact.
For compact groups and semisimple Lie groups,
the only real 1-dimensional representation
is the trivial representation.
Hence $\mbox{Ad}^*(Z)$ and $\mbox{Ad}^*(G_\0)_{\rm ss}$ act trivially
on $\br(\nu)$,
and so (\ref{tidc}) is a trivial representation.
This implies (\ref{trrr}). We have shown that:
\beq
\mbox{$G_\1 = G/G_\0$ has a $G$-invariant measure
given by $\nu \in \we^k(\fg, \gz)^*$.}
\label{petm}
\eeq

Let $n = \dim \gz - \dim \fu$.
Since $G_\0$ and $U$ are unimodular,
$G_\0/U$ has a $G_\0$-invariant measure \cite[Prop.8.36]{kn}.
In other words, there exists
$0 \neq \tau_\0 \in \we^n (\gz, \fu)^*$.
We extend it to $\tau \in \we^n \fg^*$ by annihilating $\fg_\1$.
We claim that
$\tau \in \we^n(\fg, \fu)^*$.
Let $v \in \fu$. It is clear that $\imath(v) \tau = 0$,
and it remains to check that $\ad_v^* \tau = 0$.
Let $x^1,...,x^n \in \fg$, and write $x^i = x_\0^i + x_\1^i$. We have
\[ \begin{array}{rll}
(\ad_v^* \tau)(x^1,...,x^n)
& = \sum_i \tau(x^1,..., [v,x^i],..., x^n) & \\
& = \sum_i \tau(x_\0^1,..., [v,x^i]_\0 ,..., x_\0^n)
& \mbox{because $\tau$ annihilates $\fg_\1$} \\
& = \sum_i \tau(x_\0^1,..., [v,x_\0^i] ,..., x_\0^n)
& \mbox{because $[v,x_j^i] \in \fg_j$} \\
& = (\ad_v^* \tau_\0)(x_\0^1,..., x_\0^n)
& \\
& = 0
& \mbox{because $\tau_\0 \in \we^n(\gz,\fu)^*$}.
\end{array}
\]
This shows that $\tau \in \we^n(\fg, \fu)^*$ as claimed.

Let $\nu \in \we^k(\fg, \gz)^*$ be given by (\ref{petm}).
Then $\tau \we \nu \in \we^{n+k}(\fg,\fu)^*$.
Since $\tau$ and $\nu$
give super measures of $G_\0/U$ and $G_\1$ respectively,
it follows that $\tau \we \nu$
gives a super measure of $G/U$.
\end{proof}



The contragredient Lie supergroup $L$ has an Iwasawa decomposition of open subset
$GAN \subset L$, where the unipotent subgroup $N$ is determined by an
admissible positive system (\ref{howa}). In this way,
we obtain the complex supermanifold
\[ X = GA \]
as an open subset of the complex homogeneous superspace $L/N$ (see \cite{cfv}).

Let $\cH(X)$ denote the space of holomorphic functions on $X$.
We first define an action on $\cH(X)$.
In the Super Harish-Chandra pairs (SHCP) formalism,
an action of $G=(G_\zero, \fg)$
is given by a pair consisting of an action of the ordinary Lie group $G_\zero$
and an action of the Lie superalgebra $\fg$, with compatibility conditions
\cite[Ch.7]{ccf}.
Since $X$ is the Cartesian product of the Lie supergroup $G$
and the ordinary Lie group $A$, it splits \cite[Ch.9]{ccf}.
So we can express a holomorphic function as
\begin{equation}
\label{split-comp}
f=\sum_I f_I\xi^I \in
\cH(G_\zero A) \otimes \wedge(\xi) \cong \cH(X) .
\end{equation}
Here we use multiple index notation, for example if $I=\{1,2\}$, then
$\xi^I = \xi^1 \xi^2$.
The action of $G_\zero$
on $f \in \cH(X)$
is given by the
action on each component $f_I$ together with
an action on $\xi^I$ preserving degree
\cite[Sec.4]{cfv}.
The action of the Lie superalgebra $\fg$ is via left invariant
vector fields.

We have $G \times H$-action (namely left $G$ and right $H$) on $X$,
because $H$ normalizes $N$.
The actions of $G$ and $T$ commute. This is true
in the ordinary setting, so the actions of $G_\zero$ and $T$ commute.
As for the super setting, in the SHCP language,
the infinitesimal action of $\fg$ is via the
left invariant vector fields,
hence it also commutes with the right action of $T$.

We now want to give a functorial point of view of these actions.
Let $\fg=\Lie(G)$, $\{\Xi^i, \, i=1, \dots, m\}$
a basis for $\fg_\one$.
We write any
$g \in G(\zR)$ as
\[
g=g_\zero(1+\xi^1\Xi^1) \dots (1+\xi^m\Xi^m), \qquad
\xi^i \in \zR_\one, g_\zero \in G_\zero(\zR) ,
\]
where $\zR$ is the
superalgebra of global sections of a supermanifold, and
$G(\zR)$ are the $\zR$ points of the Lie supergroup $G$
\cite[Props.3.2.4,3.5.2]{ga}\cite[Ch.3]{ccf}.
Hence to define an action of $G$ on $\cH(X)$,
we only need to provide a functorial action
of $g_\zero \in G_\zero(\zR)$ and $1+\xi^i\Xi^i$ on the $\zR$
points of the super vector
space $\cH(X)$, that is on:
$$
\cH(X)(\zR):= \zR_\zero \otimes \cH(X)_\zero \oplus \zR_\one \otimes \cH(X)_\one
$$
\cite[Ch.1,3,8]{ccf}.
The action of $g_\zero$ is given
by the ordinary theory, $\zR$ linearity and (\ref{split-comp}),
so we only need to specify the action of $1+\xi^i\Xi^i$:
\beq\label{act-gr}
\begin{array}{l}
(1+\xi^i\Xi^i) \cdot (rf+\rho \phi) = r f + \rho \phi +
\xi^i (r \Xi^i(f) - \rho \Xi^i(\phi) ), \\
\mbox{where }  rf \in \zR_\0 \otimes  \cH(X)_\0
\mbox{ and } \rho \phi \in \zR_\1 \otimes  \cH(X)_\1 .
\end{array}
\eeq

For an integral weight $\la \in i \ft^*$, we let $\chi : T \lra S^1$
be its character, namely $\exp(\la(v)) = \chi(\exp v)$ for all $v \in \ft$.
Then for a $G \times T$-module $V$ (such as $V = \ch(X)$), we use
$\la \in i\ft^*$ to define the $G$-subrepresentation
\beq
 V_\la = \{f \in V \;;\;
R_t f = \chi(t) f \mbox{ for all } t \in T \} ,
\label{puku}
\eeq
where $R$ is the right $T$-action.

We now consider the holomorphic functions $\cH(X)$.
By (\ref{puku}), we obtain $\cH(X)_\la \subset \cH(X)$. Let $\la \in C$,
for $C$ in (\ref{vgood}). Let
\beq
\begin{array}{rcl}
W_\la & = & \mbox{smallest SHCP representation of $(G_\0,\fg)$ in $\cH(X)_\la$} \\
& & \mbox{which contains the highest weight vector.}
\label{fupi}
\end{array}
\eeq
Then $W_\la \cong \Theta_{\la + \rho}$, where $\Theta_{\la + \rho}$
is the Harish-Chandra supermodule with highest weight $\la$ \cite[Thm.1]{cfv}. Consider
\[ \oplus_C W_\la \subset \ch(X) ,\]
summed over all the integral weights $\la \in C$.
In this way, each $W_\la$ is irreducible.
We provide an $L^2$-structure on $\oplus_C W_\la$
by Berezin integration \cite[\S4.6]{vsv2}
on the invariant form of $X$.
Let $\mu_G$, $\mu_{G_\0}$ and $\mu_A$ be the
Haar super measures of $G$, $G_\0$ and $A$ respectively,
and let $\mu_X = \mu_G \mu_A$ be their product measure on $X$.
Fix a positive real valued function $\psi \in C^\infty(A)$,
and extend it uniquely to a $G$-invariant
function on $X$, still denoted by $\psi$.
We define
\beq
(f,h) = \int_X f \bar{h} \psi \, \mu_X
\;\;,\;\; f,h \in \oplus_C W_\la . \label{hemi}
\eeq
We shall see in the next proposition that (\ref{hemi})
is positive definite on the square integrable functions,
so that we can take the completion,
\beq
 W^2 = \mbox{completion of } \{f \in \oplus_C W_\la ;\;\;
 (f,f) \mbox{ converges}\} .
 \label{hemij}
 \eeq
Clearly $W^2$ depends on $\psi$. In later section, we shall let $\psi = e^{-F}$,
where $F$ is the potential function of a pseudo-K\"ahler form.


\begin{proposition}
For all $\fg$ in (\ref{goodlist}),
$W^2$ is a unitary $G \times T$-representation with respect to
the positive definite super Hermitian metric (\ref{hemi}).
\label{veca}
\end{proposition}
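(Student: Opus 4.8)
The plan is to verify the two defining conditions of a unitary $G \times T$-representation separately: first that the super Hermitian metric (\ref{hemi}) is well-defined and positive definite on the square-integrable elements (so that $W^2$ is genuinely a super Hilbert space in the sense of (\ref{shil})), and second that the infinitesimal action of $\fg$ satisfies the unitarity identity (\ref{unita}), together with the unitarity of the $G_\0$-action. The integration is Berezin integration against $\mu_X = \mu_G\,\mu_A$, so throughout I would exploit the product structure $\mu_G = \mu_{G_\0}\,\nu$ supplied by Proposition \ref{ffub}, writing the integral over $X = G A$ as an honest (convergent) integral over the reductive group $G_\0 A$ followed by the Berezin (coefficient-extracting) integral in the odd directions.

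First I would establish that (\ref{hemi}) defines a consistent super Hermitian form. Using the splitting (\ref{split-comp}), $f = \sum_I f_I \xi^I$, the Berezin integral $\int_X f \bar h\, \psi\, \mu_X$ picks out the top odd coefficient of $f\bar h$ and integrates the resulting ordinary function over $G_\0 A$ against $\mu_{G_\0}\mu_A$, weighted by the $G$-invariant positive function $\psi$. Conditions (a) and (b) of (\ref{shil}) are formal consequences of the $\Z_2$-grading of $\wedge(\xi)$ and of complex conjugation interchanging the entries with the sign dictated by the parities; condition (c), super positivity, is where I must check that even-even pairings land in $\br^+$ and odd-odd pairings in $i\br^+$, which follows from the convention that the Berezin integral supplies the requisite factor of $i$ on the odd part together with positivity of $\psi$ and of the ordinary measure. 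Positive definiteness on the square-integrable subspace is then immediate from positivity of the integrand $f\bar f\,\psi$ on the even part, which justifies taking the completion in (\ref{hemij}).

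Next I would verify the unitarity identity (\ref{unita}) for the $\fg$-action. Since $\fg$ acts by left-invariant vector fields and $T$ acts on the right, the two commute, so it suffices to treat $G_\0$ and the odd generators $\Xi^i$ separately. For $X \in \gz$ the statement reduces, after the product decomposition of $\mu_X$, to the standard fact that a left-invariant vector field is skew-adjoint for integration against a $G_\0$-invariant measure; here invariance of $\mu_{G_\0}$ and of $\psi$ (as a $G$-invariant function) under the left action, guaranteed by unimodularity of the reductive group $G_\0$ established in the proof of Proposition \ref{ffub}, is exactly what makes the boundary terms vanish and produces the required sign. For an odd generator $X = \Xi^i$ acting via (\ref{act-gr}), the sign $(-1)^{|X||v|}$ in (\ref{unita}) is precisely the sign produced by moving the odd derivation past the odd coefficient in the Berezin integral, so the identity becomes an integration-by-parts statement in the odd variables, which holds because the Berezin integral of a total odd derivative vanishes. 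Matching the $i\br^+$ normalization on the odd part from (\ref{shil})(c) with this sign is the delicate bookkeeping step.

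The main obstacle I anticipate is not the algebra of signs but the analytic convergence: the integral (\ref{hemi}) runs over the non-compact group $G_\0 A$, and the skew-adjointness argument for $X \in \gz$ requires that the relevant integrals actually converge and that no boundary contribution survives. Because each $W_\la \cong \Theta_{\la+\rho}$ is a unitarizable highest weight module (Proposition \ref{eeej}), its matrix coefficients decay suitably, and the weight function $\psi = e^{-F}$ with $F$ strictly convex forces Gaussian-type decay in the $A$-directions; I would lean on these two inputs to guarantee that the completion in (\ref{hemij}) is nontrivial and that integration by parts is legitimate. The remaining effort is to confirm that the $G_\0$-representation so obtained integrates the skew-adjoint $\fg$-action to a genuine unitary representation of the group $G_\0$, and that the SHCP compatibility conditions of \cite{ccf}\cite{cfv2} between the $G_\0$- and $\fg$-actions are preserved by the metric, which is formal once skew-adjointness is in hand.
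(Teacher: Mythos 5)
There is a genuine gap, and it sits at the exact center of the proposition. You assert that super positivity and positive definiteness of (\ref{hemi}) are ``immediate from positivity of the integrand $f\bar{f}\,\psi$ on the even part.'' That inference is invalid for Berezin integration. Writing $f=\sum_I f_I\xi^I$ as in (\ref{split-comp}) and $\mu_X=\mu_{G_\0 A}\,\bigl(\sum_{K\in S}a_K\xi^K\bar{\xi}^K\bigr)$ as in (\ref{muzz}), the Berezin integral extracts top odd coefficients and produces, as in (\ref{leng}),
\[
(f,f)=\sum_{I\in S^c}c_I\int_{G_\0 A}f_I\bar{f}_I\,\psi\,\mu_{G_\0 A},\qquad c_I\neq 0,
\]
and nothing forces the constants $c_I$ to be positive: they carry the reordering signs of the odd variables and the signs of the $a_K$. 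Positivity of $\psi$ and of the ordinary measure therefore does not yield positivity of $(f,f)$. A symptom that the argument cannot be right: you invoke the hypothesis that $\fg$ lies in (\ref{goodlist}) (via Proposition \ref{eeej}) only for matrix-coefficient decay and convergence, so as written your proof would establish the proposition for \emph{every} real form with compact Cartan subalgebra; but the restriction to (\ref{goodlist}) is essential, which is exactly why the statement carries it.

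The paper's proof supplies the mechanism you are missing. After establishing (a) that the square-integrable elements form a super vector space (componentwise, much as you do) and (b) invariance of (\ref{hemi}) under $G\times T$ (your integration-by-parts step, which is essentially the paper's and is fine), it proves two purely algebraic facts: (c) for any invariant super Hermitian form, distinct summands $W_\la$, $W_\nu$ and distinct $\ft$-weight spaces inside a single $W_\la$ are orthogonal; and (d) the $G$-invariant Hermitian form on each irreducible $W_\la$ is unique up to scalar, shown by pairing vectors $X_1\cdots X_n v$ against the one-dimensional even highest weight space. Only then does positivity follow: unitarizability of $\Theta_{\la+\rho}$ for $\fg$ in (\ref{goodlist}) (Proposition \ref{eeej}, i.e.\ Jakobsen's classification) guarantees that $cH$ is positive definite on $W_\la$ for \emph{some} scalar $c$, and evaluating $H$ on ordinary non-zero functions --- the one place where (\ref{leng}) does give a manifestly positive value --- pins down $c>0$. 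Orthogonality (c) then gives positive definiteness on the whole sum, and the completion is taken afterwards. Without steps (c), (d) and this use of unitarizability, the positivity claim, which is the real content of the proposition, does not go through.
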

\begin{proof}
Let
\[ (\oplus_C W_\la)^2 = \{f \in \oplus_C W_\la ;\;\;
 \int_X f \bar{f} \psi \, \mu_X \mbox{ converges}\} .\]
We restrict the $\fg$-action on $W_\la$ to $\ft$, and obtain
the weight space decomposition $W_\la = \oplus_a W_\la^a$.
The proof is divided into four steps:
\beq \begin{array}{cl}
\mbox{(a)} & (\oplus_C W_\la)^2 \mbox{ is a super vector space}, \\
\mbox{(b)} & \mbox{the $G \times T$-action on $\oplus_C W_\la$ preserves (\ref{hemi})},\\
\mbox{(c)} & \mbox{for any $G \times T$-invariant super Hermitian form $H$ on $W_\la \oplus W_\nu$}, \\
& \mbox{we have $H(W_\la,W_\nu) = H(W_\la^a, W_\la^b) = 0$ for $\la \neq \nu$ and $a \neq b$,} \\
\mbox{(d)} & \mbox{the $G$-invariant Hermitian forms on each $W_\la$ are unique
up to scalar.}
\end{array}
\label{pdu}
\eeq

We apply Proposition \ref{ffub} with $U=\{e\}$ and
express the Haar super measure of $G$ as
$\mu_G = \mu_{G_\0} \, \mu_{G_\1}$, where $\mu_{G_\0}$
is the Haar measure of $G_\0$, and $\mu_{G_\1}$
is the $G$-invariant measure of $G_\1 = G/G_\0$.
We take its product with the Haar measure $\mu_A$ of $A$,
and obtain a
$G \times A$-invariant measure of $X$ by
$\mu_X = \mu_G \, \mu_A = \mu_{G_\0 A} \, \mu_{G_\1}$,
where $\mu_{G_\0 A} = \mu_{G_\0} \, \mu_A$.
Let $P$ be the power set of $\{1,..., \dim \fg_\1\}$.
Since $\mu_X$ is a real measure,
there exists $S \subset P$ and non-zero constants $\{a_K\}_{K \in S}$ such that
\beq \mu_X = \mu_{G_\0 A} \, \mu_{G_\1} = \mu_{G_\0 A} (\sum_{K \in S} a_K \xi^K \bar{\xi}^K) .
\label{muzz}
\eeq

We first prove (\ref{pdu})(a).
For $I \in P$, we let $I^c$ be its complement, for example
$\{1,2\}^c = \{3,..., \dim \fg_\1\}$.
For $S$ given by (\ref{muzz}), let
$S^c = \{I^c \;;\; I \in S\}$.
By (\ref{split-comp}), we write
$f = \sum_{I \in P} f_I \xi^I \in \oplus_C W_\la$.
By (\ref{hemi}) and (\ref{muzz}),
\beq
 (f,f) = \int_{G_\1} \int_{G_\0 A} f \bar{f} \psi \, \mu_{G_\0 A} \mu_{G_\1}
= \sum_{K \in S} \sum_{I \in P} \int_{G_\1}
\int_{G_\0 A} f_I \bar{f}_I \xi^I \bar{\xi}^I \psi \, \mu_{G_\0 A} \,
 a_K \xi^K \bar{\xi}^K .
\label{sxff}
\eeq
The Berezin integration annihilates all the monomials $\xi^I \bar{\xi}^J$
other than $\xi_1 ... \xi_m \bar{\xi}_1 ... \bar{\xi}_m$
\cite[\S4.6]{vsv2}. So for each $K \in S$, the only $I$ in (\ref{sxff})
with non-trivial Berezin integration is $I = K^c \in S^c$,
and in that case we write
$c_I = \int_{G_\1} a_K \xi^I \bar{\xi}^I \xi^K \bar{\xi}^K \neq 0$.
Then (\ref{sxff}) becomes
\beq
 (f,f)
 = \sum_{I \in S^c} c_I \int_{G_\0 A} f_I \bar{f}_I \psi \,
 \mu_{G_\0 A} \;\;,\;\; c_I \neq 0.
 \label{leng}
 \eeq
 Let
$\cH^2(G_\0 A , \psi) =\{h \in \ch(G_\0 A) \;;\; \int_{G_\0 A}
 h \bar{h} \psi \, \mu_{G_\0 A} < \infty\}$.
 By (\ref{leng}),
\beq
f \in (\oplus_C W_\la)^2 \; \Longleftrightarrow \; f_I \in \cH^2(G_\0 A, \psi)
\mbox{ for all } I \in S^c .
\label{adam}
\eeq
By (\ref{adam}), since $\cH^2(G_\0 A, \psi)$ is a vector space,
so is $(\oplus_C W_\la)^2$.
This proves (\ref{pdu})(a).

Next we prove (\ref{pdu})(b).
Let $f,h \in \oplus_C W_\la$ and $X \in \fg$.
By the property of super Haar measure \cite[Lem.2.2]{coulembier},
\beq
\begin{array}{rl}
( Xf,h ) + (-1)^{|X||f|} ( f,Xh ) &=
\int_{GA} \left[ (X(f)\overline{h}+(-1)^{|X||f|}f \overline{X(h)} \right]
\psi \, \mu_X  \\
&=  \int_{GA} X(f\overline{h}) \psi \, \mu_X =0 .
\end{array}
\label{agek}
\eeq
The representation of $\fg$ satisfies the conditions (U1) and (U2)
of \cite[Thm.2]{cfv2}, so the action of $G$ on $W$ preserves (\ref{hemi}).
Since $G_\0$ is unimodular, its Haar measure
is invariant under the right $T$-action, so
the right $T$-action preserves (\ref{hemi}).
This proves (\ref{pdu})(b).

Next we prove (\ref{pdu})(c).
Let $H$ be a $G \times T$-invariant super Hermitian form on $W_\la \oplus W_\nu$.
We first show that $H(W_\la,W_\nu)=0$.
Let $\chi_\la, \chi_\nu : T \lra S^1$ be the characters of $\la,\nu$.
Since $\la, \nu$ are distinct, there exists $t \in T$ such that
$\chi_\la(t) \overline{\chi_\nu(t)} \neq 1$.
Let $R$ denote the right $T$-action.
By (\ref{fupi}), the elements of $W_\la$ and $W_\nu$
transform by $\chi_\la$ and $\chi_\nu$ respectively under $R$.
So for $f \in W_\la$ and $h \in W_\nu$,
\beq
 H(f,h) = H(R_t f, R_t h) = H(\chi_\la(t) f, \chi_\nu(t) h)
= \chi_\la(t) \overline{\chi_\nu(t)} H(f,h) .
\label{ikut}
\eeq
Since $\chi_\la(t) \overline{\chi_\nu(t)} \neq 1$, this implies that
$H(f,h) =0$. Hence $H(W_\la, W_\nu)=0$.

Next we show that $H(W_\la^a,W_\la^b) = 0$.
We restrict the $\fg$-action on $W_\la$ to $\ft$,
and let $\pi$ be the corresponding $T$-action.
We repeat the method of (\ref{ikut}),
namely if $f \in W_\la^a$ and $h \in W_\la^b$, then
$H(f,h) = H(\pi_t f, \pi_t h) = \overline{\chi_a(t)} \chi_b(t) H(f,h)$ vanishes.
This proves (\ref{pdu})(c).

Finally we prove (\ref{pdu})(d).
The highest weight space of $W_\la = \oplus_a W_\la^a$ is $W_\la^\la$.
It is even and satisfies
$\dim W_\la^\la = 1$.
Let $H_1,H_2$ be $G$-invariant super Hermitian forms on $W_\la$.
We multiply $H_1$ by a scalar so that $H_1 = H_2$ on $W_\la^\la$.
Let $0 \neq v \in W_\la^\la$, and consider all vectors of the form
$X_1 ... X_n v$ where $X_j \in \fg$. For $j=1,2$, we have
\beq
H_j(X_1 ... X_n v, Y_1 ... Y_m v) = (-1)^n H_j(v, X_n ... X_1 Y_1 ... Y_m v) .
\label{nkes}
\eeq
Let $\rho : W_\la \lra W_\la^\la$ be the projection which annihilates all the weight spaces
other than $W_\la^\la$.
By (\ref{pdu})(c), different weight spaces are orthogonal,
so the last expression of (\ref{nkes}) is
$(-1)^n H_j(v, \rho (X_n ... X_1 Y_1 ... Y_m v))$.
It has the same value for $j=1,2$, because $H_1 = H_2$ on $W_\la^\la$.
So (\ref{nkes}) has the same value for $j=1,2$.
Since $W_\la$ consists of all linear combinations of
$\{X_1 ... X_n  v \;;\; X_j \in \fg\}$, it follows that $H_1 = H_2$ on $W_\la$.
This proves (\ref{pdu})(d).

Let $H$ be the Hermitian form (\ref{hemi}).
By (\ref{pdu})(b), the $G$-action preserves $H$,
so each irreducible subrepresentation $W_\nu$ satisfies $W_\nu \subset (\oplus_C W_\la)^2$
or $W_\nu \cap (\oplus_C W_\la)^2 = 0$. So $(\oplus_C W_\la)^2 = \oplus_D W_\la$
for some $D \subset C$.

Let $\la \in D$.
Since $\fg$ belongs to the list (\ref{goodlist}),
by Proposition \ref{eeej}, $W_\la$ is a unitarizable $\fg$-module.
So together with (\ref{pdu})(d), there exists $c \in \bc$ such that $cH$
is positive definite on $W_\la$. But by (\ref{leng}), we have $H(f,f) > 0$
for all ordinary non-zero functions $f$, so $c$ is a positive real number,
namely $H$ is already positive definite on $W_\la$.
By (\ref{pdu})(c), distinct summands of $\oplus_D W_\la$
are orthogonal in $H$,
so $H$ is positive definite on $\oplus_D W_\la$.
We take its completion with respect to $H$,
and obtain the unitary representation $W^2$.
\end{proof}


\newpage
\section{Pseudo-K\"ahler structures}
\setcounter{equation}{0}

In this section, we prove Theorem \ref{thm1}.
Let $\fag$ be a complex contragredient Lie superalgebra.
Let $\fh$ be a Cartan subalgebra of $\fag$,
and let $\fag = \fh + \sum_\De \fag_\al$ be the
root space decomposition.
We have fixed a nondegenerate bilinear form in (\ref{coroot}).
It identifies $\fag$ with $\fag^*$,
so it allows us to write $\fh^* \subset \fag^*$ and so on.

Similar to (\ref{motiva}) and thereafter,
the left invariant $n$-forms on $L$ are
$\Omega^n(L)^L = \we^n \fl^*$.
We consider the elements which are also invariant under the right $H$-action.
Since the simultaneous left and right actions amount to the adjoint action,
they are invariant under the adjoint action of $H$. So
the $L \times H$-invariant forms on $L$ are
\begin{equation}
 \Om^n(L)^{L \times H} = \we_\fh^n \fag^* =
\{ \om \in \we^n \fag^* \;;\; \ad_v^* \om = 0 \mbox{ for all }
v \in \fh\} .
\label{mox}
\end{equation}
Here $\ad^* : \fl \lra \mbox{End}(\we^n \fl^*)$
is the coadjoint action.
We have a chain complex under the exterior derivative
$d : \we_\fh^n \fag^* \lra \we_\fh^{n+1} \fag^*$ \cite[p.234]{cw},
and we let $H^n(\we_\fh^\bullet \fag^*)$ denote the
cohomology at degree $n$.

Let $\fg$ be a real form of $\fl$, and suppose that $\fg$
has a compact Cartan subalgebra $\ft = \fg \cap \fh$.
We similarly define the chain complex $\we_\ft^\bullet \fg^*$.

\begin{proposition}
$H^2(\we_\fh^\bullet \fag^*) = H^2(\we_\ft^\bullet \fg^*) =0$.
\label{zet}
\end{proposition}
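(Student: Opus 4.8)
The plan is to compute directly in the weight-zero complex. Since $\fh$ is abelian and acts diagonalizably on $\fag$, the decomposition $\fag=\fh\oplus\fm$ with $\fm=\sum_\De\fag_\al$ induces a weight grading on $\we^\bullet\fag^*$, and by (\ref{mox}) the complex $\we_\fh^\bullet\fag^*$ is precisely its weight-zero part; equivalently, $H^n(\we_\fh^\bullet\fag^*)$ is the weight-zero part of the Chevalley--Eilenberg cohomology $H^n(\fag)$ (taking weight-zero parts is exact). Because $\we_\fh^1\fag^*=\fh^*$, proving $H^2=0$ amounts to showing that every closed $\om\in\we_\fh^2\fag^*$ equals $d\la$ for some $\la\in\fh^*$. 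I first split $\om=\om_{\fh\fh}+\om_{\fm\fm}$ according to the two arguments; the mixed piece on $\fh^*\we\fm^*$ carries nonzero weight and so cannot occur in a weight-zero form, and the $\fm$-part decomposes as $\om_{\fm\fm}=\sum_{\al}\om_\al$ with $\om_\al$ supported on $\fag_\al\we\fag_{-\al}$.

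Next I would eliminate $\om_{\fh\fh}$. For $x\in\fh$ and a root $\al$, evaluate the cocycle identity $d\om=0$ on the triple $(x,e_\al,e_{-\al})$, where $e_{\pm\al}$ span $\fag_{\pm\al}$ and $[e_\al,e_{-\al}]=h_\al$ is the coroot of (\ref{coroot}). Using $[x,e_{\pm\al}]=\pm\al(x)e_{\pm\al}$ together with the super-antisymmetry of $\om$, the two terms proportional to $\om(e_\al,e_{-\al})$ cancel: for even $\al$ because $\om(e_{-\al},e_\al)=-\om(e_\al,e_{-\al})$, and for odd $\al$ because the parity signs in the super Chevalley--Eilenberg differential flip the sign of the corresponding term. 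What remains is $(d\om)(x,e_\al,e_{-\al})=-\om(h_\al,x)$, so $\om(h_\al,x)=0$ for every root $\al$ and every $x\in\fh$. Since $B$ is nondegenerate on $\fh$ and $B(h_\al,\cdot)=\al$, while the roots span $\fh^*$, the coroots $\{h_\al\}$ span $\fh$; hence $\om_{\fh\fh}=0$ and $\om=\om_{\fm\fm}$.

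It then remains to write $\om_{\fm\fm}=d\la$. As $(d\la)(e_\al,e_{-\al})=-\la(h_\al)$, the natural candidate is the functional fixed by $\la(h_\al)=-\om(e_\al,e_{-\al})$. To see this is well defined I would check that $\al\mapsto\om(e_\al,e_{-\al})$ respects the coroot relations $h_{\al+\be}=h_\al+h_\be$: evaluating $d\om=0$ on the triple $(e_\al,e_\be,e_{-\al-\be})$ for $\al,\be,\al+\be\in\De$ gives, after a compatible normalization of the root vectors, the additivity $\om(e_{\al+\be},e_{-\al-\be})=\om(e_\al,e_{-\al})+\om(e_\be,e_{-\be})$. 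Since these relations generate all linear dependences among the $h_\al$, the assignment extends uniquely to $\la\in\fh^*$ with $d\la=\om$, giving $H^2(\we_\fh^\bullet\fag^*)=0$. For the real form I would not repeat the argument but base-change: $\fg\otimes_\br\bc=\fag$ and $\ft\otimes_\br\bc=\fh$ yield $(\we_\ft^\bullet\fg^*)\otimes_\br\bc\cong\we_\fh^\bullet\fag^*$ compatibly with $d$, and since cohomology commutes with the flat extension $\otimes_\br\bc$, the vanishing just proved forces $H^2(\we_\ft^\bullet\fg^*)=0$. The main obstacle is the careful handling of the parity and Koszul signs in the super differential---both in securing the odd-root cancellation of paragraph two and in fixing a coherent normalization of the $e_\al$ so that the additivity holds exactly; root multiplicities cause no trouble, being all one for the basic classical $\fag$.
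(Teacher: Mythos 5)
Your plan is correct and, at its core, runs on the same engine as the paper's proof: the weight-zero (invariance) condition kills everything except $\fh\we\fh$ and the opposite-root pairs $\fag_\al\we\fag_{-\al}$, the cocycle identity on triples of root vectors propagates the values $\om(x_\al,x_{-\al})$, the primitive is an element of $\fh^*$, and the real case follows by complexification. The difference is organizational. The paper never shows $\om|_{\fh\we\fh}=0$ directly; instead it defines $\phi$ only on the \emph{simple} coroots (a basis, so no well-definedness issue) and proves $\om=d\phi$ by induction on root height, comparing the two closed forms term by term. You instead kill $\om|_{\fh\we\fh}$ outright (a nice, clean step the paper leaves implicit) and define $\la$ on \emph{all} coroots, which shifts the burden onto a relation-module claim: that the additivity relations $h_{\al+\be}=h_\al+h_\be$ generate every linear dependence among the coroots. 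Two points there need care. First, evaluating $d\om=0$ on $(e_\al,e_\be,e_{-\al-\be})$ yields $N_{\al,\be}\,\om(e_{\al+\be},e_{-\al-\be})=N_{\al,\be}\bigl(\om(e_\al,e_{-\al})+\om(e_\be,e_{-\be})\bigr)$ once you normalize $B(e_\ga,e_{-\ga})=1$ (so $[e_\ga,e_{-\ga}]=h_\ga$), hence it gives additivity only when the structure constant $N_{\al,\be}$ is nonzero; for triples with $[\fag_\al,\fag_\be]=0$ the identity is vacuous. Second, the generation claim itself must be justified: both issues are resolved simultaneously by the fact that $\fn^+$ is generated by the simple root vectors, so every positive root $\ga$ of height $\geq 2$ admits a decomposition $\ga=\ga'+\al_i$ with $\al_i$ simple, $\ga'\in\De^+$ and $[\fag_{\al_i},\fag_{\ga'}]=\fag_\ga\neq 0$; these ``good'' relations reduce every coroot to simple coroots, which are linearly independent. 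This is exactly the same unproved fact the paper leans on when it asserts that each element of $S_{i+1}$ splits as $\al+\be$ with $\al,\be\in S_i$ and takes $[x_\al,x_\be]$ as a generator of $\fag_{\al+\be}$, so your proof is at the same level of rigor as the paper's, just with the induction repackaged as a well-definedness check.
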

\begin{proof}
Let $\om \in \we_\fh^2 \fag^*$.
We first claim that for all $\al + \be \neq 0$,
\beq
\om(\fh, \fag_\al) = \om(\fag_\al, \fag_\be) =0 .
\label{bee}
\eeq
Let $x_\al \in \fag_\al$ and $x_\be \in \fag_\be$.
Pick $z \in \fh$ such that $(\al + \be)(z) \neq 0$. Then
\[ \al(z) \om(x_\al, x_\be) = \om([z,x_\al], x_\be)
= - \om(x_\al, [z,x_\be]) = - \be(z) \om(x_\al, x_\be) .\]
This shows that $\om(\fag_\al, \fag_\be) =0$.
By similar arguments (regard $\fh$ as $\fg_0$),
we also have $\om(\fh, \fag_\al) = 0$.
This proves (\ref{bee}) as claimed.

The coroot (\ref{coroot}) can be written as
$h_\al = [x_\al, x_{-\al}]$, where $x_{\pm \al} \in \fl_{\pm \al}$.
Let $\Pi$ be a simple system of $\fag$.
Since $\{h_\al \;;\; \al \in \Pi\}$
is a basis of $\fh$, we can define $\phi \in \fh^*$ by
\beq
\phi(h_\al) =
 \phi([x_\al, x_{-\al}]) = - \om (x_\al, x_{-\al})
\mbox{  for all  } \al \in \Pi .
\label{bas}
\eeq

To simplify notations, we let $\om_1 = \om$ and $\om_2 = d \phi$.
By (\ref{bee}) and (\ref{bas}),
\beq
 (\om_1)|_{\sum_{\pm \Pi} \fag_\al}
= (\om_2)|_{\sum_{\pm \Pi} \fag_\al} .
\label{taba}
\eeq

Clearly $\om_2$ is closed.
Suppose that $\om_1$ is also closed.
We now prove that $\om_1 = \om_2$.
By (\ref{bee}), we need to show that
for all positive roots $\al$,
\beq
\mbox{(a)} \, (\om_1)|_\fh = (\om_2)|_\fh \;,\;
\mbox{(b)} \, (\om_1)|_{\fag_\al + \fag_{-\al}}
= (\om_2)|_{\fag_\al + \fag_{-\al}} .
\label{stn}
\eeq

Since $d \om_j = 0$, there exist $a, b = \pm 1$
such that for all $\al,\be \in \Pi$,
\beq
\begin{array}{rl}
\om_j (h_\al,h_\be) & = \om_j ([x_\al, x_{-\al}], [x_\be, x_{-\be}]) \\
& = a \om_j ([x_\al, [x_\be, x_{-\be}]], x_{-\al})
+ b \om_j ([x_{-\al}, [x_\be, x_{-\be}]], x_\al) \\
& = (-1)^{|x_\al|} (a+b)
\al([x_\be, x_{-\be}]) \om_j(x_{-\al}, x_\al) .
\end{array}
\label{gua}
\eeq
The values of $a,b$ depend only on the parities of
$\al$ and $\be$, so they are the same for $j=1,2$.
By (\ref{taba}), the last expression of (\ref{gua}) is the same
for $j=1,2$. This proves (\ref{stn})(a).

It remains to show (\ref{stn})(b).
The height of $\al \in \De^+$ is the positive integer $h$
such that $\al = \al_1 + ... + \al_h$ and $\al_i \in \Pi$.
Let $\Pi = S_1 \subset ... \subset S_n = \De^+$, where $S_i$
consists of positive roots of heights $\leq i$.
We already know that (\ref{stn})(b) holds for all $\al \in \Pi = S_1$.
To complete the induction, suppose that it holds for all $\al \in S_i$.
An element of $S_{i+1}$ can be expressed as $\al + \be$, where
$\al, \be \in S_i$. Let $[x_\al, x_\be] \in \fag_{\al+\be}$
and $[x_{-\al}, x_{-\be}] \in \fag_{-\al-\be}$.
Since $d \om_j = 0$,
\beq
 \om_j([x_\al,x_\be],[x_{-\al},x_{-\be}])
= a \om_j ([x_\al, [x_{-\al},x_{-\be}]], x_\be)
+ b \om_j ([x_\be, [x_{-\al},x_{-\be}]], x_\al) ,
\label{pon}
\eeq
where $a,b$ are the same for $j=1,2$.
By the induction assumption, the right hand side of (\ref{pon})
are the same for $j=1,2$.
so $\om_1$ and $\om_2$ agree on $\fag_{\al + \be} + \fag_{-\al-\be}$.
This completes the induction, which proves (\ref{stn})(b).
Together with (\ref{bee}), they imply that $d \phi = \om$. We have proved that
$H^2(\we_\fh^\bullet \fag^*) =0$.

Since $\we_\fh^\bullet \fag^*$ is the complexification of
$\we_\ft^\bullet \fg^*$, by the universal coefficient theorem
on chain complexes, it implies that
$H^2(\we_\ft^\bullet \fg^*) =0$.
The proposition follows.
\end{proof}

We are interested in $X = GA$.
By the Iwasawa decomposition, it is a complex supermanifold
via the embedding $GA \hookrightarrow L/N$.
The $G \times T$-invariant $(1,0)$ (resp. $(0,1)$) forms
are the complex 1-forms that are eigenvectors
of the complex structure on $GA$
with eigenvalue $i$ (resp. $-i$), and their exterior products
lead to the $G \times T$-invariant $(p,q)$-forms
$\Om^{p,q}(GA,\bc)^{G \times T}$.
The Dolbeault operators $\pa, \bp$
acts on them, where $\pa$
raises the $p$-degree and $\bp$ raises the $q$-degree.

\begin{proposition}
Every closed element of $\Om^{1,1}(GA)^{G \times T}$
can be written as $\om = i \pa \bp F$ for some $F \in C^\infty(A)$.
\label{kam}
\end{proposition}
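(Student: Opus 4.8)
The statement is an invariant $i\pa\bp$-lemma, and the plan is to prove it in the two familiar stages: first show that a closed invariant $(1,1)$-form is $d$-exact, and then upgrade $d$-exactness to $i\pa\bp$-exactness by a decomposition into types. The starting observation is that $X = GA$ is an open subset of the complex space $L/N$, so its complex structure is integrable and on forms the exterior derivative splits as $d = \pa + \bp$, with $\pa^2 = \bp^2 = 0$ and $\pa\bp = -\bp\pa$. In particular, for $\om$ of type $(1,1)$ the hypothesis $d\om = 0$ is equivalent to $\pa\om = 0$ and $\bp\om = 0$. I will also use that such $\om$ is real, $\overline{\om} = \om$, in keeping with its role as a (pseudo-)symplectic form.

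First I would produce an invariant $1$-form $\eta$ with $\om = d\eta$. Since $T \subset G$, the right $T$-action fixes the $A$-coordinate and acts by right translation on the $G$-factor of the split product $X = GA$; thus the $G \times T$-invariant de Rham complex of $X$ is $\we^\bullet\fg^* \otimes \Om^\bullet(A)$ with $T$-invariance imposed. As $A$ is contractible, a K\"unneth argument, together with the exactness of passing to $T$-invariants ($T$ being compact), identifies the degree-$2$ cohomology of this complex with $H^2(\we_\ft^\bullet\fg^*)$, which vanishes by Proposition \ref{zet}. This yields $\om = d\eta$, and by reality I may take $\eta = \eta^{1,0} + \overline{\eta^{1,0}}$.

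Decomposing $\om = d\eta$ by type, the $(2,0)$-part gives $\pa\eta^{1,0} = 0$ and the $(1,1)$-part gives $\om = \bp\eta^{1,0} + \pa\overline{\eta^{1,0}}$. To integrate $\eta^{1,0}$ I would invoke an invariant $\pa$-Poincar\'e lemma: the holomorphic tangent directions of $X$ are modelled on $\fn^- + \fh$, and no invariant $(1,0)$-form has a root component, since the right $T$-action scales $\fl_{-\al}^*$ by a nontrivial character for every root $\al$. Hence an invariant $\pa$-closed $(1,0)$-form lives on the abelian directions and equals $\pa f$ for some invariant $f \in C^\infty(A,\bc)$, which on $\fa$ is merely the symmetry of mixed partials. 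Then, using $\bp\pa = -\pa\bp$,
\[
 \om = \bp\pa f + \pa\bp\overline{f} = \pa\bp(\overline{f} - f) .
\]
Because $\om$ is real, $\overline{f} - f$ is purely imaginary, so $F = -2\,\IM f \in C^\infty(A)$ is real and satisfies $\om = i\pa\bp F$, as required.

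The main obstacle is the exactness stage. Proposition \ref{zet} supplies only the constant-coefficient vanishing, so the genuine work is to reconcile the $A$-dependence of $\om$ with it, i.e. to justify that the $G \times T$-invariant cohomology of $X = GA$ collapses, via the contractibility of $A$, onto the Lie-algebra complex of Proposition \ref{zet}. Once an invariant primitive is in hand, the type decomposition and the invariant $\pa$-Poincar\'e lemma are routine, and the reality of $\om$ is exactly what makes the resulting potential $F$ real-valued.
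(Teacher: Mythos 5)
Your proposal is correct and takes essentially the same route as the paper's proof: both obtain an invariant primitive from the K\"unneth decomposition $\Om^\bullet(GA)^{G\times T}=\Om^\bullet(A)\otimes\we_\ft^\bullet\fg^*$ (contractibility of $A$ plus Proposition \ref{zet}), both use that invariant $1$-forms have no root components so they lie in $C^\infty(A)\otimes\fh^*$, and both integrate the type component of the primitive via symmetry of mixed partials on $\fa$ to produce the potential $F$. The only cosmetic difference is that you work with the $\pa$-closed $(1,0)$-part $\eta^{1,0}$ and invoke reality of $\om$ explicitly, whereas the paper works with the conjugate $(0,1)$-part $\psi$ with $\bp\psi=0$.
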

\begin{proof}
The $G \times T$-invariant forms on $GA$ are the
product of chain complexes,
\beq \Om^\bullet (GA)^{G \times T}
= \Om^\bullet(A) \otimes \Om^\bullet(G)^{G \times T}
= \Om^\bullet(A) \otimes \we_\ft^\bullet \fg^*  , \label{niti}
\eeq
where $\we_\ft^\bullet \fg^*$ is defined similarly as (\ref{mox}).
Since $\Om^\bullet(A)$ has trivial cohomology,
and since $H^2(\we_\ft^\bullet \fg^*) =0$ by Proposition \ref{zet},
the chain complex (\ref{niti}) at degree 2 gives
\beq
H^2(\Om^\bullet(GA)^{G \times T}) =0 .
\label{far}
\eeq

Given $f \in \fg^*$, it satisfies $\ad_v^* f =0$ for all $v \in \ft$
if and only if $f \in \ft^*$.
Hence $\we_\ft^1 \fg^* = \ft^*$.
The chain complex (\ref{niti}) at degree 1 gives
\beq \begin{array}{rl}
\Om^1 (GA)^{G \times T}
& = \Om^1(A) + C^\infty(A) \otimes \we_\ft^1 \fg^* \\
& = C^\infty(A) \otimes \fa^* + C^\infty(A) \otimes \ft^* \\
& = C^\infty(A) \otimes \fh^* .
\end{array}
\label{car}
\eeq

Let $\om$ be a
closed element of $\Om^{1,1} (GA)^{G \times T}$.
By (\ref{far}) and (\ref{car}), we can write
\beq
\om = d \be \;\;,\;\; \be \in C^\infty(A) \otimes \fh^* .
\label{kanx}
\eeq
Write $\be = \psi + \bar{\psi}$,
where $\psi \in C^\infty(A, \bc) \otimes \we^{0,1}\fh^*$.
Since $\om$ is a $(1,1)$-form, it follows that
$\bp \psi = 0$.
Let $z = x + iy$ be the holomorphic coordinates
of $H$, where $x$ and $y$ are the coordinates on $A$ and $T$
respectively. Write
$\psi = \sum_k f_k \, d \bar{z}_k$, where $f_k \in C^\infty(A,\bc)$.
Then
\beq
0 =  \bp \psi = \frac{1}{2} \sum_{j,k} \frac{\pa f_k}{\pa x_j} d \bar{z}_j
\we d \bar{z}_k .
\label{bagu}
\eeq
By (\ref{bagu}),
$\frac{\pa f_k}{\pa x_j} = \frac{\pa f_j}{\pa x_k}$ for all $j,k$.
Hence there exists $h \in C^\infty(A,\bc)$ such that $\frac{\pa h}{\pa x_k} = f_k$.
Let $F = -2i(h - \bar{h}) \in C^\infty(A)$.
Then
\[ i \pa \bp F = 2 \pa \bp(h - \bar{h}) = \pa \psi + \bp \bar{\psi}
= d (\psi + \bar{\psi}) = d \be = \om .\]
This proves the proposition.
\end{proof}

We recall the symplectic action and moment map for ordinary
symplectic manifold $(X_\zero,\om_\zero)$ \cite[\S26]{gs}.
Suppose that a Lie group $T$ acts on $X_\0$.
Originally, its moment map is written as $X_\zero \lra \ft^*$.
But it is more convenient to work with $i \ft^*$,
so we add $i$ and write
\begin{equation}
\Phi_\zero : X_\zero \lra i\ft^* .
\label{padi}
\end{equation}
We say that $\Phi_\zero$ is
$T$-equivariant if it intertwines the $T$-action on $X_\zero$
with the coadjoint action on $i\ft^*$.

We recall the construction of $\Phi_\0$.
Let $\xi \in \ft$.
Let $\xi^\sh$ denote the infinitesimal vector field on $X_\zero$,
given by $(\xi^\sh f)(x) = \frac{d}{dt}|_{t=0} f(e^{t \xi}x)$ for all
functions $f$ and $x \in X_\zero$.
Let $\imath(\xi^\sh)\om_\zero$ be the 1-form
given by $(\imath(\xi^\sh)\om_\zero)(v) =
\om_\zero(\xi^\sh,v)$
for all vector fields $v$.
We say that the $T$-action on $X_\zero$ has moment map
(\ref{padi}) if $\Phi_\0$ is $T$-equivariant and
\beq d(\Phi_\zero, \xi) = i \, \imath(\xi^\sh) \om_\zero .
\label{pqma}
\eeq
Here $(\Phi_\zero,\xi) \in C^\infty(X_\zero)$ denotes the function
$x \mapsto (\Phi_\zero(x))(\xi)$, so that $d(\Phi_\zero,\xi)$
is a 1-form on $X_\zero$ with imaginary values
(due to the factor $i$ in (\ref{padi})).

An action on a symplectic manifold may not have moment map.
An obstruction to (\ref{pqma})
is the cohomology $H^1(X_\zero)$, so for example the
ordinary 2-torus acting on itself
preserving the invariant volume form has no moment map.

A sufficient condition for (\ref{pqma})
is that $\om_\zero = d \be_\zero$,
and $\be_\zero$ is also $T$-invariant. In that case the moment map is given by
\[
(\Phi_\zero(x))(\xi) = - i\be_\zero(\xi^\sh)_x \;\;;\;\;
x \in X_\zero \;,\; \xi \in \ft .
\]

We now turn to the supermanifold $X$.
A symplectic form on $X$ is a closed nondegenerate 2-form,
and a pseudo-K\"ahler form is a symplectic $(1,1)$-form.
We want to define the
moment map of the $T$-action on the symplectic supermanifold $(X,\om)$,
\begin{equation}
\Phi: X \lra i\ft^* .
\label{padi-super}
\end{equation}
Denote, as usual
$\Phi^*:\cO(i\ft^*) \lra \cO(X)$ the map on the superalgebras
of global sections, induced by $\Phi$
(see \cite[Ch.4]{ccf}  \cite[Ch.6]{kessler}).
Let $(\Phi,\xi):=\Phi^*(\xi) \in \cO(X)$, $\xi \in \ft$, viewed as
an element of $\cO(i\ft^*)$. We say that $\Phi$ as in (\ref{padi-super})
is the \textit{moment map} if it is $T$-equivariant and
\beq\label{sup-mom}
d(\Phi, \xi) =
i \imath(\xi^\sh) \om .
\eeq

\begin{proposition}
\label{iste-supe}
Let $T$ be a Lie group acting on a supermanifold $X$.
Let  $\om = d \be $ be a symplectic form on $X$,
where $\be$ is $T$-invariant. Then
$(\Phi,\xi) = - i\be(\xi^\sh)$ defines a moment map.
\end{proposition}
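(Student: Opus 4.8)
The plan is to verify directly that the candidate function $(\Phi,\xi) = -i\be(\xi^\sh)$ satisfies the two defining conditions of a moment map, namely the differential equation \eqref{sup-mom} and $T$-equivariance. First I would establish \eqref{sup-mom} by applying the Cartan homotopy formula to the invariant potential $\be$. Since $\be$ is $T$-invariant, the Lie derivative $\cL_{\xi^\sh}\be$ vanishes for every $\xi \in \ft$. Writing Cartan's formula $\cL_{\xi^\sh} = d\,\imath(\xi^\sh) + \imath(\xi^\sh)\,d$ and feeding it $\be$, we obtain
\[
0 = \cL_{\xi^\sh}\be = d\big(\imath(\xi^\sh)\be\big) + \imath(\xi^\sh)(d\be)
= d\big(\be(\xi^\sh)\big) + \imath(\xi^\sh)\om .
\]
Multiplying by $-i$ gives $d\big(-i\be(\xi^\sh)\big) = i\,\imath(\xi^\sh)\om$, which is exactly \eqref{sup-mom} with $(\Phi,\xi) = -i\be(\xi^\sh)$. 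The main point to be careful about is that Cartan's formula, the exterior derivative, and the contraction $\imath(\xi^\sh)$ are all being used in the super setting; I would invoke the super de~Rham calculus of \cite[p.234]{cw} and the contraction conventions already fixed in Section~3, noting in particular the sign bookkeeping that arises because odd vectors commute under the wedge. Since $\xi \in \ft$ is purely even, $\xi^\sh$ is an even vector field and the Koszul signs do not obstruct the above computation.

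Second, I would verify $T$-equivariance of $\Phi$, i.e. that $\Phi$ intertwines the $T$-action on $X$ with the coadjoint action on $i\ft^*$. Because $T$ is abelian (it is the compact torus arising from the compact Cartan subalgebra $\ft$), the coadjoint action on $i\ft^*$ is trivial, so equivariance reduces to showing that each function $(\Phi,\xi) = -i\be(\xi^\sh)$ is $T$-invariant. This follows again from the $T$-invariance of $\be$: since $R_t$ preserves $\be$ and sends the fundamental vector field $\xi^\sh$ to the fundamental vector field of $\Ad_t\xi = \xi$ (abelian case), the pairing $\be(\xi^\sh)$ is unchanged under the $T$-action. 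In the SHCP formalism I would phrase this on the level of the induced map $\Phi^*$ on global sections, using that the $T$-action commutes with $d$ and with contraction by the (invariant) fundamental fields.

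The step I expect to be the genuine obstacle is the careful super-geometric justification of Cartan's magic formula and of the identity $\cL_{\xi^\sh}\be = 0$ on a supermanifold, rather than any of the algebra itself. In the ordinary setting these are standard, but here $\be \in \Om^1(X)$ is a form on a supermanifold and one must check that the homotopy formula $\cL = d\,\imath + \imath\,d$ holds with the correct parity-graded signs for an \emph{even} contracting field, and that $T$-invariance of $\be$ indeed forces $\cL_{\xi^\sh}\be=0$ as forms with values in the structure sheaf. Once these super-calculus identities are in place the proof is a one-line application, exactly as in the ordinary case recalled just before Proposition~\ref{iste-supe}.
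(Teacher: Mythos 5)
Your verification of (\ref{sup-mom}) is correct and follows essentially the same route as the paper: the paper's proof also consists of applying the Cartan homotopy formula $L_{\xi^\sh} = d\,\imath(\xi^\sh) + \imath(\xi^\sh)\,d$ to the invariant potential $\be$, citing \cite{kessler} for the super Lie derivative, exactly as you propose. Your sign bookkeeping is in fact the consistent one: invariance gives $d(\be(\xi^\sh)) = -\,\imath(\xi^\sh)\om$, which is what makes the prefactor $-i$ in $(\Phi,\xi) = -i\be(\xi^\sh)$ match (\ref{sup-mom}); the identity $d(\be(\xi^\sh)) = \imath(\xi^\sh)\om$ displayed in the paper's proof has a sign slip. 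You also check $T$-equivariance explicitly, which the paper's proof passes over in silence, so on the verification side your argument is if anything more complete.

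The one ingredient you omit is the only genuinely super-specific step in the paper's proof: the well-definedness of $\Phi$ itself as a morphism of supermanifolds $X \lra i\ft^*$. In supergeometry a map cannot be prescribed pointwise, and the assertion ``$(\Phi,\xi) = -i\be(\xi^\sh)$ defines a moment map'' includes the claim that these functions are the pullbacks $\Phi^*(\xi)$ of an actual morphism in the sense of (\ref{padi-super}). The paper settles this first, via the Chart theorem \cite[Thms.4.1.11,4.2.5]{ccf}: since the target $i\ft^*$ is an ordinary manifold, a morphism $X \lra i\ft^*$ is the same thing as a choice of ${\rm rk}(\ft)$ \emph{even} elements of $\cO(X)$, and $\be(\xi^\sh)$ is even because $\be$ is an even $1$-form and $\xi^\sh$ is an even vector field. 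Your proposal treats $\Phi$ as already existing and only verifies identities satisfied by the functions $(\Phi,\xi)$; adding this one observation closes the gap, and the rest of your argument stands. (A minor further remark: you reduce equivariance to the abelian case, which suffices for the paper's application where $T$ is the compact Cartan torus, but the proposition is stated for a general Lie group $T$; the same invariance argument, using that the action sends $\xi^\sh$ to $(\Ad_t\xi)^\sh$, gives $\Ad^*$-equivariance in general.)
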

\begin{proof}
We first observe that, since $i\ft^*$ is an ordinary manifold,
by the Chart theorem (see \cite[Thms.4.1.11,4.2.5]{ccf}),
$\Phi$ is determined by the choice of rk($\ft$) {\sl even} functions
in $\cO(X)$ the global sections on $X$. Since $\xi \in  \cO(i\ft^*)$
are generators, $(\Phi,\xi) = - i\be(\xi^\sh)$ defines a map $\Phi: X\lra i\ft$.
We now prove (\ref{sup-mom}). It
is the same as for the ordinary setting; we
apply the Lie derivative $L_{\xi^\sh} =
\imath(\xi^\sh)d + d \imath(\xi^\sh)$
to $\be$, obtaining $d (\be(\xi^\sh)) = \imath(\xi^\sh) \om$
(see for example
\cite{kessler}
for the Lie derivative in super setting).
\end{proof}

\begin{remark}\label{fopts}
Functor of points notation.

 {\rm
In what follows, to ease the notation, we shall employ the
functor of points notation for the moment map, writing
$\Phi(ga)$ for $g \in G(R)$ and $a \in A(R)$ for an arbitrary
supermanifold $R$. For more details see \cite[\S3.2]{ccf}.
Also, since $G$ acts on $GA$, we can define $g \cdot \Phi$ as
in \cite[p.35]{cfv}, via SHCP terminology.
} \end{remark}

\sk

\noindent {\it Proof of Theorem \ref{thm1}:}

The first statement of Theorem \ref{thm1} follows from Proposition \ref{kam}.
We now study the conditions for $\om$ to be nondegenerate.
Since $\om$ is $G$-invariant,
it suffices to consider $\om_a$ for $a \in A$.
By (\ref{kanx}), we have $\om = d \be$, where
$\be \in C^\infty(A) \otimes \fh^*$.
Since $d \fa^* = 0$, we may assume that
\beq
\be = \sum_k f_k \xi_k \in C^\infty(A) \otimes \ft^* \;\;,\;\;
 \om = d \be = \sum_k df_k \we \xi_k + \sum_k f_k d \xi_k ,
 \label{tiu}
 \eeq
 where $f_k \in C^\infty(A)$ and $\xi_k \in \ft^*$.

Since $\ft$ is a compact Cartan subalgebra of $\fg$, we have the
root space decomposition
\beq
 \fg = \ft + \sum_{\De^+} \fg_\al \;\;,\;\;
\fg_\al = \fg \cap (\fl_\al + \fl_{-\al}) . \label{haba}
\eeq
Write $\fg + \fa = \fh + V$, where $V = \sum_{\De^+} \fg_\al$.
In (\ref{tiu}), $\sum_k df_k \we \xi_k \in \Om^1(A) \otimes \ft^*$
annihilates $V$. Also, $\sum_k f_k d \xi_k$ annihilates $\fh$.
Hence for each $a \in A$, $\om_a$ is nondegenerate if and only if
the following restricted 2-forms are both nondegenerate,
\beq
\mbox{(a)} \; (\sum_k (df_k)_a \we \xi_k)|_\fh \;\;,\;\;
\mbox{(b)} \; (\sum_k f_k(a) d \xi_k)|_V .
\label{bntt}
\eeq

We identify $A$ with $\fa$, and $F$ with $\tf$, see Remark \ref{ide}.
Write $z= x+ iy$, where
$x$ and $y$ are linear coordinates on $\fa$ and $\ft$ respectively.
By Proposition \ref{kam} and (\ref{tiu}),
\begin{equation}
\be = \sum_k f_k \xi_k = \frac{1}{2} \sum_k \frac{\pa \tf}{\pa x_k} dy_k .
\label{polii}
\end{equation}
Thus (\ref{bntt})(a) is nondegenerate
for all $a \in A$
if and only if $\{f_k(a) \xi_k\}$ is a basis of $\ft^*$,
or equivalently $\sum_k f_k \xi_k : A \lra \ft^*$ is a local diffeomorphism,
namely the Hessian matrix of $\tf$
is nondegenerate everywhere.

Next we consider (\ref{bntt})(b).
Given a basis $v_\al , w_\al$ of $\fg_\al$, we have
$[v_\al,w_\al] = c t_\al \in \ft$ for some $c \in \br^\times$. Then
\beq
 (\sum_k f_k(a) d\xi_k)(v_\al, w_\al)
= (\sum_k f_k(a) \xi_k, [v_\al,w_\al])
= \be_a(c t_\al) .
\label{kka}
\eeq
So (\ref{kka})
is nonzero
for all $\al \in \De^+$
if and only if $\be_a \in \ft_{\rm reg}^*$,
or equivalently $\mbox{Im}(F') \in \fa_{\rm reg}^*$
due to (\ref{polii}).

We compute the moment map
$\Phi : X \lra i\ft^*$ of the right $T$-action.
Since $\om$ is $G$-invariant, we have
$\Phi(gx) = \Phi(x)$ (see Remark \ref{fopts}).
By Proposition \ref{iste-supe} and (\ref{polii}),
\beq
 (\Phi(x))(\xi) = - i(\be, \xi^\sh)(x) = \frac{i}{2} \sum_k \frac{\pa \tf}{\pa x_k} dy_k(\xi) =
\frac{1}{2} (F'(x))(\xi) .
\label{sima}
\eeq
Here we make use of $i \ft^* \cong \fa^*$ (see (\ref{paki})), as
 the images of $\Phi$ and $F'$ lie in $i \ft^*$ and $\fa^*$ respectively.
This proves Theorem \ref{thm1}.$\hfill$$\Box$


\newpage
\section{Geometric quantization}\label{geoqu-sec}
\setcounter{equation}{0}

In this section, we prove Theorem \ref{thm2}.
Let $\om = i \pa \bp F$ be a pseudo-K\"ahler form on $X = GA$
as given by Theorem \ref{thm1}, and suppose that $F$ is strictly convex.
Let $\omz$ be restriction of $\om$ to the ordinary manifold $\Gz A$.
We first recall geometric quantization on $\Gz A$ \cite{ko}.
Let $\blz$ be the complex line bundle on $\Gz A$ whose Chern class
is the cohomology class $[\omz]$.
Since $\om$ is exact, so is $\omz$,
hence $\blz$ is topologically a trivial line bundle.
It has a connection $\nabla$ whose curvature is $\omz$,
along with an invariant Hermitian structure $\langle \cdot, \cdot \rangle_\zero$.
Let $\cH(\blz)$ denote the holomorphic sections on $\blz$.
The $G_\zero \times T$-action on $G_\zero A$ lifts to a
$G_\zero \times T$-representation on $\cH(\blz)$.
There exists a $G_\zero \times T$-invariant section $u_\zero \in \cH(\blz)$
such that (see \cite[Prop.4.2]{jfa})
\begin{equation}
 \langle u_\zero , u_\zero \rangle_\zero = e^{-F} .
 \label{kec}
 \end{equation}

Since $X = GA$ is a Lie supergroup, it is globally split, so
$\cH(X)\cong \cH(X_\zero) \otimes \wedge(\xi)$. If
we fix such an isomorphism,
the line bundle $\blz$ on $\Gz A$ extends to the $1|0$ vector bundle
$\bl$ on $X$, (see \cite[Ch.4]{ccf}, \cite[Ch.6]{kessler})
and we can view
$\cH(\blz)$ as a subalgebra of $\cH(\bl)$,
the holomorphic sections on $\bl$.
The above section $u_\zero$ extends uniquely to
$u \in \cH(\bl)$.
Since $u_\zero$ is nowhere vanishing, so is $u$.
Therefore, it gives an identification
between $\cH(\bl)$ and the free ${1|0}$ module $\cH(X)$
of holomorphic functions on $X$, i.e. the
global sections of the structural sheaf on the complex supermanifold $X$:
\beq\label{identvb}
\cH(X)  \lra  \cH(\bl) \;,\;
f  \mapsto  f u  .
\eeq
By (\ref{kec}) and (\ref{identvb}), we extend the Hermitian structure of
$\cH(\blz)$ to a super Hermitian structure on $\cH(\bl)$ by
\beq
\langle f u,  g u \rangle := f \bar{g} \, e^{-F} ,
 \label{speci}
\eeq
where $f,g  \in \cH(X)$ have the same parity.

 We shall always let $u \in \cH(\bl)$ denote the above section.
The $G_\zero \times T$-action on $\cH(\blz)$ keeps $u_\zero$ invariant.
We extend this to a $G \times T$-action on $\cH(\bl)$ such that
$u$ is invariant.
In the same fashion as (\ref{act-gr}), we define an action of $G$ on $\cH(\bl)$
via functor of points; this amounts to an action of $G(\zR)$ on
\[
\begin{array}{l}
\cH(\bl)(\zR):= (\zR_\zero \otimes \cH(\bl)_\zero)
\oplus (\zR_\one \otimes \cH(\bl)_\one) , \\
g \cdot (r f u +\rho \phi u) :=  (g \cdot rf + g \cdot \rho \phi)u
\mbox{ where } rf \in \zR_\zero \otimes \cH(\bl)_\zero \;,\;
\rho \phi \in \zR_\one \otimes \cH(\bl)_\one .
\end{array}
\]

Let $\la \in i \ft^*$ be an integral weight, and define
$\cH(\bl)_\la$ with respect to the right $T$-action; see (\ref{puku}).
Let $W(\bl)_\la \subset \cH(\bl)_\la$ be smallest SHCP representation
of $G = (G_\0,\fg)$ in $\cH(\bl)_\la$ which contains the highest weight vector.
Consider
\[ \oplus_C W(\bl)_\la ,\]
summed over the integral weights $\la \in C$ of (\ref{vgood}).
Let $\mu_X = \mu_G \, \mu_A$ be the product of Haar super measures.
We define
\beq\label{hatt}
(s,t) =
\int_X \langle s,t \rangle \, \mu_X \;\;,\;\; s,t \in \oplus_C W(\bl)_\la .
\eeq
Similar to (\ref{hemij}), we can take the following completion because
(\ref{hatt}) is positive definite on the square integrable sections,
\[ W^2(\bl) = \mbox{completion of }
\{s \in \oplus_C W(\bl)_\la \;;\; (s,s) \mbox{ converges}\} .\]

In (\ref{hemij}), $W^2$ depends on a function $\psi$
of (\ref{hemi}).
Let us rewrite it as $W^2(e^{-F})$ to emphasize that $\psi = e^{-F}$.

\begin{proposition}
$W^2(\bl)$ is a unitary $G \times T$-representation, and $W^2(e^{-F}) \cong W^2(\bl)$
as $G \times T$-modules. \label{iaai}
\end{proposition}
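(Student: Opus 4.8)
The plan is to realize the multiplication-by-$u$ map of (\ref{identvb}) as an isometric $G \times T$-module isomorphism between the Section~2 construction (specialized to $\psi = e^{-F}$) and the line-bundle construction here, and then transport unitarity from Proposition \ref{veca}. First I would set $\Psi : \cH(X) \lra \cH(\bl)$, $f \mapsto fu$. Since $u$ is the extension of the $G_\zero \times T$-invariant section $u_\zero$ of the $1|0$ bundle $\blz$, it is even and $G \times T$-invariant, and the action on $\cH(\bl)$ was defined precisely so that $g \cdot (fu) = (g \cdot f)u$. Hence $\Psi$ is an even $G \times T$-equivariant bijection. Because $u$ has weight zero for the right $T$-action, $\Psi$ restricts to a bijection $\cH(X)_\la \lra \cH(\bl)_\la$ for each integral weight $\la$ and carries the highest weight vector of $\cH(X)_\la$ to that of $\cH(\bl)_\la$. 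Consequently $\Psi$ sends the smallest SHCP subrepresentation $W_\la$ of (\ref{fupi}) onto $W(\bl)_\la$, giving a $G \times T$-module isomorphism $\oplus_C W_\la \cong \oplus_C W(\bl)_\la$.

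Next I would verify that $\Psi$ is an isometry. By (\ref{speci}) we have $\langle \Psi(f), \Psi(h) \rangle = \langle fu, hu \rangle = f \bar{h} e^{-F}$, so integrating against $\mu_X$ and comparing (\ref{hemi}) (with $\psi = e^{-F}$) against (\ref{hatt}) yields $(\Psi(f), \Psi(h)) = (f,h)$. Thus $\Psi$ preserves the super Hermitian forms, in particular it matches the square-integrable subspaces of $\oplus_C W_\la$ and $\oplus_C W(\bl)_\la$, and descends to a $G \times T$-equivariant isometric isomorphism of completions $W^2(e^{-F}) \cong W^2(\bl)$. Since $W^2(e^{-F}) = W^2$ is a unitary $G \times T$-representation by Proposition \ref{veca}, transporting this structure through $\Psi$ shows $W^2(\bl)$ is unitary as well, establishing both assertions simultaneously.

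The substance of the argument has essentially been discharged already in Proposition \ref{veca}; the only genuine content here is the compatibility of $\Psi$ with both structures. The step requiring the most care is checking that $\Psi$ respects the super setting: that multiplication by the \emph{even} invariant section $u$ preserves the $\Z_2$-grading and the orthogonality between even and odd parts, so that "isometry'' is understood in the super Hilbert space sense of (\ref{shil}) and the unitarity condition (\ref{unita}) transports correctly through $\Psi$. Once this bookkeeping is in place, the identification $W^2(e^{-F}) \cong W^2(\bl)$ and the unitarity of $W^2(\bl)$ follow at once.
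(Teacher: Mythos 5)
Your proposal is correct and follows essentially the same route as the paper: the paper's proof also takes the trivialization $f \mapsto fu$, uses (\ref{hemi}), (\ref{speci}) and (\ref{hatt}) to see it is an isometry, invokes the $G \times T$-invariance of $u$ for equivariance, and transports unitarity from Proposition \ref{veca}. Your additional bookkeeping (weight-by-weight matching of $W_\la$ with $W(\bl)_\la$ and the parity/grading check) is a more explicit spelling-out of what the paper leaves implicit, not a different argument.
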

\begin{proof}
By (\ref{hemi}), (\ref{speci}) and (\ref{hatt}), for all $f \in W^2(e^{-F})$,
\[ (f,f) = \int_X f \bar{f} e^{-F} \, \mu_X
= \int_X \langle f u , f u \rangle \, \mu_X
= (fu,fu) . \]
The nowhere vanishing section $u$ is $G \times T$-invariant,
so the trivialization $f \mapsto fu$ is a $G \times T$-equivariant
isometry $W^2(e^{-F})\cong W^2(\bl)$.
By Proposition \ref{veca}, $W^2(e^{-F})$ is a unitary $G \times T$-representation,
and so is $W^2(\bl)$.
\end{proof}

\sk

\noindent {\it Proof of Theorem \ref{thm2}:}

In (\ref{muzz}), $\mu_X$ determines
a subset of the power set of $\{1,..., \dim \fg_\1\}$, denoted by $S$.
By Proposition \ref{iaai}, $W^2(\bl)$ is a unitary $G \times T$-representation, and
the study of $W^2(\bl)$ can be replaced by $W^2(e^{-F})$.
Let $\la \in C$, and we define $W_\la$ in (\ref{fupi}).
Let $0 \neq f = \sum_I f_I \xi^I \in W_\la$.
We have
\[
\begin{array}{rll}
f \in W^2(e^{-F})
& \Longleftrightarrow f_I \in \ch^2(G_\0 A, e^{-F}) \mbox{ for all }
I \in S^c
& \mbox{by (\ref{adam})} \\
& \Longleftrightarrow \la \in \im(\frac{1}{2}F')
& \mbox{by \cite[Thm.2]{jfa}} \\
& \Longleftrightarrow \la \in \im(\Phi)
& \mbox{by Theorem \ref{thm1}}.
\end{array}
\]
By \cite[Thm.1]{cfv}, $W_\la \cong \Theta_{\la+ \rho}$.
It follows that $W^2(\bl) \cong W^2(e^{-F})
\cong \sum_{\rm Im (\Phi)} \Theta_{\la + \rho}$.$\hfill$$\Box$


\newpage
\section{Cells}
\setcounter{equation}{0}

In this section, we prove Theorems \ref{thm3} and \ref{thm4}.
Recall that $\fl$ is a contragredient Lie superalgebra
with real form $\fg$, and it has an admissible positive system (\ref{howa})
so that $C \neq \emptyset$ in (\ref{vgood}).
Let $\Pi_c$ be the compact simple roots.
We have a correspondence between the subsets
$R \subset \Pi_c$ and the cells $\si \subset C$
defined in (\ref{sel}).
In this way,
$C$ is a disjoint union of all the cells.

Fix a cell $\si$, and equivalently $R \subset \Pi_c$.
For any root $\al$, let $\ker (\al) \subset \fh$ denote its kernel.
Let
\[ \fh_\si = \cap_R \ker(\al), \]
and by convention $\fh_\si = \fh$ if $R = \emptyset$.
Let $\fn = \sum_{\De^+} \fl_\al$, and $\fb = \fh + \fn$
is a Borel subalgebra.
Let $\overline{R} \subset \De^+$
be the positive roots that are non-negative linear combinations
of elements of $R$.
It determines a parabolic subalgebra
$\fp = \fb + \sum_{- \overline{R}} \fl_\al$.
Let $[\fp, \fp]$ be its commutator subalgebra.
Let $\fh_R = \fh \cap [\fp,\fp]$, and we have
\[ \fp = \fh + \sum_{\De^+ \cup (- \overline{R})} \fl_\al , \qquad
[\fp, \fp] = \fh_R + \sum_{\De^+ \cup (- \overline{R})}\fl_\al .\]

We have a direct sum $\fh = \fh_\si + \fh_R$.
By intersecting with $\ft$ and $\fa$,
we obtain
\beq
 \ft = \ft_\si + \ft_R , \qquad \fa = \fa_\si + \fa_R . \label{eapp}
 \eeq

Let $\fg^\si = \fg \cap \fp$, and it is the centralizer of $\si$ in $\fg$
under the coadjoint action.
Its commutator subalgebra
$\fg_{\rm ss}^\si = \fg \cap [\fp,\fp]$
is a semisimple ordinary Lie algebra (see proof of Proposition \ref{ranc}).
By the Iwasawa decomposition,
$\fl=\fg \oplus \fa \oplus \fn$.
We have $\fa_R , \fn \subset [\fp,\fp]$, so
\beq\label{lie-id}
\fl/[\fp,\fp] \cong
\fg/(\fg \cap [\fp,\fp]) \oplus \fa_\si
= \fg/\gss \oplus \fa_\sigma .
\eeq

The Lie algebra $\gss$ corresponds to Lie subgroup $G_{\rm ss}^\si \subset G_\0$.
We are interested in the space
\[
 X_\si = G/G_{\rm ss}^\si \times A_\si \;\;,\;\;
 \xs = G_\0/G_{\rm ss}^\si \times A_\si .
 \]
By (\ref{lie-id}), the map
\beq
X_\si=G/G_{\rm ss}^\si \times A_\si  \hookrightarrow L/(P,P) .
\label{komp}
\eeq
is a super diffeomorphism at each point as the tangent spaces
are isomorphic, and it is an ordinary diffeomorphism of
$\xs$ onto its open image in
$(L/(P,P))_{\bar{0}}$.
Hence $X_\si$ is diffeomorphic to an open subsupermanifold in $L/(P,P)$.
In this way, $X_\si$ inherits a natural
complex structure.

In the special case where $\si$ is the interior of $C$, we have
$R=\emptyset$, $\fh_\si = \fh$, $P =B$, $(P,P) = N$,
$G^\si = T$ and $G_{\rm ss}^\si = \{e\}$, so $X_\si$ is just $GA$
discussed previously.

We have a $G \times H_\si$-action on $X_\si$, as
$T_\si$ normalizes $G_{\rm ss}^\si$.
Similar to (\ref{motiva}), (\ref{mox}) and (\ref{niti}), and using
the complex structure inherited from (\ref{komp}), we have
\beq \Om^{p,q}(X_\si)^{G \times T_\si}
= C^\infty (A_\si) \otimes \wedge_{\ft_\si}^{p,q}(\fg + \fas,  \gss)^* ,
\label{emyy}
\eeq
where
\[ \begin{array}{l}
\wedge_{\ft_\si}^{p,q}(\fg + \fas,  \gss)^* =
\{\om \in \we^{p,q} (\fg + \fas)^* \;;\; \\
\ad_u^* \om  = 0 \mbox{ for all } u \in \ft_\si
\mbox{ and } \ad_v^* \om = \imath(v) \om =0 \mbox{ for all } v \in \fg_{\rm ss}^\si \} .
\end{array}
\]
The Dolbeault operators $\partial$ and $\bar{\partial}$ raise the degrees of
$p$ and $q$ respectively.

By (\ref{eapp}), we have the Cartesian product $A = A_\si A_R$, so we can identify
$C^\infty(A_\si)$ with the $A_R$-invariant elements of $C^\infty(A)$. In this way
we have the subspace
\begin{equation}
 C^\infty (A_\si) \otimes \wedge_{\ft_\si}^{p,q}(\fg + \fas,  \gss)^*
\subset C^\infty(A) \otimes \wedge_\ft^{p,q}(\fg+\fa)^* ,
\label{thth}
\end{equation}
where the latter is discussed in Section 3.
The natural fibration $L/N \lra L/(P,P)$ leads to the
inclusion map in (\ref{thth}).

\sk

\noindent {\it Proof of Theorem \ref{thm3}:}

Let $\om \in \Om^{1,1}(X_\si)^{G \times T_\si}$ be a closed element.
By (\ref{emyy}) and (\ref{thth}),
we treat it as an element of $C^\infty(A) \otimes \wedge_\ft^{p,q}(\fg+\fa)^*$.
So by Theorem \ref{thm1}, we obtain
$\om = i \partial \bar{\partial}F$ where $F \in C^\infty(A)$.
But since $\om$ belongs to the subspace in (\ref{thth}), we have
$F \in C^\infty(A_\si)$.
We identify it with $\tf \in C^\infty(\fa_\si)$ by Remark \ref{ide}.

Let $x_1,...,x_p$ and $y_1,...,y_p$ be the linear coordinates on
$\fa_\si$ and $\ft_\si$ respectively.
Let
\[ \fg / \gss + \fa_\si = \fh_\si + V_\si \;\;,\;\;
\mbox{where } V_\si = \fg \cap \sum_{\De^+ \bsl \overline{R}} (\fl_\al + \fl_{-\al}) .\]
Similar to (\ref{bntt}), $\om_a$ is nondegenerate
if and only if
\beq
\om|_{\fh_\si} =
\frac{1}{2} \sum_{i,k} \frac{\partial^2 \tf}{\partial x_i \partial x_k} dx_j\we \xi_k
\;\;,\;\;
\om|_{V_\si} = \frac{1}{2} \sum_k \frac{\partial \tf}{\partial x_k} d \xi_k
\label{shiq}
\eeq
are both nondegenerate.
We follow the proof of Theorem \ref{thm1}.
The first term of (\ref{shiq}) is nondegenerate
if and only if the Hessian matrix of $\tf$
is nondegenerate
everywhere.
The second term of (\ref{shiq}) is nondegenerate
if and only if $F'(a) \in (\fa_\si^*)_{\rm reg}$.
Similar to (\ref{sima}),
the moment map is $\Phi_\si(ga) = \frac{1}{2} F'(a)$
 (see notation
via the functor of points in Remark \ref{fopts} and also \cite[3.2]{ccf}).
This proves Theorem \ref{thm3}.
$\hfill$$\Box$

\sk

Let $\om = i \partial \bar{\partial}F$ be given by Theorem 1.3,
and suppose that $F$ is strictly convex.
Let $\om_\zero$ be its restriction to
$(X_\si)_\0 = (G_\0/G_{\rm ss}^\si)A_\si$.
Let $\blz^\sigma$ be the complex line bundle on $(X_\si)_\zero$,
whose Chern class is the cohomology class $[\omz] =0$.
It has an invariant Hermitian structure $\langle \cdot , \cdot \rangle_{\bar{0}}$,
and there exists a nowhere vanishing
$G_\zero \times T_\si$-invariant holomorphic section
$u_\zero^\si$ such that
\[
\langle u_\zero^\si , u_\zero^\si \rangle_{\bar{0}} = e^{-F} .
\]
See \cite[Prop.4.2]{jfa}. We let $u_\zero^\si$ denote this specific section,
and later $u^\si$ its extension.

The line bundle $\blz^\si$ extends
to the $1|0$ vector bundle $\bl^\si$ on $X_\si$, and
$u_\zero^\si$ extends uniquely to
$u^\si \in \cH(\bl^\si)$, similarly to the corresponding result in Section 4
regarding $\blz$, $\bl$ and $u$.
Since $u_\zero^\si$ is nowhere vanishing, so is $u^\si$. We have an identification
between $\cH(\bl^\si)$ and the free ${1|0}$ module $\cH(X_\si)$
of holomorphic functions on $X_\si$:
\beq
\cH(X_\si)  \lra  \cH(\bl^\si) \;,\;
f  \mapsto  f u^\si .
\label{uumn}
\eeq

We define, both on $\cH(X_\si)$ and $\cH(\bl^\si)$, an action
of $G \times T_\si$ via SHCP, similarly to what we did in Section 4
for $G \times T$. The action of $G$
on $\cH(X_\si)$ is via left invariant vector fields (see \cite[Sec.8.3]{ccf}).
On $\cH(\bl^\si)$ we have:
$$
g_\zero \cdot fu^\si :=
(g_\zero \cdot f)u^\si , \qquad
X \cdot f u^\si =
(X \cdot f)u^\si , \qquad g \in G_\zero, \, X\in \fg .
$$
We also have a right $T_\sigma$-action $R$
 on  $\cH(X_\si)$ given by the dual of
right translation. On $\cH(\bl^\si)$, we have
$R_t (fu^\si) = (R_t f)u^\si$ for all
$t \in T_\si$ and $f \in \cH(X_\si)$.
In this way, $u^\si$ is $G \times T_\si$-invariant. The $G$ and $T_\si$
actions commute.

For $\la \in \si$, we define
$W(\bl^\si)_\la$ (resp. $W(X_\si)_\la$)
to be the irreducible submodule of $\ch(\bl^\si)_\la$ (resp. $\ch(X_\si)_\la$)
which contains the highest weight vector.

\begin{proposition}
Let $\la \in \si$. \label{ranc}

\noindent{\rm (a)} \, $W(\bl^\si)_\la \cong W(X_\si)_\la \cong \Theta_{\la + \rho}$,

\noindent{\rm (b)} \, $X_\si$ has a $G \times H_\si$-invariant measure of the form
$\mu_{X_\si} = \mu_{\xs} \mu_{G_\1}$,
where $\mu_{\xs}$ is a
$G_\0 \times H_\si$-invariant measure of $\xs$,
and $\mu_{G_\1}$ is a $G$-invariant measure
of $G_\1 = G/G_\0$.
\end{proposition}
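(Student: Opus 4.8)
The plan is to prove the two assertions of Proposition \ref{ranc} by reducing each to facts already available for the base case $\si = C^\circ$ (where $X_\si = GA$), using the fibration $L/N \lra L/(P,P)$ and the structure of the parabolic $\fp$. First I would record the semisimplicity of $\gss$ (promised earlier in the text), since it is needed throughout. Using $[\fp,\fp] = \fh_R + \sum_{\De^+\cup(-\overline{R})}\fl_\al$ together with $\fg = \ft + \sum_{\De^+}\fg_\al$ and $\fg_\al = \fg\cap(\fl_\al+\fl_{-\al})$, an even root vector $\fg_\al$ lies in $[\fp,\fp]$ exactly when both $\fl_{\pm\al}$ do, i.e. when $\al\in\overline{R}$; with the Cartan part $\ft_R=\ft\cap\fh_R$ of (\ref{eapp}) this gives $\gss = \ft_R + \sum_{\al\in\overline{R}}\fg_\al$. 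Since $R\subset\Pi_c$, every root in $\overline{R}$ is compact and hence even, so $\gss$ is an ordinary Lie algebra, namely the semisimple subalgebra attached to the closed sub-root-system $\overline{R}\cup(-\overline{R})$ with simple roots $R$ (semisimplicity follows from the $\fsl(2)$-triple presentation of $R$, equivalently from nondegeneracy of $B|_{\gss}$), and it is compact because its roots are. In particular $G_{\rm ss}^\si$ is compact, hence unimodular.

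For part (a), the isomorphism $W(\bl^\si)_\la \cong W(X_\si)_\la$ is immediate from the trivialization (\ref{uumn}): since $u^\si$ is nowhere vanishing and $G\times T_\si$-invariant, $f\mapsto fu^\si$ is a $G\times T_\si$-equivariant isomorphism $\cH(X_\si)\to\cH(\bl^\si)$ carrying highest weight vectors to highest weight vectors, so the submodules they generate correspond. For the identification $W(X_\si)_\la\cong\Theta_{\la+\rho}$, I would use the $L$-equivariant fibration $L/N\lra L/(P,P)$, which (as for forms after (\ref{thth})) realizes $\cH(X_\si)$ as the right-$(P,P)$-invariant elements of $\cH(X)$; this inclusion is equivariant for the left $\fg$-action and for the right $T_\si$-action, since left and right multiplications commute. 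By \cite[Thm.1]{cfv} the highest weight vector of $W_\la\cong\Theta_{\la+\rho}$ is realized by a function $f_\la\in\cH(X)_\la$.

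The crux of (a) is the descent step: that $f_\la$ is in fact right-$(P,P)$-invariant, hence lies in $\cH(X_\si)_\la$. It is right-$N$-invariant by construction; right-$\fh_R$-invariance is exactly the cell condition $\la(h_\al)=0$ for $\al\in R$; and right-$\fl_{-\al}$-invariance for $\al\in R$ follows from an $\fsl(2)_\al$-argument in the \emph{right} regular action, namely right-$\fl_\al$ annihilates $f_\la$ (as $\fl_\al\subset\fn$), right-$h_\al$ acts by $\la(h_\al)=0$, and $\al$ compact means the triple integrates to $SU(2)$, forcing right-$\fl_{-\al}$ to annihilate $f_\la$ as well. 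As $R$ generates the lower part of $[\fp,\fp]$, we conclude $f_\la\in\cH(X_\si)$. Since the inclusion is a $\fg$-map and $W_\la=U(\fg)f_\la$, the entire module descends, giving $W(X_\si)_\la=W_\la\cong\Theta_{\la+\rho}$.

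For part (b) I would mirror Proposition \ref{ffub}. The odd factor is handled as in (\ref{petm}): $G_\1=G/G_\0$ carries the $G$-invariant measure $\mu_{G_\1}$ because $G_\0$ is reductive. For the even part $\xs = G_\0/G_{\rm ss}^\si\times A_\si$, both $G_\0$ (reductive) and $G_{\rm ss}^\si$ (compact) are unimodular, so \cite[Prop.8.36]{kn} gives a left-$G_\0$-invariant measure on $G_\0/G_{\rm ss}^\si$; tensoring with Haar measure on the abelian $A_\si$ yields $\mu_{\xs}$. Right $H_\si=T_\si A_\si$-invariance is free: right $A_\si$ preserves its Haar measure, and right translation by the compact $T_\si$ (which normalizes $G_{\rm ss}^\si$) scales the essentially unique invariant measure by a homomorphism $T_\si\to\br^+$, necessarily trivial. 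Finally $\mu_{X_\si}=\mu_{\xs}\,\mu_{G_\1}$ is $G\times H_\si$-invariant, the full $\fg$-invariance along the odd directions following verbatim from the computation (\ref{trrr})--(\ref{petm}) with $U=G_{\rm ss}^\si$. I expect the \emph{descent step in part (a)} to be the main obstacle, since it requires both the cell condition on $\fh_R$ and the compact-$\fsl(2)$ argument on the right action to kill the lowering operators $\fl_{-\al}$; part (b) is bookkeeping once Proposition \ref{ffub} and the unimodularity of $G_{\rm ss}^\si$ are in place.
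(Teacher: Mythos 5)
Your part (b) and the first isomorphism of part (a) essentially coincide with the paper's own proof: the paper likewise gets $W(\bl^\si)_\la \cong W(X_\si)_\la$ from the trivialization (\ref{uumn}), derives the root description $\gss = \ft_R + \sum_{\overline{R}}\fg_\al$ with $\overline{R}$ free of odd roots, concludes unimodularity, applies Proposition \ref{ffub} with $U = G_{\rm ss}^\si$, and uses compactness of $T_\si$ for the right invariance, exactly as you do (your stronger observation that $\gss$ is compact is correct but unnecessary; semisimple $\Rightarrow$ unimodular suffices). The genuine divergence is the identification $W(X_\si)_\la \cong \Theta_{\la+\rho}$. The paper argues softly: $\pi^*\colon \cH(X_\si)_\la \hra \cH(X)_\la$ is an injective $G$-map, $W(X)_\la \cong \Theta_{\la+\rho}$ is irreducible by \cite{cfv}, so the image of $W(X_\si)_\la$ is either $0$ or all of $\Theta_{\la+\rho}$, and it is nonzero because the ordinary $\cH((X_\si)_\0)_\la$ is a nonzero discrete series representation \cite{hc-v}. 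You instead descend the highest weight vector $f_\la$ explicitly, which would make the discrete-series input unnecessary — but it puts all the weight on your $\fsl(2)_\al$ step, which you rightly flag as the crux, and that is exactly where the proposal breaks.

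The claim that ``$\al$ compact means the triple integrates to $SU(2)$, forcing right-$\fl_{-\al}$ to annihilate $f_\la$'' is unsupported: integrates acting on \emph{what} space? Right translation on $L/N$ is defined only for the normalizer of $N$, and $K_\al \cong SU(2)$ does not normalize $N$; moreover the left-invariant vector fields attached to $\fl_{-\al}$ do not preserve the space of right-$N$-invariant functions on $GAN$, so no integrable right $\fsu(2)_\al$-action on a function space containing $f_\la$ has been exhibited. Some such hypothesis is indispensable, since the bare implication fails: in the Verma module of highest weight $0$, the highest weight vector is killed by $e$ and has $h$-weight $0$, yet is not killed by $f$. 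What actually makes your descent true is not compactness of $\al$ but its \emph{simplicity} together with irreducibility: via the realization of \cite{cfv}, the right $U(\fl)$-span of $f_\la$ is a nonzero homomorphic image of the irreducible module $\Theta_{\la+\rho}$, with $f_\la$ corresponding to the highest weight vector $v_\la$; and in an irreducible highest weight module with $\la(h_\al)=0$ and $\al$ simple, the vector $f_{-\al}v_\la$ is singular (annihilated by all raising operators), hence zero. With that repair — or by simply adopting the paper's soft argument — the remaining steps of your outline (right-$N$-invariance, right-$\fh_R$-invariance from $\la|_{\fh_R}=0$, bracket generation of $[\fp,\fp]$ from $R$, and descent of $W_\la = U(\fg)f_\la$) go through.
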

\begin{proof}
The trivialization (\ref{uumn})
implies that $W(\bl^\si)_\la \cong W(X_\si)_\la$.
Next we show that $W(X_\si)_\la \cong \Theta_{\la + \rho}$.
The fibration $\pi : X \lra X_\si$ is $G \times T_\si$-equivariant,
so it leads to an injection of $G$-modules,
$\pi^* : \cH(X_\si)_\la \hookrightarrow \cH(X)_\la$.
By \cite[Thm.2.12,4.27]{cfv},
we have $W(X)_\la \cong \Theta_{\la + \rho}$,  and it is irreducible.
Hence its $G$-submodule $\pi^*(W(X_\si)_\la)$ is either 0 or $\Theta_{\la + \rho}$.
In the ordinary setting, $\cH((X_\si)_\zero)_\la$ is a discrete series representation of
$G_\zero$, so it is not 0 \cite{hc-v}. It follows that
$W(X_\si)_\la \cong \Theta_{\la + \rho}$.
This proves part (a).

For part (b), we first recall the root space decomposition
$\fg = \ft + \sum_{\De^+} \fg_\al$ in (\ref{haba}).
By (\ref{hccone}) and the discussions preceding (\ref{lie-id}),
\beq
\fg^\si = \ft + \sum_{\overline{R}} \fg_\al \;,\;
\fg_{\rm ss}^\si = \ft_R + \sum_{\overline{R}} \fg_\al ,
\label{dexx}
\eeq
where $\overline{R}$ consists of the positive roots $\al$ such that
$\la(h_\al) = 0$ for all $\la \in C$.
All $\la \in C$ and $\be \in \De_\1^+$ satisfy $\la(h_\be) < 0$,
so $\overline{R}$ does not contain any odd root.
Hence (\ref{dexx}) implies that:
$\fg^\si , \fg_{\rm ss}^\si \subset \fg_\0$,
and $\fg^\si$ is reductive, and $\fg_{\rm ss}^\si$ is semisimple.
 By Example \ref{unim}, semisimple and reductive Lie groups are
 unimodular, so
 \beq
G_{\rm ss}^\si \mbox{ and } G^\si \mbox{ are unimodular Lie groups.}
\label{gunak}
 \eeq
We apply Proposition \ref{ffub} with $U = G_{\rm ss}^\si$
and obtain a $G$-invariant measure of $G/G_{\rm ss}^\si$ given by
$\mu_{G/G_{\rm ss}^\si} = \mu_{G_\0/G_{\rm ss}^\si} \mu_{G_\1}$,
where $\mu_{G_\0/G_{\rm ss}^\si}$ is a $G_\0$-invariant measure of
$G_\0/G_{\rm ss}^\si$, and $\mu_{G_\1}$ is a $G$-invariant measure
of $G_\1 = G/G_\0$.
Since $T_\si$ is compact,
$\mu_{G_\0/G_{\rm ss}^\si}$ is automatically $G_\0 \times T_\si$-invariant.
We take the product of $\mu_{G/G_{\rm ss}^\si}$ with the Haar measure
of $A_\si$ and obtain the desired result of part (b).
 \end{proof}

Consider $\oplus_\si W(\bl^\si)_\la$ and $\oplus_\si W(X_\si)_\la$,
summed over all integral weights $\la \in \si$.
Let
\beq\label{hatsi}
( s,t ) :=
\int_{X_\si} \langle s,t \rangle \, \mu_{X_\si}  \;,\;
s,t \in \oplus_\si W(\bl^\si)_\la ,
\eeq
and
\beq
(f,h) = \int_{X_\si} f \bar{h} e^{-F} \, \mu_{X_\si} \;,\; f,h \in \oplus_\si W(X_\si)_\la .
\label{hemik}
\eeq
We shall show that (\ref{hatsi}) and (\ref{hemik}) provide
positive definite Hermitian metrics on the square integrable elements,
so we can define their completions
$W^2(\bl^\si)$ and $W^2(X_\si , e^{-F})$ respectively.

\begin{proposition}
 $W^2(\bl^\si)$ and $W^2(X_\si , e^{-F})$ are equivalent
 unitary $G \times T_\si$-modules with respect to
 the positive definite super Hermitian metrics (\ref{hatsi})
 and (\ref{hemik}) respectively.
 \label{nchu}
\end{proposition}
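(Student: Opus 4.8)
The plan is to mirror the proofs of Propositions \ref{veca} and \ref{iaai}, replacing the data attached to $X = GA$ by their cell analogues. First I would dispose of the equivalence $W^2(\bl^\si) \cong W^2(X_\si, e^{-F})$ exactly as in Proposition \ref{iaai}. Since $u^\si$ is nowhere vanishing, $G \times T_\si$-invariant and satisfies $\langle u^\si, u^\si \rangle = e^{-F}$, the trivialization $f \mapsto f u^\si$ of (\ref{uumn}) carries the metric (\ref{hemik}) to (\ref{hatsi}): for $f,h \in \oplus_\si W(X_\si)_\la$ one has $\int_{X_\si} \langle f u^\si, h u^\si \rangle \, \mu_{X_\si} = \int_{X_\si} f \bar{h}\, e^{-F} \mu_{X_\si}$. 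As $u^\si$ is invariant this is a $G \times T_\si$-equivariant isometry, so it suffices to prove that (\ref{hemik}) is positive definite on the square integrable elements and that the resulting completion $W^2(X_\si, e^{-F})$ is a unitary $G \times T_\si$-module.

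For positivity I would run the four-step scheme (\ref{pdu}). The role of the measure decomposition (\ref{muzz}) is now played by Proposition \ref{ranc}(b), which gives $\mu_{X_\si} = \mu_{\xs}\, \mu_{G_\1}$ with $\mu_{\xs}$ a positive $G_\0 \times T_\si$-invariant measure on $\xs$ and $\mu_{G_\1}$ the odd measure on $G_\1 = G/G_\0$. Because $G_{\rm ss}^\si \subset G_\0$ the odd directions are literally those of the $X = GA$ case, so Berezin integration against $\mu_{G_\1}$ reduces the norm of $f = \sum_I f_I \xi^I$ to $\sum_{I \in S^c} c_I \int_{\xs} f_I \bar{f}_I\, e^{-F} \mu_{\xs}$ with $c_I \neq 0$, exactly as in (\ref{leng}); this gives the convergence criterion analogous to (\ref{adam}), namely that $f$ is square integrable iff each component lies in $\cH^2(\xs, e^{-F})$, establishing step (a). Invariance of (\ref{hemik}) under $G \times T_\si$, step (b), follows as in (\ref{agek}) from the super Haar measure property together with the unimodularity of $G_\0$ and the compactness of $T_\si$.

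For step (c) I would separate distinct summands $W(X_\si)_\la$ by the $T_\si$-character argument of (\ref{ikut}); the point where the cell geometry enters is that distinct integral weights in $\si$ lie in $\fa_\si^* \cong i \ft_\si^*$ and hence have distinct $T_\si$-characters, so the smaller torus already separates them. Orthogonality of distinct $\ft$-weight spaces inside a fixed $W(X_\si)_\la$ is the same argument applied to the $\ft$-action coming from the $\fg$-module structure, and uniqueness of the $G$-invariant Hermitian form (step (d)) is the highest-weight generation argument (\ref{nkes}) verbatim. To conclude, I would invoke Proposition \ref{ranc}(a) to identify $W(X_\si)_\la \cong \Theta_{\la + \rho}$, so Proposition \ref{eeej} makes it unitarizable (for $\fg$ in (\ref{goodlist})); combined with step (d), the invariant form is a scalar multiple of the unique positive definite one, and the scalar is forced positive just as in the last paragraph of Proposition \ref{veca}, since $\int_{\xs} f_I \bar{f}_I\, e^{-F} \mu_{\xs} > 0$ on ordinary body functions ($\mu_{\xs}$ positive, $e^{-F} > 0$). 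Completing and transporting through the isometry of the first paragraph yields both the unitarity of $W^2(\bl^\si)$ and the asserted equivalence.

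The main obstacle is the positive-definiteness step, not the formal manipulations: everything hinges on knowing that each $W(X_\si)_\la$ is an honest unitarizable module, which is the deep input supplied by Proposition \ref{ranc}(a) together with \ref{eeej}, and on correctly pinning the overall sign of the unique invariant form using positivity on the body. Once these are secured, the passage from the full torus $T$ to $T_\si$ and from $G$ to the reductive quotient $G/G_{\rm ss}^\si$ is harmless, since Proposition \ref{ranc}(b) already guarantees the invariant positive measure on $\xs$ and the odd structure is unchanged; the remaining steps are a faithful transcription of the $X = GA$ argument.
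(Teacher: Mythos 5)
Your proposal is correct and follows essentially the same route as the paper's proof: the $G \times T_\si$-equivariant isometry $f \mapsto f u^\si$ reducing everything to $W^2(X_\si, e^{-F})$, the four-step scheme with Proposition \ref{ranc}(b) supplying the measure decomposition $\mu_{X_\si} = \mu_{\xs}\,\mu_{G_\1}$ and the Berezin reduction of norms, and Proposition \ref{ranc}(a) together with Proposition \ref{eeej} plus positivity on body functions to pin down the sign of the unique invariant form. Your added remark that the smaller torus $T_\si$ already separates the weights in $\si \subset i\ft_\si^*$ is a correct (and implicitly needed) detail that the paper leaves tacit when it declares steps (c),(d) identical to the earlier case.
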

\begin{proof}
Similar to Proposition \ref{iaai}, the square integrable elements of
(\ref{hatsi}) and (\ref{hemik}) are identified by
$f u^\si \mapsto f$, where $u^\si$ is the section in (\ref{uumn}).
It remains to show that $W^2(X_\si ,e^{-F})$ is a unitary $G \times T_\si$-representation
with respect to (\ref{hemik}). Let
\[ (\oplus_\si W(X_\si)_\la)^2 =
\{f \in \oplus_\si W(X_\si)_\la \;;\; (f,f)
\mbox{ converges}\} .\]
The arguments resemble Proposition \ref{veca}, namely we check
the following four steps:
\beq
 \begin{array}{cl}
\mbox{(a)} & (\oplus_\si W(X_\si)_\la)^2 \mbox{ is a super vector space}, \\
\mbox{(b)} & \mbox{the $G \times T_\si$-action on $W^2(X_\si ,e^{-F})$ preserves (\ref{hemik})},\\
\mbox{(c)} & \mbox{for any $G \times T_\si$-invariant super Hermitian form $H$ on
$W(X_\si)_\la \oplus W(X_\si)_\nu$}, \\
& \mbox{$H(W(X_\si)_\la,W(X_\si)_\nu) = H(W(X_\si)_\la^a, W(X_\si)_\la^b) = 0$
for $\la \neq \nu$ and $a \neq b$,} \\
\mbox{(d)} & \mbox{the $G$-invariant Hermitian forms on each $W(X_\si)_\la$ are unique
up to scalar.}
\end{array}
\label{pdv}
\eeq
Since the arguments are similar to (\ref{pdu}), we merely sketch the ideas here.

Let $P$ be the power set of $\{1,...,\dim \fg_\1\}$.
By Proposition \ref{ranc}(b), we have
\beq
 \mu_{X_\si} = \mu_{\xs} \mu_{G_\1}
= \mu_{\xs} (\sum_{K \in S} a_K \xi^K \bar{\xi}^K) \label{tyaa}
\eeq
for some $S \subset P$ and non-zero constants $\{a_K\}_{K \in S}$.
Let $f = \sum_{I \in P} f_I \xi^I \in \oplus_\si W(X_\si)_\la$.
Similar to (\ref{sxff}) and (\ref{leng}), we have
\beq
(f,f)
 = \int_{G_\1} \int_{\xs} f \bar{f} e^{-F} \,
\mu_{\xs} \, \mu_{G_\1}
= \sum_{I \in S^c} c_I \int_{\xs} f_I \bar{f}_I e^{-F} \,
\mu_{\xs} ,
\label{attg}
\eeq
where $c_I = \int_{G_\1} a_{I^c} \xi^I \bar{\xi}^I \xi^{I^c} \bar{\xi}^{I^c} \neq 0$
for all $I \in S^c$.
It follows that $f \in (\oplus_\si W(X_\si)_\la)^2$
if and only if $f_I \in \ch^2 (\xs, e^{-F})$
for all $I \in S^c$. Since the latter is a vector space,
so is the former. This proves (\ref{pdv})(a).

Similar to the arguments of
(\ref{agek}), the $G$-action preserves (\ref{hemik}).
Since $\mu_{G_\0/G_{\rm ss}^\si}$ is right $T_\si$-invariant,
the right $T_\si$-action also preserves (\ref{hemik}). This proves (\ref{pdv})(b).
The proofs of (\ref{pdv})(c,d) are identical to (\ref{pdu})(c,d).
This proves (\ref{pdv}).

Let $H$ be the Hermitian form (\ref{hemik}).
By (\ref{pdv})(b), the $G$-action preserves $H$,
so $(\oplus_\si W(X_\si)_\la)^2 = \oplus_\tau W(X_\si)_\la$
for some $\tau \subset \si$.
Let $\la \in \tau$.
By Proposition \ref{eeej} and (\ref{pdv})(d),
there exists $c \in \bc$ such that $cH$
is positive definite on the unitarizable $\fg$-module $W(X_\si)_\la$.
But by (\ref{attg}), we have $H(f,f) > 0$
for all ordinary non-zero functions $f$,
so $c>0$, and $H$ is already positive definite on $W(X_\si)_\la$.
By (\ref{pdv})(c), distinct summands of $\oplus_\tau W(X_\si)_\la$
are orthogonal,
so $H$ is positive definite on  $\oplus_\tau W(X_\si)_\la$.
We take its completion with respect to $H$,
and obtain the unitary representation $W^2(X_\si ,e^{-F})$.
\end{proof}

\sk

\noindent {\it Proof of Theorem \ref{thm4}:}

By Proposition \ref{nchu}, we may consider $W^2(X_\si, e^{-F})$ in place of $W^2(\bl^\si)$.
Let $S$ be the index set in (\ref{tyaa}). Let $\la \in \si$.
Given $0 \neq f = \sum_I f_I \xi^I \in W(X_\si)_\la$, we have
\[
\begin{array}{rll}
f \in W^2(X_\si, e^{-F}) & \Longleftrightarrow
f_I \in \ch^2((X_\si)_\0 , e^{-F}) \mbox{ for all } I \in S^c
& \mbox{by (\ref{attg})} \\
& \Longleftrightarrow \la \in \mbox{Im}(\frac{1}{2} F')
& \mbox{by \cite[Thm.2]{jfa}} \\
& \Longleftrightarrow \la \in \mbox{Im}(\Phi_\si)
& \mbox{by Theorem \ref{thm3}.}
\end{array}
\]
By Proposition \ref{ranc}(a), $W(X_\si)_\la \cong \Theta_{\la + \rho}$, so
$W^2(\bl^\si) \cong W^2(X_\si, e^{-F}) \cong \sum_{\im(\Phi_\si)} \Theta_{\la + \rho}$.
This proves Theorem \ref{thm4}.$\hfill$$\Box$


\newpage
\section{Models}
\setcounter{equation}{0}

The notion of a model was first proposed by Gelfand for
compact Lie groups \cite{gz}.
It refers to a unitary representation on a Hilbert space
in which every irreducible representation occurs once.
It has been extended to non-compact semisimple Lie groups
\cite{jfa}, and here we construct models of
real forms $G$ of contragredient Lie supergroups.
A model of highest weight Harish-Chandra supermodules
is a unitary $G$-representation on a super Hilbert space
in which every irreducible highest weight Harish-Chandra supermodule
occurs once.
In this section, we construct such a model and prove Theorem \ref{thm5}.

Fix a cell $\si \subset \fa^*$, see (\ref{sel}).
Let $\si = \{\sum_j c_j \la_j \;;\; c_j > 0\}$ for some $\la_j \in \fa^*$.
In (\ref{bebas}), we define $\tf_\si \in C^\infty(\fa_\si)$ by
$\tf_\si (x) = \sum_j \exp (\la_j(x))$.
We have
\beq d \tf_\si : \fa_\si \lra \fa_\si^* \;\;,\;\;
d \tf_\si(x) = \sum_j \exp (\la_j(x)) \la_j ,\label{awat}
\eeq
and its Hessian matrix is the diagonal matrix with positive entries
$\exp (\la_j(x))$.
We identify $\tf_\si$ with $F_\si \in C^\infty(A_\si)$ by Remark \ref{ide}.
Then $F_\si$ is strictly convex, and by (\ref{awat}),
$\mbox{Im}(F_\si') =
\si \subset (\fa_\si^*)_{\rm reg}$.
By Theorem \ref{thm3}, $\om_\si = i \partial \bar{\partial}F_\si$
is a $G \times T_\si$-invariant pseudo-K\"ahler form
on $X_\si$.
Its moment map has image $\im(\Phi_\si) = \im(\frac{1}{2}F_\si') = \si$.
So by Theorem \ref{thm4},
\beq
W^2(\bl^\si) \cong \sum_{\la \in \si} \Theta_{\la + \rho} ,
\label{bane}
\eeq
summed over the integral weights $\la$.
We repeat this construction for each cell $\si$, and take the sum
over the collection $\{\si\}$ of all the cells. By (\ref{bane}),
\beq \sum_{\{\si\}} W^2(\bl^\si) =
\sum_{\{\si\}} \sum_{\la \in \si} \Theta_{\la + \rho} =
\sum_{\la \in C} \Theta_{\la + \rho} .
\label{saat}
\eeq
The irreducible highest weight Harish-Chandra supermodules
$\Theta_{\la + \rho}$ are parametrized by the integral weights
$\la \in C$ of (\ref{vgood}). So (\ref{saat}) is a
model for such supermodules.
This proves Theorem \ref{thm5}.


\newpage
\section{Symplectic reduction}
\setcounter{equation}{0}

In this section, we discuss symplectic reduction
and prove Theorems \ref{thm6} and \ref{thm7}.
Let $X_\si = G/G_{\rm ss}^\si \times A_\si$ be equipped with
a $G \times T_\si$-invariant
pseudo-K\"ahler form $\om = i \partial \bar{\partial}F$,
with moment map $\Phi_\si : X_\si \lra i\ft_\si^*$ of the right $T_\si$-action.
Let $\lat \in \mbox{Im}(\Phi_\si) \subset (i\ft_\si^*)_{\rm reg}$.
By Theorem \ref{thm3},
the Hessian matrix is nondegenerate,
so the gradient map $F'$ is a local diffeomorphism.
We have $\Phi_\si(ga) = \frac{1}{2} F'(a)$,
so the restriction of $\Phi_\si$ to $A_\si \subset X_\si$
is also a local diffeomorphism. Hence there exists a
discrete set $\Gamma \subset A_\si$ such that
\beq
\iimath : \Phi_\si^{-1}(\lat) = G/G_{\rm ss}^\si \times \Gamma
\hookrightarrow X_\si ,
\label{cbsp}
\eeq
where $\iimath$ is the inclusion.
There is a right $T_\si$-action on $\Phi_\si^{-1}(\lat)$ because $T_\si$
normalizes $G_{\rm ss}^\si$. This leads to the quotient map
\beq
 \pi : \Phi_\si^{-1}(\lat) \lra \Phi_\si^{-1}(\lat)/T_\si = G/G^\si \times \Gamma .
 \label{sjij}
 \eeq

In \cite[\S7]{jfa}, we show that if $G$ is an ordinary Lie group,
there exists a unique symplectic form $\om_{\lat}$ on
$G/G^\si \times \Gamma$ such that
\beq
\pi^* \om_{\lat} = \iimath^* \om .
\label{syre}
\eeq
Write $(X_\si)_{\lat} = G/G^\si \times \Gamma$.
The process $(X_\si, \om) \leadsto ((X_\si)_{\lat}, \om_{\lat})$
is called symplectic reduction \cite{mw}.
We shall show that (\ref{syre}) holds in our super setting,
then compute $\om_\la$ and show that it is pseudo-K\"ahler.

\sk

\noindent {\it Proof of Theorem \ref{thm6}:}

Let $u \in \ft$ and $\xi \in \ft^* \subset \fg^*$.
Here $\ft^* \subset \fg^*$ is given by annihilation of the root spaces.
We claim that
\beq
\mbox{(a)} \, \imath(u) (d \xi) = 0
\;\;,\;\; \mbox{(b)} \, \ad_u^* (d \xi) =0 .
\label{harap}
\eeq
In (\ref{harap}), we regard
$d \xi \in \we^2 \fg^*$ and $\ad_u^* \in {\rm End}(\we^n \fg^*)$
(not $\we^2 \ft^*$ and ${\rm End}(\we^n \ft^*)$).
Recall from (\ref{haba}) that $\fg = \ft + V$,
where $V$ is the sum of root spaces.
Given $x+y \in \ft + V$, we have
$(d \xi)(u, x+y) = \xi([u,x] + [u,y]) = 0$ because $[u,x] = 0$ and $[u,y] \in V$.
Hence $\imath(u) (d \xi) =0$, which proves (\ref{harap})(a).

 To prove (\ref{harap})(b), let $x,y \in \fg$ and we have
\beq \begin{array}{l}
 (\ad_u^*(d \xi))(x,y)  = \xi ([[u,x],y] + [x,[u,y]])
= \xi ([u,[x,y]]) \\
= (d \xi)(u,[x,y]) = (\imath(u)(d \xi))([x,y]) =0 . \label{ahir}
\end{array}
\eeq
The last expression of (\ref{ahir}) vanishes because
$\imath(u)(d \xi) =0$ by (\ref{harap})(a). This proves (\ref{harap})(b).

By Remark \ref{ide}, we identify $F$ with $\tf \in C^\infty(\fa_\si)$.
Recall from (\ref{shiq}) that $\om = \om|_{\fh_\si} + \om|_{V_\si}$, where
\beq
\om|_{\fh_\si} = \frac{1}{2}
\sum_{i,k} \frac{\partial^2 \tf}{\partial x_i \partial x_k} dx_j\we \xi_k
\;,\;
\om|_{V_\si} = \frac{1}{2}  \sum_k \frac{\partial \tf}{\partial x_k} d \xi_k .
\label{faww}
\eeq
Here $x_j$ are the linear coordinates on $\fa_\si$, and $\xi_k \in \ft_\si^*$.
The expression $dx_j\we \xi_k$ means that $\ft_\si$ and $\fa_\si$ are
Lagrangian subspaces of $\om|_{\fh_\si}$.
The image of $\iimath$ of (\ref{cbsp}) intersects the submanifold $A_\si \subset X_\si$
on the discrete set $\Gamma$, so $\iimath^* (\om|_{\fh_\si}) = 0$.
It follows that
\beq
 \iimath^* \om = \frac{1}{2} \sum_k \frac{\partial \tf}{\partial x_k} d \xi_k .
 \label{tati}
 \eeq
By (\ref{harap}) and (\ref{tati}),
\beq \imath(u) (\iimath^* \om) = \ad_u^* (\iimath^* \om) =0 \;\; \mbox{ for all } u \in \ft .
\label{kkc}
\eeq

Since $\iimath$ is $G$-equivariant, $\iimath^* \om$ is $G$-invariant.
Hence $\iimath^* \om \in \Om^2(G/G_{\rm ss}^\si)^G = \we^2(\fg, \fg_{\rm ss}^\si)^*$
(see definition in (\ref{motiva})). This, together with (\ref{kkc}), imply that
$\iimath^* \om \in \we^2(\fg, \fg^\si)^*$. So there exists
$\om_{\lat} \in \Om^2(G/G^\si)^G$ such that
$\pi^* \om_{\lat} = \iimath^* \om$, where $\pi$ is the map (\ref{sjij}).
Here $\om_\la$ is unique because $\pi^*$ is injective.
This proves (\ref{syre}).

Next we compute $\om_{\lat}$.
By (\ref{syre}) and (\ref{tati}),
$\om_{\lat} =
\frac{1}{2} \sum_k \frac{\partial \tf}{\partial x_k}(a) d \xi_k$.
By Theorem \ref{thm3}, $\lat = \Phi_\si(a) = \frac{i}{2} \sum_k \frac{\pa \tf}{\pa x_k}(a) \xi_k$, so
\beq
 -i d \lat = \frac{1}{2} \sum_k \frac{\pa \tf}{\pa x_k}(a) d \xi_k = \om_{\lat} .
 \label{baz}
 \eeq

Finally we show that $\om_\la$ is pseudo-K\"ahler.
The supermanifolds $(G/G_{\rm ss}^\si)A_\si$ and $G/G^\si$
acquire their complex structures as
open subsets of $L/(P,P)$ and $L/P$ respectively.
The quotient map $L/(P,P) \lra L/P$ leads to a holomorphic fibration
\[ \rho : G/G_{\rm ss}^\si \times A_\si \lra G/G^\si .\]
By (\ref{faww}) and (\ref{baz}),
$\rho^* \om_\la = \om|_{V_\si}$.
Since $\om$ is a $(1,1)$-form and $\om(\fh_\si,V_\si) = 0$, it follows that
$\om|_{V_\si}$ is also a $(1,1)$-form, and so is $\om_\la$.
Since $\om|_{V_\si}$ is nondegenerate, so is $\om_\la$.
Hence $\om_\la$ is pseudo-K\"ahler.
This proves Theorem \ref{thm6}.$\hfill$$\Box$

\sk

Guillemin and Sternberg \cite{gs2} propose that
symplectic reduction is the geometric analogue of taking subrepresentation.
This is known as ``quantization commutes with reduction''.
We now prove Theorem \ref{thm7}, which shows that our setting fulfills this principle.

\sk

\noindent {\it Proof of Theorem \ref{thm7}:}

By (\ref{gunak}), $G^\si$ is a unimodular Lie group (see Definition \ref{whted}).
Since $G_\0$ is reductive \cite[\S2]{ka}, by Example \ref{unim}, it is also unimodular.
So $G_\0/G^\si$ has a
$G_\0$-invariant measure $\mu_{G_\0/G^\si}$ \cite[Prop.8.36]{kn}.
We recall Harish-Chandra's construction of the discrete series representations.
Fix an integral weight $\la \in \si$. It determines a homogeneous line bundle
$\bl_\0^\la = G_\0 \times_\la \bc$ over $G_\0/G^\si$. There exists a Hermitian structure
$\langle \cdot , \cdot \rangle$ on the holomorphic sections $\ch(\bl_\0^\la)$,
and we define
\[ \ch^2(\bl_\0^\la) = \{s \in \ch(\bl_\0^\la) \;;\;
\int_{G_\0/G^\si} \langle s , s \rangle \, \mu_{G_\0/G^\si} < \infty\} .\]
Then $\ch^2(\bl_\0^\la)$ is irreducible, and is
the discrete series representation with Harish-Chandra parameter $\la + \rho$ \cite{hc-vi}.

We extend Harish-Chandra's construction to the super setting.
Since $G$ is a Lie supergroup and $G^\si$ is an ordinary Lie group,
$G/G^\si$ is globally split, hence
$\cH(G/G^\si)\cong \cH(G_\0/G^\si) \otimes \wedge(\xi)$. If
we fix such an isomorphism,
the line bundle $\bl_\0^\la$ on $G_\0/G^\si$ extends to the $1|0$ vector bundle
$\bl^\la$ on $G/G^\si$ (see \cite[Ch.4]{ccf}, \cite[Ch.6]{kessler}),
and we can view
$\cH(\bl_\0^\la)$ as a subalgebra of $\cH(\bl^\la)$,
the holomorphic sections on $\bl^\la$.
We extend the Hermitian structure to $\cH(\bl^\la)$ by
$\langle s \xi_1 , t \xi_2 \rangle =
\langle s , t  \rangle \xi_1 \bar{\xi}_2$.

Let $W(\bl^\la)$ be the irreducible $G$-submodule of $\ch(\bl^\la)$
which contains the highest weight vector.
Since $G^\si$ is unimodular, we can apply Proposition \ref{ffub} with $U = G^\si$,
namely there exists a $G$-invariant measure of $G/G^\si$ given by
\[ \mu_{G/G^\si} = \mu_{G_\0/G^\si} \mu_{G_\1}
= \mu_{G_\0/G^\si} (\sum_{K \in S} a_K \xi^K \bar{\xi}^K) ,\]
where $S$ is a subset of the power set of $\{1,..., \dim \fg_\1\}$,
and $a_K \neq 0$ for all $K \in S$.
We then define
\beq
W^2(\bl^\la) = \{s \in W(\bl^\la) \;;\;
\int_{G/G^\si} \langle s,s \rangle \, \mu_{G/G^\si} \mbox{ converges}\} .
\label{angi}
\eeq
Let $s = \sum_I s_I \xi^I \in W(\bl^\la)$. Similar to (\ref{attg}), we have
\[ \int_{G/G^\si} \langle s,s \rangle \, \mu_{G/G^\si}
= \sum_{I \in S^c} c_I \int_{G_\0/G^\si} \langle s_I, s_I \rangle \, \mu_{G_\0/G^\si} ,\]
where $c_I \neq 0$ for all $I \in S^c$.
Thus $s \in W^2(\bl^\la)$ if and only if $s_I \in \ch^2(\bl_\0^\la)$ for all $I \in S^c$.
Here $\ch^2(\bl_\0^\la)$ is a discrete series representation \cite{hc-vi},
so $W^2(\bl^\la)$ is a non-trivial vector space.
The Hermitian form (\ref{angi}) is $G$-invariant, so
$W^2(\bl^\la)$ is a $G$-subrepresentation of $W(\bl^\la)$.
But by \cite[Thm.4.2.7]{cfv},
$W(\bl^\la)$ is irreducible and is given by $\Theta_{\la + \rho}$.
Hence as algebraic $G$-representations,
\beq W^2(\bl^\la) \cong \Theta_{\la + \rho} .
\label{bilo}
\eeq
Similar to the arguments of Proposition \ref{nchu},
$W^2(\bl^\la)$ is a unitary $G$-representation with respect to (\ref{angi}).
The holomorphic sections form a closed subspace within the
square integrable sections,
so $W^2(\bl^\la)$ is complete.

By Theorem \ref{thm6}, the symplectic quotient is
$(G/G^\si \times \Gamma , -id \la)$.
If $F$ is strictly convex, then $F'$ is injective,
so there exists a unique $a \in A_\si$ such that
$\frac{1}{2}F'(a) = \lat$,
namely $\Gamma = \{a\}$.
We write the
symplectic quotient as
\[ ((X_\si)_\la, \om_\la) =
(G/G^\si , -id \la) .\]
We perform geometric quantization to it.
By (\ref{angi}) and (\ref{bilo}), we write
\beq \QQ((X_\si)_{\lat} , \om_{\lat}) = W^2(\bl^\la) \cong \Theta_{\la + \rho} ,
\label{jadi}
\eeq
where $\QQ$ denotes geometric quantization.
 By Theorem \ref{thm4},
 $\QQ(X_\si, \om)_{\lat} = W^2(\bl^\si)_{\lat} \cong \Theta_{\la + \rho}$.
So together with (\ref{jadi}), we have
\[  \QQ((X_\si)_{\lat} , \om_{\lat}) \cong \QQ(X_\si, \om)_{\lat} .\]
This proves Theorem \ref{thm7}.$\hfill$$\Box$


\newpage

\end{document}